\documentclass[leqno,11pt,a4paper]{amsart}
\usepackage[top=28truemm,bottom=28truemm,left=38truemm,right=38truemm]{geometry}
\usepackage{fix-cm}
\usepackage[TU]{fontenc}
\makeatletter
\renewcommand\section{\@startsection{section}{1}{\z@}%
           {25\p@ \@plus 6\p@ \@minus 3\p@}%
           {10\p@ \@plus 6\p@ \@minus 3\p@}%
           {\fontsize{13pt}{0cm}\selectfont\bfseries\boldmath}}
           
\renewcommand\subsection{\@startsection{subsection}{2}{\z@}%
           {13\p@ \@plus 6\p@ \@minus 3\p@}%
           {6\p@ \@plus 6\p@ \@minus 3\p@}%
           {\fontsize{12pt}{0cm}\bfseries\boldmath}}
\renewcommand\subsubsection{\@startsection{subsubsection}{3}{\z@}%
           {12\p@ \@plus 6\p@ \@minus 3\p@}%
           {\p@}%
           {\normalfont\normalsize}}
\renewcommand{\paragraph}[1]{%
  \par
  \addvspace{\medskipamount}
  \noindent
  \textbf{#1\@addpunct{.}}\enspace\ignorespaces
}
\makeatother
\let\oldtocsection=\tocsection
\let\oldtocsubsection=\tocsubsection
\let\oldtocsubsubsection=\tocsubsubsection
\renewcommand{\tocsection}[2]{
\hspace{0em}\bfseries\oldtocsection{#1}{#2}}
\renewcommand{\tocsubsection}[2]{\hspace{1em}\small\oldtocsubsection{#1}{#2}}
\renewcommand{\tocsubsubsection}[2]{\hspace{2em}\small\oldtocsubsubsection{#1}{#2}}
\usepackage[x11names,dvipsnames]{xcolor}
\usepackage{url}
\usepackage[doi=false,isbn=false, sortcites=true,eprint=true, date=year, style=numeric,uniquename=init,giveninits=true,sorting=nty,url=true,maxnames=99,backref=false]{biblatex} 
\usepackage[hypertexnames=false,linktoc=page]{hyperref}
\hypersetup{
    colorlinks=true,
    citecolor=blue,
    linkcolor=blue,
    urlcolor=teal,
    breaklinks=true
}
\usepackage{amsfonts}
\usepackage[capitalize,noabbrev,nameinlink]{cleveref}

\usepackage{amsthm}
\usepackage{thmtools}

\numberwithin{equation}{section}

\theoremstyle{plain}
\newtheorem{theorem}{Theorem}[section]
\newtheorem{proposition}[theorem]{Proposition}
\newtheorem{lemma}[theorem]{Lemma}
\newtheorem{corollary}[theorem]{Corollary}

\theoremstyle{definition}
\newtheorem{definition}[theorem]{Definition}

\newtheorem{assumption}[theorem]{Assumption}

\theoremstyle{remark}
\newtheorem{remark}[theorem]{Remark}
\newtheorem{example}[theorem]{Example}

\usepackage[inline]{enumitem}
\setlist[description]{%
  topsep=10pt,               
  itemsep=5pt,               
  font={\bfseries\rmfamily}, 
}

\setlist[itemize]{
  left=0pt,
  itemsep=5pt,
}
\setlist[enumerate]{
    left=0pt,
    itemsep=5pt,
}

\newlist{myenum}{enumerate}{3}
\setlist[myenum,1]{label=\textbf{\arabic*.},
                   ref  =\textbf{\arabic*.}}
\setlist[myenum,2]{label=\textbf{(\alph*)},
                   ref  =\themyenumi\textbf{(\alph*)}}
\setlist[myenum,3]{label=\bfseries(\roman*),
                   ref  =\themyenumii\textbf{.(\roman*)}}

\usepackage{mathtools}
\usepackage{amssymb,amsmath}
\usepackage{mathrsfs}
\usepackage{appendix}

\crefname{theorem}{Theorem}{Theorems}
\crefname{definition}{Definition}{Definitions}
\crefname{problem}{Problem}{Problems}
\crefname{fact}{Fact}{Facts}
\crefname{proposition}{Proposition}{Propositions}
\crefname{lemma}{Lemma}{Lemmas}
\crefname{corolary}{Corolary}{Corolaries}
\crefname{conjecture}{Conjecture}{Conjectures}
\crefname{assumption}{Assumption}{Assumptions}
\crefname{enumi}{Assumption}{Assumptions}
\crefname{claim}{Claim}{Claims}

\crefname{remark}{Remark}{Remarks}
\crefname{example}{Example}{Examples}
\crefname{corollary}{Corollary}{Corollaries}
\crefname{subsubsection}{Subsubsection}{Subsubsections}
\crefname{subsection}{Subsection}{Subsections}
\crefname{section}{Section}{Sections}
\crefname{chapter}{Chapter}{Chapters}
\crefname{table}{Table}{Tables}
\crefname{figure}{Figure}{Figures}

\crefname{myenumi}{item}{items}
\crefname{myenumii}{item}{items}
\crefname{myenumiii}{item}{items}
\DeclareMathOperator*{\argmin}{\mathop{\rm arg~min}}
\DeclareMathOperator{\sgn}{\operatorname{sgn}}

\newcommand{\N}{\mathbb{N}}
\newcommand{\R}{\mathbb R}
\newcommand{\dd}{\mathrm{d}}

\newcommand{\lrbra}[1]{\left(#1\right)}

\newcommand{\eps}{\varepsilon}
\newcommand{\e}{\mathrm{e}}

\newcommand{\domain}[1]{\mathsf{D}(#1)}

\newcommand{\inner}[2]{\left\langle #1, #2 \right\rangle}

\newcommand{\X}{\mathcal{X}}
\newcommand{\dist}{\mathsf{d}}

\newcommand{\m}{\mathsf{m}}
\newcommand{\Leb}{\mathscr{L}}

\newcommand{\Lp}{L^p(X, \m)}

\newcommand{\vdual}{V^\ast}
\newcommand{\normslope}[1]{\norm{\partial_F^\circ #1}}

\newcommand{\norm}[1]{\left\lVert #1 \right\rVert}
\newcommand{\lpnorm}[1]{\lVert #1 \rVert_{L^p}}
\newcommand{\lqnorm}[1]{\left\lVert #1 \right\rVert_{L^q}}
\newcommand{\lonenorm}[1]{\lVert #1 \rVert_{L^1}}

\newcommand{\slope}[1]{\abs{\partial #1}}

\newcommand{\F}{\mathcal{F}}

\newcommand{\tconsume}[1]{\mathsf{t}_{#1}}
\newcommand{\tmin}{\tconsume{\mathrm{min}}}

\newcommand{\relaxedslope}[1]{\left|\mathrm{D} #1 \right|_\ast}

\newcommand{\ch}[1]{\mathsf{Ch}_{#1}}
\newcommand{\chp}{\ch{p}}

\newcommand{\sobolevnorm}[1]{\|#1\|_{H^{1,p}(X)}}
\newcommand{\sobolev}{H^{1,p}(X)}

\newcommand{\plap}{\Delta_p}

\newcommand{\sphere}{\mathrm{S}}
\newcommand{\ball}{\mathrm{B}}

\newcommand{\bregman}[2]{D_\Psi \bigl(#1, #2\bigr)}
\newcommand{\resolvent}[1]{J^\tau[#1]}
\newcommand{\xbar}[2]{\bar{x}^{#1}_{#2}}

\newcommand{\Psiast}{\Psi^\ast}

\newcommand{\bregmanast}[2]{D_{\Psiast}\bigl(#1, #2\bigr)}
\newcommand{\gradpsi}{\nabla\Psi}

\newcommand{\xtau}{x^\tau}
\newcommand{\xtaun}{\xtau_n}
\newcommand{\xtaunminusone}{\xtau_{n-1}}
\newcommand{\xbartau}[1]{\bar{x}^\tau_{#1}}

\newcommand{\abs}[1]{\left\lvert #1 \right\rvert}
\newcommand{\dt}{\frac{\dd}{\dd t}}

\usepackage{comment}

\DeclareRobustCommand{\SkipTocEntry}[5]{}

\begin{document}
\title{Mirror flows and $p$-Laplacian eigenvalue problems on metric measure spaces}

\author[S.~Shimoyama]{Sho Shimoyama} 
\subjclass[2020]{
Primary 47J10;
Secondary 47J35, 49J35, 49J52, 35B40
}
\renewcommand{\thefootnote}{\fnsymbol{footnote}}

\keywords{
$p$-Laplacian eigenvalue problem;
metric measure space;
mirror flow;
large time behavior
}

\address{
Addresses of Sho Shimoyama
\endgraf
Graduate School of Mathematical Sciences,
The University of Tokyo,
Meguro-ku Komaba 153-8914,
Japan
}
\email{\href{mailto:sho-shimoyama@g.ecc.u-tokyo.ac.jp}{sho-shimoyama@g.ecc.u-tokyo.ac.jp}}

\begin{abstract}
We study mirror flows as Banach-space counterparts of Hilbertian gradient flows and show that they play an essential role in $p$-Laplacian eigenvalue problems on metric measure spaces.
Under a Rellich--Kondrachov type compactness assumption and for $p\ge2$, we establish a Ljusternik--Schnirelman type existence theorem without assuming $C^1$-regularity of the associated energy. 
More precisely, we prove that every element $\lambda$ of the Krasnoselskii spectrum is an eigenvalue of the $p$-Laplacian $\Delta_p$; that is, there exists a nontrivial solution $f$ to $\Delta_p f=-\lambda |f|^{p-2}f$.
We also investigate the large time behavior of the corresponding mirror flow and prove its convergence to an eigenfunction when the eigenvalue is simple and isolated.
\end{abstract}

\maketitle

\tableofcontents

\section{Introduction}
The well-posedness of gradient flows for convex functionals on Hilbert spaces, including uniqueness and continuous dependence on initial data, is a fundamental property underlying many applications in optimization, analysis and geometry.
In non-Hilbertian Banach spaces, however, these properties remain open, which limits the applicability of gradient flows; for related topic see~\cite[p.~4]{AGS} and~\cite{Ohta2023ContractionProperty}.
This paper proposes viewing mirror flows as a natural Banach-space analogue of Hilbertian gradient flows; the results for $p$-Laplacian eigenvalue problems on metric measure spaces provide evidence in favor of this viewpoint.

Roughly speaking, mirror flows are dual counterparts of standard gradient flows.
Let $(V, \norm{\cdot})$ be a Banach space, $\Phi \colon V \to (-\infty, +\infty]$ be a proper lower semicontinuous functional, and let $\Psi \colon V \to \R$ be a $C^1$-convex functional.
We assume that $\Phi$ is $\lambda$-convex for $\Psi$ and some $\lambda \in \R$, which means that $\Phi - \lambda\Psi$ is convex.
A $\Psi$-mirror flow $x_t \colon [0, +\infty) \to V$ for $\Phi$ is a curve satisfying
\[
    \dt\nabla\Psi(x_t) \in - \partial_F \Phi(x_t),\ \Leb\text{-a.e. } t > 0,
\]
where $\nabla\Psi$ is the Fr\'echet differential of $\Psi$, $\partial_F \Phi$ is the Fr\'echet subdifferential of $\Phi$ and $\Leb$ is the $1$-dimensional Lebesgue measure.
For comparison with gradient flows, recall the definition of $\Psi$-gradient flow: a $\Psi$-gradient flow $u_t \colon [0, +\infty) \to V$ for $\Phi$ is a curve with
\[
    \nabla\Psi\left(\dt u_t \right) \in - \partial_F \Phi(u_t),\ \Leb\text{-a.e. } t > 0.
\]
The term involving $\nabla\Psi$ induces a strong nonlinearity, which makes the analysis of $\Psi$-gradient flows delicate.
Indeed, even in Hilbert spaces, for $\Psi(\cdot)=\frac{1}{p}\norm{\cdot}^p$ with $p\ne2$, the uniqueness of the flow was pointed out in~\cite[p.~208]{RossiSegattiStefanelli2011GlobalAttractors} as an open problem, and was recently settled affirmatively in~\cite{Shimoyama2025} via a parameter transformation, a nonstandard technique in the analysis of gradient flows.
The important difference between $\Psi$-mirror flows and $\Psi$-gradient flows is the order of the time derivative $\dt$ and the nonlinear map $\nabla\Psi$.
Thanks to this difference, $\Psi$-mirror flows are compatible with the chain rule.
For instance, if $\Psi(\cdot) = \frac{1}{2}\norm{\cdot}^2$, $\Phi$ is convex, i.e. $0$-convex for $\Psi$, and $\Phi$ has a minimizer $x^\ast = 0 \in V$, then for $\Leb$-a.e. $t >0 $ we have
\[
    \dt \frac{1}{2}\norm{x_t - x^\ast}_V^2
    = \dt \frac{1}{2}\norm{\nabla\Psi(x_t - x^\ast)}_{V^\ast}^2
    = \inner{\dt \nabla\Psi(x_t)}{x_t - x^\ast} \le \Phi(x^\ast) - \Phi(x_t) \le 0.
\]
Thus, the distance between $x_t$ and a minimizer $x^\ast$ is nonincreasing even if $V$ is not a Hilbert space.
In applied mathematics, particularly in optimization and control, $\Psi$-mirror flows have been widely studied and the importance of the above compatibility property was already recognized by Nemirovsky and Yudi in the 1980s~\cite{NemirovskyYudin1983ProblemComplexity}.
This study shows that $\Psi$-mirror flows are also useful for $p$-Laplacian eigenvalue problems on metric measure spaces.

Before treating general metric measure spaces, let us first recall the results in Euclidean spaces. 
Let $\Omega \subset \R^n$ be a bounded domain with smooth boundary, endowed with the Euclidean distance and the Lebesgue measure.
For $p \in (1, +\infty)$, denoted by $q$ the H\"older conjugate of $p$, the $p$-Laplace operator $\Delta_p \colon L^p(\Omega) \to L^q(\Omega)$ is defined by $\Delta_p f \coloneqq \partial \mathcal{E}_p(f)$ for
\[
 \mathcal{E}_p(f) \coloneqq \begin{cases}
     \frac{1}{p} \int_\Omega |\nabla f|^p \dd x & \text{if } f \in W^{1,p}(\Omega)\\
     +\infty & \text{otherwise}.
 \end{cases}
\]
As shown in~\cite{LE20061057}, a nondecreasing sequence of eigenvalues $\{\lambda_{k, p}\}_{k \ge 1}$ of $\Delta_p$ is given by the min-max values 
\begin{equation}\label{eq:minmax_in_intro}
    \lambda_{k, p} \coloneqq \inf_{A \in \F_k} \sup_{A} p\mathcal{E}_p.
\end{equation}
Here $\F_k$ consists of all closed symmetric subsets of $\sphere \coloneqq \{f \in L^p(\Omega) \mid \|f\|_{L^p} = 1\}$ having a topological dimension at least $k$ in the sense of Krasnoselskii.
This min--max construction is an instance of the Ljusternik--Schnirelmann theory, a standard variational framework for producing nonlinear eigenvalues.
In this framework, a key role is played by a \textit{deformation map}.
The existence of a deformation map is usually based on the $C^1$-regularity of the associated functional: see for instance~\cite{Zeidler1985,struwe_variational}.

The past three decades have seen progress in analysis and geometry on metric measure spaces which have become an active area of research.
In this article, by a metric measure space $(X, \dist, \m)$ we mean a metric space equipped with a finite Borel measure $\m$, i.e., $\m(X) < +\infty$.
The functional $\mathcal{E}_p$ is generalized in this setting by $p$-Cheeger energy $\chp \colon \Lp \to [0,+\infty]$.
However, since $\chp$ is not $C^1$ on its domain in general, we cannot use the same method as in the Euclidean setting to construct the deformation map.
In~\cite{AmbrosioHondaPortegies2018Continuity}, Ambrosio--Honda--Portegies construct a deformation map using gradient flows for the case $p=2$ and show that min-max values $\{\lambda_{2, k}\}_{k \ge 1}$ for $\ch{2}$ are eigenvalues of $2$-Laplacian under the assumption that the domain $H^{1,2}(X)$ of $\ch{2}$ is compactly embedded in $L^2(X, \m)$.
The well-posedness property of gradient flows in Hilbert spaces is important in their proof; however, this property are not known in $\Lp$ for $p \neq 2$.
The authors state that it is open whether the min-max values $\{\lambda_{k, p}\}_{k\ge 1}$ are eigenvalues of the $p$-Laplacian.

In this study, we extend their result to the case $p \ge 2$.
More precisely, we prove:
\begin{theorem}\label{thm:min_max_eigenvalue_introduction}
    Denoted by $\sphere$ the unit sphere in $\Lp$, namely, $\sphere \coloneqq \{ f \in \Lp \mid \lpnorm{f} = 1\}$.
    Let $p \ge 2$ and $\sobolev$ be the domain of $p$-Cheeger energy $\chp$. Assume that $\sobolev$ is compactly embedded in $\Lp$.
    Then for each $k \in \N$ the min-max value
    \[
        \lambda_{k, p} \coloneqq \inf_{A \in \F_k} \sup_A p\chp
    \]
    is an eigenvalue of the $p$-Laplacian if it is finite, where $\F_k$ is the set of all closed symmetric subsets $A \subset \sphere$ with $\gamma(A) \ge k$ and $\gamma$ is the Krasnoselskii genus.
    Moreover, a multiplicity result holds: for the precise statement, see \cref{thm:min_max_are_eigenvalues}.
\end{theorem}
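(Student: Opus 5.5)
The plan is to run the Ljusternik--Schnirelmann argument, with the deformation map built from a mirror flow instead of a Hilbertian gradient flow. Take $\Psi(f) = \frac{1}{p}\lpnorm{f}^p$ on $V = \Lp$. Then $\nabla\Psi(f) = |f|^{p-2}f$ is an odd homeomorphism $\Lp \to L^q(X,\m)$. For $\Phi = \chp$, which is convex, lower semicontinuous and $p$-homogeneous, I would consider the $\Psi$-mirror flow
\[
\dt \nabla\Psi(f_t) \in -\partial \chp(f_t) + \lambda(f_t)\,|f_t|^{p-2}f_t,
\]
normalized by a Rayleigh-quotient multiplier. Equivalently, I would study the flow for $\Phi = \chp$ and rescale it to $\sphere$ using homogeneity. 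Eigenfunctions are then exactly the stationary points: $\Delta_p f = -\lambda |f|^{p-2}f$ with $\lambda = p\chp(f)$ on $\sphere$.

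\textbf{Step 1: well-posedness and continuity.} I would construct the mirror flow by a mirror-descent minimizing-movement scheme,
\[
f_n \in \argmin_g \Bigl\{\chp(g) + \tfrac{1}{\tau} \bregman{g}{f_{n-1}}\Bigr\},
\]
and pass to the limit. Convexity of $\chp$ together with the chain-rule compatibility from the introduction gives the Bregman-distance estimate
\[
\dt \bregman{g}{f_t} \le \chp(g) - \chp(f_t).
\]
Applied to two flows, this should give uniqueness and continuous dependence on the initial datum in $\Lp$. Here $p \ge 2$ enters: the Bregman divergence of $\frac{1}{p}\lpnorm{\cdot}^p$ controls $\lpnorm{f-g}^p$ from below and is locally Lipschitz-bounded from above, which turns the estimate into genuine $L^p$-stability. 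The flow also commutes with $f \mapsto -f$ by oddness, it is energy-dissipating, and it satisfies
\[
\int_0^T \norm{\dt\nabla\Psi(f_t)}_{L^q}\, \dd t < \infty.
\]

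\textbf{Step 2: deformation lemma.} Suppose $\lambda = \lambda_{k,p} < \infty$ is not an eigenvalue. Then on the set $K_\lambda$ of critical points at level $\lambda$ there is nothing. I would then show a uniform slope bound: for some $\eps, \delta > 0$, every $f \in \sphere$ with $|p\chp(f) - \lambda| < \eps$ has minimal-norm subgradient of the constrained functional at least $\delta$. The argument is by contradiction and uses compactness. A sequence with vanishing slopes has bounded energy, so by the Rellich--Kondrachov assumption it is precompact in $\Lp$. The limit lies on $\sphere$ and, by closedness of the graph of $\partial\chp$ (strong--weak), it is an eigenfunction at level $\lambda$. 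This is a contradiction.

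Energy dissipation along the normalized flow gives
\[
\dt p\chp(f_t) \le -c\,\delta^{q}
\]
on that strip. So running the flow for a fixed time $T$ maps $\{p\chp \le \lambda + \eps\}$ into $\{p\chp \le \lambda - \eps\}$. By Step 1, the time-$T$ map $\eta$ is continuous and odd on $\sphere$. Choose $A \in \F_k$ with $\sup_A p\chp \le \lambda + \eps$. Then $\eta(A)$ is closed (after closure), symmetric, and satisfies $\gamma(\eta(A)) \ge \gamma(A) \ge k$ by monotonicity of the genus under odd continuous maps. But $\sup_{\eta(A)} p\chp \le \lambda - \eps$, contradicting the definition of $\lambda_{k,p}$. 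For the multiplicity statement, I would use the standard refinement: if $\lambda_{k,p} = \dots = \lambda_{k+m,p}$, deform away from a symmetric neighbourhood of $K_\lambda$ and use subadditivity of $\gamma$ to obtain $\gamma(K_\lambda) \ge m+1$.

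\textbf{Main obstacle.} I expect the hardest part to be Step 1, specifically continuous dependence of the time-$T$ map on initial data in the strong $L^p$ topology, uniformly on the sublevel set. The obstacle is that $\chp$ is not $C^1$ and $\Lp$ is not Hilbert. My first attempt would be a Gronwall argument on $\bregman{f_t}{g_t}$, using $p \ge 2$ to compare it with $\lpnorm{f_t - g_t}^p$. The sphere normalization introduces a Lagrange multiplier term; I would handle it by running the unconstrained $p$-homogeneous flow and then projecting radially, which commutes by homogeneity. The second concern is that a closed genus-$k$ set must be mapped to a closed set; this follows from continuity of $\eta$ and compactness of sublevel sets.
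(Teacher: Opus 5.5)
Your overall architecture matches the paper's: a mirror flow normalized to $\sphere$, Palais--Smale-type compactness from Rellich--Kondrachov, an odd continuous deformation, and monotonicity and subadditivity of $\gamma$. Step~1, however, has a genuine gap. The EVI $\dt \bregman{g}{f_t} \le \chp(g) - \chp(f_t)$ does not give uniqueness or $L^p$-stability when ``applied to two flows'', because $D_\Psi$ is not symmetric.

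The EVI controls $\bregman{x}{f_t}$ only for a \emph{fixed} test point $x$, and only through the second slot, where $\dt \bregman{x}{f_t} = \inner{\dt J_p(f_t)}{f_t - x}$. To compare two flows you must differentiate $\bregman{g_t}{f_t}$. Its first-slot contribution is $\inner{J_p(g_t) - J_p(f_t)}{\dot g_t}$, which involves the primal velocity $\dot g_t$. A mirror flow only provides $\dt J_p(g_t)$, and the sign of this term is uncontrolled. Passing to the dual, $\bregman{g_t}{f_t} = \bregmanast{J_p(f_t)}{J_p(g_t)}$, only moves the problem to the second slot of $D_{\Psiast}$. There the Hessian of $\frac{1}{q}\lqnorm{\cdot}^q$ appears, which is singular for $q<2$. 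The symmetrized quantity $\inner{J_p(f_t) - J_p(g_t)}{f_t - g_t}$ has the same defect.

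So the Hilbertian doubling-of-variables contraction does not carry over. Your comparison of $D_\Psi$ with $\lpnorm{f-g}^p$ for $p \ge 2$ does not help, because there is no differential inequality to feed into Gr\"onwall. Without uniqueness there is no well-defined time-$T$ map, let alone a continuous odd one, so Step~2 collapses.

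The missing idea is that uniqueness comes from an $L^1$-contraction in the dual variable. This rests on a structural property of the Cheeger energy, not on convexity alone. For mirror flows of $\chp - L\Psi$ one has $\lonenorm{J_p(f_t) - J_p(g_t)} \le \e^{L(t-s)}\lonenorm{J_p(f_s) - J_p(g_s)}$ (\cref{thm:unique_existence_un_normalized}). The proof pairs with $\operatorname{sgn}(J_p(f_t) - J_p(g_t)) = \operatorname{sgn}(f_t - g_t)$ and applies the order-preserving inequality (iv) of \cref{prop:properties_of_chp} with a mollified sign function. Your radial-projection idea is compatible with this: the paper likewise transfers uniqueness from the unnormalized flow to the normalized one by a scalar rescaling.

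Continuous dependence in $\Lp$ is then not obtained quantitatively at all. It follows from uniqueness combined with compactness of families of flows (\cref{lem:compactness_mirror_flows}). That compactness uses equi-absolute continuity of $t \mapsto J_p(f_t)$ from energy dissipation, Rellich--Kondrachov, Ascoli--Arzel\`a, and stability of the exponential EVI under pointwise limits.

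Two smaller points:
\begin{itemize}
\item The dissipation rate in Step~2 cannot come from a chain rule, since $f_t$ need not be differentiable and $\chp$ is not $C^1$. The paper obtains $-\dt \Phi(f_t) \ge C_p \slope{\Phi}^2(f_t)$ by testing the EVI at $x = f_{t-h}$ and bounding $D_{\Psiast}$ below by the squared dual norm (\cref{thm:energy_dissipation}).
\item The ``standard refinement'' for multiplicity relies on a bounded pseudo-gradient to bound from below the time needed to travel from $\overline{U}$ to $N^c$. No such speed bound is available here; the paper again uses compactness of flows to show $\tmin(U, N) > 0$.
\end{itemize}
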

The key part of the proof is the construction of a deformation map without assuming $C^1$-regularity of the associated energy.
This is established using $\Psi$-mirror flows for $\Psi(\cdot) \coloneqq \frac{1}{p}\lpnorm{\cdot}^p$ and a functional $\Phi$ on $\Lp$ which, for an appropriate choice of $M > 0$, is defined by 
\[
    \Phi(f) \coloneqq \begin{cases}
        \chp(f) - (M+1)\Psi(f) & \text{ if } \lpnorm{f} \le1 \text{ and } \chp(f) \le \frac{M}{p}\\
        +\infty & \text{otherwise}
    \end{cases}.
\]
In order to construct the deformation map, we prove the well-posedness of $\Psi$-mirror flows for this functional $\Phi$ in \cref{cor:unique_existence_mirror_flow}.
We remark that the validity of the analogous result for $\Psi$-gradient flows remains unclear and difficult to verify.

As another application, we study the large time behavior of $\Psi$-mirror flows for the above $\Phi$ and $\Psi$.
In~\cite{HYND20174873}, Hynd and Lindgren show that $\Psi$-gradient flows converge to a minimizer of the Rayleigh quotient for several functionals on $L^p(\Omega)$,  including $\mathcal{E}_p$, when the minimizer is simple and isolated.
Inspired by their work, we prove the following result, where we define
\[
    \widetilde{\lambda}_{1, p} \coloneqq \inf \left\{p\chp(f) \mid f \in \sphere \text{ with } \int_X J_p(f) \dd\m = 0\right\}.
\]
\begin{theorem}\label{thm:approximation_eigenvalue_introduction}
    Let $p \ge 2$, $f_0 \in \sphere$ satisfy $\chp(f_0) < +\infty$ and $\int_X J_p(f_0) \dd \m = 0$, and let $f_t \colon [0,+\infty) \to \Lp$ be a $\Psi$-mirror flow for $\Phi$ starting from $f_0$.
    Denoted by $\omega(f_t)$ the $\omega$-limit set of the curve $f_t$
    \[
        \omega(f_t) \coloneqq \{f_\ast \in \Lp \mid \exists t_n \to +\infty \text{ such that } \lpnorm{f_{t_n} - f_\ast} \to 0\}.
    \]
    Assume $\sobolev$ is compactly embedded in $\Lp$.
    Then the limit
    \[
        \lambda_\ast \coloneqq \lim_{t \to +\infty} p \chp(f_t)
    \]
    exists, is finite and satisfies $\widetilde{\lambda}_{1,p} \le \lambda_\ast$.
    In addition, $\lambda_\ast$ is an eigenvalue of the $p$-Laplacian and any element $f_\ast \in \omega(f_t)$ is an eigenvector for $\lambda_\ast$.
    
    Moreover, $\widetilde{\lambda}_{1,p}$ is an eigenvalue of $p$-Laplacian, and if $\widetilde{\lambda}_{1,p}$ is simple and isolated, and $p\chp(f_0)$ is sufficiently close to $\widetilde{\lambda}_{1,p}$, then there exists an eigenfunction $f_\ast \in \sphere$ for $\lambda_\ast$ such that
    \[
        \lpnorm{f_t - f_\ast} \to 0.
    \]
\end{theorem}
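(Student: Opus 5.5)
The plan is to study the mirror flow through two conserved or monotone quantities and then pass to the limit along time sequences using compactness. Write $J_p(f) = |f|^{p-2}f = \nabla\Psi(f)$. Then the flow reads
\[
\dt J_p(f_t) \in -\partial_F\Phi(f_t) = -\bigl(\plap\text{-part}\bigr) + (M+1)J_p(f_t),
\]
up to the normal-cone contributions of the constraints $\lpnorm{f}\le 1$ and $\chp(f)\le M/p$.

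\textbf{Step 1: invariants of the flow.} Pairing the flow with $f_t$, using $p$-homogeneity of $\chp$ and the chain rule for mirror flows, I would show that $\lpnorm{f_t}=1$ is preserved; this is the Rayleigh-quotient normalization built into $\Phi$ through the choice of $M$. Pairing with the constant function $1$ would give $\dt\int_X J_p(f_t)\,\dd\m = 0$, because $\chp(f+c)=\chp(f)$, so that $\int_X J_p(f_t)\,\dd\m=0$ is preserved. Pairing with $\dt f_t$ (energy identity) would show that $t\mapsto\chp(f_t)$ is nonincreasing and
\[
\int_0^\infty \inner{\dt J_p(f_t)}{\dt f_t}\,\dd t \le \chp(f_0)-\lim_{t\to\infty}\chp(f_t).
\]
For $p\ge 2$ the integrand is bounded below by $c\,|f_t|^{p-2}|\dot f_t|^2$, so the dissipation is integrable. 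Hence $\lambda_\ast$ exists, is finite, and satisfies $\lambda_\ast\ge\widetilde\lambda_{1,p}$, because each $f_t$ is admissible in the definition of $\widetilde\lambda_{1,p}$. Consequently the constraint $\chp\le M/p$ is never active, and the flow reduces to $\dt J_p(f_t) = -\plap$-subgradient $+\mu_t J_p(f_t)$ with a Lagrange multiplier $\mu_t$, presumably $\mu_t=p\chp(f_t)$.

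\textbf{Step 2: the $\omega$-limit set consists of eigenfunctions.} Since $\chp(f_t)$ is bounded, compactness of $\sobolev\hookrightarrow\Lp$ gives that $\omega(f_t)\neq\emptyset$ is compact, and $\omega(f_t)\subset\sphere$. Given $t_n\to\infty$, I would consider the time shifts $g^n_s=f_{t_n+s}$ for $s\in[0,1]$. The integrable dissipation gives $\int_0^1\norm{\dt J_p(g^n_s)}_{L^q}\,\dd s\to 0$, using an estimate of the type $\norm{\dt J_p}_{L^q}^q\lesssim\inner{\dt J_p}{\dot f}$ when $p\ge2$. So $g^n_s\to f_\ast$ uniformly in $s$, and along a.e.\ $s$ we have $\dt J_p(g^n_s)\to 0$ in $L^q$. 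Closedness of the graph of the subdifferential (strong--weak, via lower semicontinuity of $\chp$ and convergence of energies) then gives $\plap f_\ast = -\lambda_\ast J_p(f_\ast)$. Note that the orthogonality $\int_X J_p(f_\ast)\,\dd\m=0$ passes to the limit.

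\textbf{Step 3: $\widetilde\lambda_{1,p}$ is an eigenvalue.} A minimizer exists by the direct method together with compactness. To see that it is an eigenfunction, start the flow at a minimizer: the energy cannot drop below $\widetilde\lambda_{1,p}$, so the dissipation vanishes and the flow is stationary, hence an eigenfunction by Step 2. Alternatively, use a Lagrange-multiplier argument, where the multiplier for the constraint $\int_X J_p\,\dd\m=0$ vanishes after testing with constants.

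\textbf{Step 4: convergence.} If $p\chp(f_0)$ is close to $\widetilde\lambda_{1,p}$ and $\widetilde\lambda_{1,p}$ is isolated, then $\lambda_\ast\in[\widetilde\lambda_{1,p},p\chp(f_0)]$ forces $\lambda_\ast=\widetilde\lambda_{1,p}$. By simplicity, the eigenfunctions in $\sphere$ are $\pm f_\ast$. Since $\omega(f_t)$ is connected (the curve is continuous and the orbit is precompact) and is contained in $\{f_\ast,-f_\ast\}$, it is a single point, which gives $\lpnorm{f_t-f_\ast}\to0$.

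\textbf{Main obstacle.} I expect the hardest step to be Step 1's rigorous justification of the chain rules and of the inactivity of the constraints, together with the estimate converting dissipation into $L^q$-smallness of $\dt J_p$ in Step 2. That estimate is where $p\ge2$ is essential. I would first try the pointwise inequality
\[
(J_p(a)-J_p(b))(a-b)\ge c\,|J_p(a)-J_p(b)|^{q}\,\max(|a|,|b|)^{\,?}
\]
and combine it with Hölder's inequality and the uniform bound $\lpnorm{f_t}=1$.
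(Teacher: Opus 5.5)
Your overall architecture is sound: invariance of $\sphere_0$, monotone energy, eigenfunctions in the $\omega$-limit set, a stationary flow started at a minimizer, and connectedness of $\omega(f_t)$. But the step you flag as the main obstacle is a genuine gap, and the estimate you propose for it is false.

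\textbf{The dissipation estimate.} A mirror flow only makes $t\mapsto J_p(f_t)$ locally absolutely continuous in $L^q$. Since $\nabla\Psi^\ast=J_q$ is merely $(q-1)$-H\"older from $L^q$ to $L^p$ when $p>2$, this does not provide a derivative $\dot f_t$ in $L^p$. So ``pairing with $\dt f_t$'' and the chain rule $\dt\chp(f_t)=\inner{\xi_t}{\dot f_t}$ are not available.

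Moreover, $\lqnorm{\dt J_p(f_t)}^q\lesssim\inner{\dt J_p(f_t)}{\dot f_t}$ cannot hold for $p>2$. Under the time rescaling $t\mapsto ct$ the left side scales like $c^q$ and the right side like $c^2$, so it fails as $c\to0$. The same defect kills your pointwise inequality: for $a=1$, $b=1-\eps$ the left side is about $(p-1)\eps^2$, while $|J_p(a)-J_p(b)|^q$ is about $(p-1)^q\eps^q\gg\eps^2$. The correct, time-homogeneous version uses the square,
\[
(J_p(a)-J_p(b))(a-b)\ge\tfrac{1}{p-1}|J_p(a)-J_p(b)|^2\max(|a|,|b|)^{2-p}.
\]
H\"older's inequality with exponents $2/q$ and $2/(2-q)$ then controls the squared $L^q$-norm on the unit sphere.

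\textbf{How the paper closes it.} The paper never differentiates $f_t$. By \cref{thm:mirror_flow_equivalent_evi_solution} the flow is a $\lambda$-EVI solution. Taking $x=x_s=f_{t-h}$ in \eqref{eq:evi_exponential_integral_form} bounds $D_\Psi(f_{t-h},f_t)$ by $E_\lambda(h)\e^{-\lambda h}(\Phi(f_{t-h})-\Phi(f_t))$. Dual symmetry and the bound $D_{\Psi^\ast}(\varphi,\psi)\ge\frac{q-1}{2}(\norm{\varphi}+\norm{\psi})^{q-2}\norm{\varphi-\psi}^2$ from \cref{rem:lp_bregman} then give \cref{thm:energy_dissipation}. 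Since $\lqnorm{J_p(f_t)}=1$, this reads $-\dt\chp(f_t)\ge c_p\lqnorm{\dt J_p(f_t)}^2$ for a.e.\ $t$. The same EVI also gives the monotonicity of $\chp(f_t)$ that you wanted from the energy identity.

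\textbf{Your Step 2 versus the paper's route.} With the corrected dissipation inequality, Cauchy--Schwarz on $[t_n,t_n+1]$ makes your Step 2 work, and it is a genuinely different and more direct route. You pass to the limit in $p\chp(f_{t_n+s})J_p(f_{t_n+s})-\dt J_p(f_{t_n+s})\in\partial\chp(f_{t_n+s})$ using the strong closedness of the graph of $\partial\chp$. You must still identify $\lambda_\ast J_p(f_\ast)$ as the minimal-norm subgradient. This follows as in \cref{prop:critical_points_are_eigen_vectors} from $\inner{\xi}{f_\ast}=p\chp(f_\ast)$ for every $\xi\in\partial\chp(f_\ast)$. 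The paper instead extracts a limiting flow from the time shifts via \cref{lem:compactness_mirror_flows}, proves a.e.\ energy convergence in \cref{lem:energy_convergence}, and concludes by lower semicontinuity of $\slope{\Phi}$. Your version needs neither lemma, because energy convergence along a single trajectory is automatic from monotonicity. Steps 3 and 4 are fine once the dissipation inequality is in place.

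\textbf{A small slip in Step 1.} Pairing with $1$ kills the $\partial\chp$ term but not the multiplier term. So $\dt\int_X J_p(f_t)\,\dd\m=p\chp(f_t)\int_X J_p(f_t)\,\dd\m$, not $0$, and the vanishing follows by Gr\"onwall from the initial condition, as in the paper.
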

\begin{remark}\label{rem:about_simplicity_and_isolation}
    The above result concerns the Neumann boundary condition.
    However, the Dirichlet case can be treated in the same way.
    Specifically, the simplicity and isolation of the first Dirichlet eigenvalue are known in smooth spaces: for example, on any bounded domain in Euclidean space~\cite{Anane1987Simplicite,Lindqvist1990Equation,Lindqvist2008NonlinearEigenvalue}.
\end{remark}

We also establish an existence result for $\Psi$-mirror flows on a slightly more general setting covering the above $\Lp$ cases.
Before stating the result, we fix the setting.
Let $V$ be a Banach space such that its dual space $V^\ast$ has the Radon--Nikod\'ym property, and let $\Psi \colon V \to [0, +\infty)$ be a $C^1$-convex functional such that its convex conjugate $\Psi^\ast$ is also $C^1$ on $V^\ast$.
Let $\Phi \colon V \to (-\infty, +\infty]$ be a proper, lower semicontinuous and $\lambda$-convex functional for $\Psi$ and some $\lambda \in \R$, i.e., $\Phi - \lambda\Psi$ is convex.
We also suppose that \cref{ass:bregman_lower_inequality,ass:psi_norm_bounds,ass:bounded_from_below,ass:bounded_sublevel_compact} hold; in \cref{ass:bregman_lower_inequality} we define a map
\[
    \bregman{x}{y} \coloneqq \Psi(x) - \Psi(y) - \inner{\nabla\Psi(y)}{x-y} \text{ for each } x, y \in V,
\]
called the \textit{Bregman divergence}: for details, see~\cref{sec:bregman_divergence}.
\begin{theorem}\label{thm:existence_introduction}
    Let $\domain{\Phi}$ be the domain of $\Phi$.
    Under the above setting, for each $x_0 \in \domain{\Phi}$, there exists a $\Psi$-mirror flow $x_t$ for $\Phi$ starting from $x_0$.
\end{theorem}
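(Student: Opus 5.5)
The natural route is a minimizing-movement (implicit Euler) scheme in mirror coordinates, followed by a compactness argument in the dual variable $\nabla\Psi(x)$. Fix $\tau>0$ small, in particular $\tau\lambda^- <1$. Set $x^\tau_0 = x_0$ and define recursively
\[
    x^\tau_n \in \argmin_{y \in V} \Bigl\{ \Phi(y) + \tfrac{1}{\tau}\, \bregman{y}{x^\tau_{n-1}} \Bigr\}.
\]
Existence of a minimizer should follow from three facts. First, $\Phi - \lambda\Psi$ is convex and lower semicontinuous. Second, $\bregman{\cdot}{x}$ controls $\Psi$ from below by \cref{ass:bregman_lower_inequality,ass:psi_norm_bounds}, so for $\tau$ small the functional is coercive. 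Third, sublevel sets are compact, or at least the functional is weakly lower semicontinuous by \cref{ass:bounded_from_below,ass:bounded_sublevel_compact}. The Euler--Lagrange inclusion then reads
\[
    \frac{\nabla\Psi(x^\tau_n) - \nabla\Psi(x^\tau_{n-1})}{\tau} \in -\partial_F\Phi(x^\tau_n).
\]
The Fréchet subdifferential sum rule applies here because $\Psi$ is $C^1$. This inclusion is exactly the discrete version of the mirror flow.

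Next come the a priori estimates. Comparing $x^\tau_n$ with the competitor $y = x^\tau_{n-1}$ gives
\[
    \Phi(x^\tau_n) + \tfrac1\tau \bregman{x^\tau_n}{x^\tau_{n-1}} \le \Phi(x^\tau_{n-1}).
\]
Summing yields $\sup_n \Phi(x^\tau_n)\le\Phi(x_0)$ and $\sum_n \bregman{x^\tau_n}{x^\tau_{n-1}}/\tau \le \Phi(x_0)-\inf\Phi$, where the infimum is finite by \cref{ass:bounded_from_below}. In practice I expect to use a sharper bound coming from $\lambda$-convexity along the scheme: pair the Euler--Lagrange inclusion with $x^\tau_n - x^\tau_{n-1}$ and use the three-point identity for Bregman divergences. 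This should give a discrete energy-dissipation estimate controlling
\[
    \sum_n \tau \,\Bigl\| \tfrac{\nabla\Psi(x^\tau_n)-\nabla\Psi(x^\tau_{n-1})}{\tau}\Bigr\|_{V^\ast}^2 \quad(\text{or the corresponding }\Psi^\ast\text{-dual quantity})
\]
on bounded time intervals. From these estimates I would obtain three consequences:
\begin{itemize}
    \item the values $\Phi(\bar x^\tau_t)$ are uniformly bounded;
    \item $x^\tau$ stays in a bounded sublevel set, which is compact by \cref{ass:bounded_sublevel_compact};
    \item the dual curves $\xi^\tau_t := \nabla\Psi(\bar x^\tau_t)$, interpolated piecewise linearly, are bounded in $W^{1,1}$ or $W^{1,2}(0,T;V^\ast)$.
\end{itemize}

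For the passage to the limit, a refined Arzelà--Ascoli argument gives a subsequence $\tau_k\to0$ along which $\bar x^{\tau_k}_t \to x_t$ in $V$ for every $t$. Here I use compactness of sublevels together with the equicontinuity of $\xi^\tau$ and continuity of $\nabla\Psi^\ast$, which lets me recover $x$ from $\xi$. The limit $\xi_t=\nabla\Psi(x_t)$ is then absolutely continuous. Because $V^\ast$ has the Radon--Nikodým property, $\xi_t$ is differentiable a.e., with $\dot\xi^{\tau}\rightharpoonup \dot\xi$ weakly in $L^1(0,T;V^\ast)$ or weakly$^\ast$. To identify the inclusion $\dot\xi_t\in-\partial_F\Phi(x_t)$, I would write the discrete inclusion in variational form using $\lambda$-convexity:
\[
    \Phi(y) \ge \Phi(x^\tau_n) - \inner{\dot\xi^\tau}{y-x^\tau_n} + \lambda\,\bregman{y}{x^\tau_n} \quad\text{for all } y.
\]
Integrating this against nonnegative test functions in time and passing to the limit, using lower semicontinuity of $\Phi$ and strong convergence of $x^\tau$, yields the same inequality for $x_t$ at a.e.\ $t$. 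That limiting inequality characterizes $-\dot\xi_t\in\partial_F\Phi(x_t)$, since for $\lambda$-convex $\Phi$ the Fréchet subdifferential coincides with the convex subdifferential of $\Phi-\lambda\Psi$ shifted by $\lambda\nabla\Psi$.

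The main obstacle will be this limit identification. The product term $\inner{\dot\xi^\tau}{x^\tau_n}$ is a pairing of a weakly converging sequence with a strongly converging one. That is fine provided $x^\tau\to x$ uniformly, or at least in $L^\infty(0,T;V)$, but it forces me to obtain strong convergence of the primal curves. This is precisely where compactness of sublevels and the uniform continuity of $\nabla\Psi^\ast$ on bounded sets, from \cref{ass:psi_norm_bounds}, have to be combined carefully. I would also need to check that the time derivative bound on $\xi^\tau$ is genuinely in $L^p$ for some $p>1$ rather than only measure-valued, so that the Radon--Nikodým property gives an honest a.e. derivative. If only a $BV$ bound came out, I would instead try to derive the $L^2$ bound from the discrete energy-dissipation equality.
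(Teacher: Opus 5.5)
\textbf{Gap: the uniform bound on $\norm{x^\tau_n}$ is never established.} The compact set you need for Ascoli--Arzel\`a and the $L^2$ bound on the dual increments both require $\sup_{n\tau\le T}\norm{x^\tau_n}\le C$ uniformly in $\tau$. None of your estimates gives this.

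\begin{itemize}
\item A bound on $\Phi(x^\tau_n)$ says nothing about norms, since \cref{ass:bounded_sublevel_compact} only concerns \emph{bounded} sublevel sets.
\item The telescoped comparison with $y=x^\tau_{n-1}$ gives $\sum_n\bregman{x^\tau_n}{x^\tau_{n-1}}\le\tau(\Phi(x_0)-m_0)$. By (i) of \cref{ass:bregman_lower_inequality} this yields only $\sum_n\norm{x^\tau_n-x^\tau_{n-1}}^\alpha\lesssim\tau$. H\"older then gives $\norm{x^\tau_n-x_0}\lesssim(n\tau)^{1-1/\alpha}\tau^{2/\alpha-1}$, which blows up as $\tau\to0$ when $\alpha>2$. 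That is exactly the $L^p$, $p>2$, case of \cref{rem:lp_bregman}.
\item Pairing the inclusion with $x^\tau_n-x^\tau_{n-1}$ only adds the reverse divergence $\bregman{x^\tau_{n-1}}{x^\tau_n}$ of consecutive steps. It has the same defect.
\item Turning the energy estimate into an $L^2$ bound on $(\gradpsi(x^\tau_n)-\gradpsi(x^\tau_{n-1}))/\tau$ via dual symmetry and (ii) needs $g(\norm{\gradpsi(x^\tau_n)}+\norm{\gradpsi(x^\tau_{n-1})})$ bounded away from $0$. In $L^p$, $g(r)\sim r^{q-2}\to0$. Through \cref{ass:psi_norm_bounds} this requires the very norm bound you are trying to prove, so the argument is circular.
\end{itemize}

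You have the right tool, the three-point identity, but you pair with the wrong vector. Pair the discrete inclusion with $y_0-x^\tau_n$ for a \emph{fixed} $y_0\in\domain{\Phi}$. Together with \cref{lem:global_subgradient_formula} this gives the discrete EVI
\[
\frac{\bregman{y_0}{x^\tau_n}-\bregman{y_0}{x^\tau_{n-1}}}{\tau}+\lambda\,\bregman{y_0}{x^\tau_n}\le\Phi(y_0)-\Phi(x^\tau_n)\le\Phi(y_0)-m_0 .
\]
For $\tau\lambda^-<1$ this iterates to a bound on $\bregman{y_0}{x^\tau_n}$ for $n\tau\le T$ (\cref{prop:discrete_EVI_differential,cor:discrete_sequence_bound_from_minimizer}). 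The chain then closes as in \cref{lem:apriori_boounds}: (i) bounds $\norm{x^\tau_n}$, \cref{ass:psi_norm_bounds} bounds $\norm{\gradpsi(x^\tau_n)}$, continuity of $g$ bounds $g$ from below, and your energy estimate plus (ii) gives the $L^2$ bound.

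\textbf{Limit passage.} Here you take a different route from the paper, which never identifies a weak limit of $\dot\xi^\tau$. The paper iterates the discrete EVI into exponential form (\cref{prop:discrete_EVI_exponential}). It passes to the limit in that purely scalar inequality using only pointwise convergence of $\bar x^{\tau_k}_t$ and lower semicontinuity of $\Phi$. It then concludes by \cref{thm:mirror_flow_equivalent_evi_solution}: EVI plus local absolute continuity of $\gradpsi(x_t)$ already gives the mirror inclusion. That absolute continuity comes from a weak $L^2(0,T)$ limit of the scalar functions $|D^\tau_t|$.

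Your route can be completed, but two points need care.
\begin{itemize}
\item $V^\ast$ is only assumed to have the Radon--Nikod\'ym property, not to be reflexive, so do not invoke abstract weak compactness in $L^2(0,T;V^\ast)$. Instead, derive $\int\eta\inner{\dot\xi^\tau}{x_t}\dd t\to\int\eta\inner{\dot\xi}{x_t}\dd t$ from the pointwise convergence $\xi^\tau_t\to\xi_t$ and the uniform $L^1$ bound, by approximating the continuous map $t\mapsto\eta(t)x_t$ uniformly with step functions. The remainder $\int\eta\inner{\dot\xi^\tau}{\bar x^\tau_t-x_t}\dd t$ vanishes by dominated convergence once the norm bound is available. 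So no uniform continuity of $\nabla\Psiast$ is needed, and \cref{ass:psi_norm_bounds}, which bounds $\gradpsi$, would not supply it anyway.
\item The limiting inequality holds off a null set of times that depends on $y$. \cref{lem:global_subgradient_formula} requires it for all $y\in\domain{\Phi}$ at once. Use a countable subset of $\domain{\Phi}$ dense in the graph of $\Phi$; it exists because, by \cref{ass:bounded_sublevel_compact}, $\domain{\Phi}$ is a countable union of compact sets.
\end{itemize}
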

Note that the existence of $\Psi$-mirror flows has been classically studied in the context of doubly nonlinear evolution equations: see for instance~\cite{GRANGE197277,AKAGI200632}.
More recently, a related formulation has been studied in terms of solutions to \textit{evolution variational inequalities} for general cost functions in~\cite{AUBINFRANKOWSKI2026111469}; we show the equivalence between $\Psi$-mirror flows and this notion in~\cref{thm:mirror_flow_equivalent_evi_solution}.
Our existence result does not require the continuity of $\Phi$ and the boundedness of $\partial \Phi$ on bounded sets, the subdifferential growth assumption for $\Phi$, or the cross-convexity and concavity assumptions imposed in~\cite{GRANGE197277,AKAGI200632,AUBINFRANKOWSKI2026111469}.
The proof is based on De Giorgi's minimizing movement scheme using the Bregman divergence $D_\Psi$ instead of the norm.
This idea is already implicit in~\cite{GRANGE197277}, although the assumptions there are different from those imposed in this study.
Notably, our aim is not to extend the existing theory in full generality, but to establish an existence result for $\Psi$-mirror flows under assumptions that are readily checkable in the metric measure space setting, considered above for $p$-Laplacian eigenvalue problems.

In addition to the existence result, we prove the following energy dissipation inequality for $\Psi$-mirror flows, which plays an important role in constructing the deformation map.
\begin{theorem}\label{thm:energy_dissipation_introduction}
    Let $x_t \colon [0, +\infty) \to V$ be a $\Psi$-mirror flow for $\Phi$.
    Assume that \cref{ass:bregman_lower_inequality} holds.
    Then we have
    \[
    \begin{aligned}
        - \dt \Phi(x_t)
        &\ge g(2\norm{\gradpsi(x_t)}) \norm{\dt \gradpsi(x_t)}^2 \text{ for } \Leb\text{-a.e. } t > 0,
    \end{aligned}
    \]
    where $g$ is the continuous function in \cref{ass:bregman_lower_inequality}.
\end{theorem}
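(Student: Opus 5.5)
Set $\xi_t \coloneqq \gradpsi(x_t)$ and $v_t \coloneqq \dt \xi_t$. By the definition of a $\Psi$-mirror flow, $-v_t \in \partial_F\Phi(x_t)$ for $\Leb$-a.e.\ $t$. The plan has two parts: show that $-\dt\Phi(x_t) \ge \inner{v_t}{\dt x_t}$ along the flow, and then bound this pairing from below by $g(2\norm{\xi_t})\norm{v_t}^2$ using \cref{ass:bregman_lower_inequality}.

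\textbf{Step 1: chain rule for $\Phi$.} Since $\Phi-\lambda\Psi$ is convex, write $\Phi=\Phi_0+\lambda\Psi$ with $\Phi_0$ convex. Then $-v_t-\lambda\xi_t\in\partial\Phi_0(x_t)$ in the convex sense. The subgradient inequality at times $t$ and $s$ gives, for $h>0$,
\[
\Phi_0(x_{t+h})-\Phi_0(x_t)\ge \inner{-v_t-\lambda\xi_t}{x_{t+h}-x_t},
\]
together with the symmetric inequality at $t-h$. The function $t\mapsto\Phi(x_t)$ is nonincreasing, hence differentiable a.e.; I expect this monotonicity to be part of the definition or an earlier regularity result for mirror flows. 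The smooth part $\lambda\Psi(x_t)$ is handled by the usual $C^1$ chain rule. Dividing by $h$ and letting $h\to0$ at common differentiability points then gives
\[
\dt\Phi(x_t)=-\inner{v_t}{\dt x_t}.
\]

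\textbf{Step 2: rewriting $\inner{v_t}{\dt x_t}$ and using the Bregman inequality.} The main obstacle is that $x_t$ itself may not be differentiable; only $\xi_t$ is known to be absolutely continuous. To avoid this, I work with difference quotients. Since $x_t=\nabla\Psiast(\xi_t)$,
\[
\inner{\xi_{t+h}-\xi_t}{x_{t+h}-x_t}=\bregman{x_{t+h}}{x_t}+\bregman{x_t}{x_{t+h}},
\]
which is the symmetrized Bregman divergence. Next I apply \cref{ass:bregman_lower_inequality}. I expect it to read roughly $\bregman{x}{y}\ge g(\norm{\gradpsi(x)}+\norm{\gradpsi(y)})\norm{\gradpsi(x)-\gradpsi(y)}^2$ (or the $\Psiast$ analogue). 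This gives
\[
\inner{\xi_{t+h}-\xi_t}{x_{t+h}-x_t}\ge 2g\bigl(\norm{\xi_{t+h}}+\norm{\xi_t}\bigr)\norm{\xi_{t+h}-\xi_t}^2 .
\]
If the constant in the assumption is instead single-sided, the factor $2$ may be absorbed or dropped; only the form $g(2\norm{\xi_t})$ in the limit matters.

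\textbf{Step 3: passing to the limit.} Combine Step 1 in difference-quotient form, i.e.\ $\Phi(x_t)-\Phi(x_{t+h})\ge\inner{v_t}{x_{t+h}-x_t}+o(h)$ modulo the $\lambda$-terms. Replace $v_t$ by $(\xi_{t+h}-\xi_t)/h+o(1)$; this is legitimate because $x_{t+h}-x_t$ is bounded by local Lipschitz continuity of $\nabla\Psiast$. Then divide by $h$ and let $h\downarrow0$. Continuity of $g$ and of $t\mapsto\xi_t$ yields
\[
-\dt\Phi(x_t)\ge g(2\norm{\xi_t})\norm{v_t}^2 \quad\text{for } \Leb\text{-a.e. } t.
\]
The delicate point is controlling the error term $\inner{v_t-(\xi_{t+h}-\xi_t)/h}{x_{t+h}-x_t}$. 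It should be $o(h)$ because the first factor is $o(1)$ at Lebesgue points, by the Radon--Nikod\'ym property of $V^\ast$. For the second factor, $\norm{x_{t+h}-x_t}=O(h)$ follows from the Bregman estimate once $\norm{\xi_{t+h}-\xi_t}=O(h)$. If that Lipschitz-type bound fails, I would instead use only continuity of $x_t$ together with the fact that the error has the form $o(1)\cdot o(1)\cdot h$.
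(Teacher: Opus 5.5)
Your proposal has a genuine gap, and it sits exactly at the error term you flag as delicate. Your argument uses only the single subgradient $-v_t\in\partial_F\Phi(x_t)$. It must be tested at $y=x_{t-h}$: at $y=x_{t+h}$, \cref{lem:global_subgradient_formula} gives an \emph{upper} bound on $\Phi(x_t)-\Phi(x_{t+h})$, so the inequality in your Step~3 points the wrong way.

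Write $v_t=\frac{\xi_t-\xi_{t-h}}{h}+e_h$ with $\norm{e_h}\to0$. You then need $\inner{e_h}{x_t-x_{t-h}}=o(h)$, which essentially requires $\norm{x_t-x_{t-h}}=O(h)$. Nothing in the setting gives this. $\Psiast$ is only assumed $C^1$. Combining (i) of \cref{ass:bregman_lower_inequality} with $\bregman{x_t}{x_{t-h}}\le\norm{\xi_t-\xi_{t-h}}\,\norm{x_t-x_{t-h}}$ only yields $\norm{x_t-x_{t-h}}=O(h^{1/(\alpha-1)})$, which is weaker than $O(h)$ once $\alpha>2$. That is precisely the case of interest: for $\Psi=\frac1p\lpnorm{\cdot}^p$ with $p>2$ one has $\alpha=p$, and $\nabla\Psiast=J_q$ is only $(q-1)$-H\"older, not locally Lipschitz.

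Your fallback does not help. Continuity of $x_t$ makes the error only $o(1)\cdot o(1)$, and the extra factor $h$ you attach to it is exactly the missing Lipschitz bound. For the same reason, the identity $\dt\Phi(x_t)=-\inner{v_t}{\dt x_t}$ in Step~1 is not available, since $\dt x_t$ need not exist.

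The missing idea is to use the subgradient inequality at \emph{all} intermediate times rather than only at $t$. This is what the EVI encodes and what the paper does.
\begin{itemize}
\item For fixed $y$, $\bregman{y}{x_r}=\Psi(y)+\Psiast(\xi_r)-\inner{\xi_r}{y}$ is a $C^1$ function of $\xi_r$. It is therefore absolutely continuous in $r$, with derivative $\inner{v_r}{x_r-y}\le\Phi(y)-\Phi(x_r)-\lambda\bregman{y}{x_r}$. No regularity of $r\mapsto x_r$ enters.
\item Take $y=x_{t-h}$ and integrate this differential inequality over $[t-h,t]$ against the factor $\e^{\lambda(r-t+h)}$. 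Using that $r\mapsto\Phi(x_r)$ is nonincreasing, this gives $\e^{\lambda h}\bregman{x_{t-h}}{x_t}\le E_\lambda(h)\bigl(\Phi(x_{t-h})-\Phi(x_t)\bigr)$. This is \eqref{eq:evi_exponential_integral_form} with $s=t-h$ and $x=x_{t-h}$, as used in the paper.
\item Apply the dual symmetry $\bregman{x_{t-h}}{x_t}=\bregmanast{\xi_t}{\xi_{t-h}}$ and (ii) of \cref{ass:bregman_lower_inequality}. Dividing by $h^2$ and letting $h\downarrow0$, continuity of $g$ and of $\xi$ gives the claim.
\end{itemize}
Only the one-sided divergence appears in this argument. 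That is why the paper's constant is $g$, rather than the $2g$ your symmetrized quotient would give if its error term could be controlled.
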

To the best of our knowledge, the energy dissipation of $\Psi$-mirror flows was not previously known to hold in infinite-dimensional spaces.
Although analogous results are known under the symmetry of the Bregman divergence $D_\Psi$, in our setting it is generally not symmetric: see~\cite{AUBINFRANKOWSKI2026111469}.

In what follows, we first study $\Psi$-mirror flows on general Banach spaces in \cref{sec:properties,sec:existence}, and then apply these results to $p$-Laplacian eigenvalue problems in \cref{sec:p_laplacian}.

\section{Preliminaries}\label{sec:preliminaries}
\subsection{Absolutely continuous curves}
Let $(\X, \dist)$ be a complete metric space.
Denoted by $\Leb$ the one-dimensional Lebesgue measure.
A continuous map $x_t \colon [0, +\infty) \to \X$ is said to be a \textit{curve}.

\begin{definition}[locally absolutely continuous curve]
A curve $x_t \colon [0,+\infty) \to \X$ is said to be \textit{locally absolutely continuous} if there exists $A \in L^1_{\mathsf{loc}}([0,+\infty))$ such that
\[
    \dist(x_t, x_s) \le \int_s^t A(r) \dd r \text{ for each } 0 \le s \le t.
\]
\end{definition}

The proof of the following result can be found, for instance, in \cite{AGS}.
\begin{theorem}
    Let $x_t \colon [0,+\infty) \to X$ be a locally absolutely continuous curve.
    Then we have
    \begin{itemize}
        \item the metric speed $\abs{\dot{x}_t}$ of $x_t$, defined by 
        \[
            \abs{\dot{x}_t} \coloneqq \lim_{s \to t}\frac{\dist(x_t, x_s)}{|t - s|},
        \]
        exists and is finite for $\Leb$-a.e. $t >0$.
        \item the map $t \in [0, +\infty) \mapsto |\dot{x}_t|$ belongs to $L^1_{\mathsf{loc}}([0, +\infty))$.
        \item it holds that
        \[
            \dist(x_t, x_s) \le \int_s^t |\dot{x}_r| \dd r \text{ for each } 0 \le s \le t.
        \]
    \end{itemize}
\end{theorem}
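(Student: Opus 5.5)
We follow the classical argument in \cite{AGS}, adapted to a possibly non-Euclidean complete metric space, and prove the three assertions in the order stated. The core idea is to control the metric speed by a countable family of Lipschitz functions. Fix a countable dense subset $\{y_n\}_{n\in\N}$ of the closure of the image $x([0,+\infty))$; this image is separable because $x_t$ is continuous and $[0,+\infty)$ is separable. Define $\varphi_n(t) \coloneqq \dist(x_t, y_n)$. By the triangle inequality and the hypothesis,
\[
    |\varphi_n(t) - \varphi_n(s)| \le \dist(x_t,x_s) \le \int_s^t A(r)\dd r,
\]
so each $\varphi_n$ is locally absolutely continuous on $[0,+\infty)$. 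Hence $\varphi_n$ is differentiable $\Leb$-a.e. and satisfies $|\varphi_n'(t)| \le A(t)$ at every Lebesgue point of $A$ where $\varphi_n$ is differentiable. Set $m(t) \coloneqq \sup_n |\varphi_n'(t)|$. This is a measurable function with $m \le A$ $\Leb$-a.e., so $m \in L^1_{\mathsf{loc}}$.

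Next we show that $\dist(x_t,x_s) \le \int_s^t m(r)\dd r$ for all $0 \le s \le t$. By density of $\{y_n\}$ we have $\dist(x_t,x_s) = \sup_n |\varphi_n(t)-\varphi_n(s)|$; indeed, choosing $y_n$ close to $x_s$ gives $\varphi_n(t)-\varphi_n(s) \approx \dist(x_t,x_s)$. Since each $\varphi_n$ is absolutely continuous,
\[
    |\varphi_n(t)-\varphi_n(s)| \le \int_s^t |\varphi_n'| \le \int_s^t m,
\]
and taking the supremum over $n$ gives the claim. The step requiring the most care is the identification of $m$ with the metric speed, that is, showing that the limit defining $|\dot x_t|$ exists and equals $m(t)$ a.e. For the lower bound, at any $t$ where all $\varphi_n$ are differentiable (a full-measure set, since the family is countable),
\[
    \liminf_{s\to t} \frac{\dist(x_t,x_s)}{|t-s|} \ge \liminf_{s \to t} \frac{|\varphi_n(t)-\varphi_n(s)|}{|t-s|} = |\varphi_n'(t)|
\]
for every $n$, so the $\liminf$ is at least $m(t)$. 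For the upper bound, the integral inequality just proved gives $\dist(x_t,x_s)/|t-s| \le \frac{1}{|t-s|}\int_{[s,t]} m$. At Lebesgue points of $m$ this average tends to $m(t)$, so $\limsup_{s \to t} \le m(t)$.

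Combining the two bounds, the limit $|\dot x_t|$ exists, is finite and equals $m(t)$ for $\Leb$-a.e. $t$. This gives the first item. The second item follows from $m \in L^1_{\mathsf{loc}}$, and the third is the integral inequality established above with $m = |\dot x_r|$ a.e. The main point to verify carefully is measurability of $m$. This holds because each $\varphi_n'$ is an a.e. limit of measurable difference quotients, and a countable supremum of measurable functions is measurable. The countability supplied by separability is exactly what makes this argument work without any linear structure on $\X$.
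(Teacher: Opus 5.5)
Your proof is correct and is essentially the standard argument of \cite[Theorem~1.1.2]{AGS}, which the paper cites instead of giving its own proof. The steps match that argument: the countable family $\varphi_n(t)=\dist(x_t,y_n)$ over a dense subset of the (separable) image, $m=\sup_n|\varphi_n'|$, the identity $\dist(x_t,x_s)=\sup_n|\varphi_n(t)-\varphi_n(s)|$, and the $\liminf$/$\limsup$ comparison at points where all $\varphi_n$ are differentiable and at Lebesgue points of $m$.
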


\begin{remark}
    Let $\X$ be a reflexive Banach space (or more generally, a Banach space with the Radon--Nikod\'ym property) and $x_t \colon [0, +\infty) \to \X$ be a curve.
    Then the following are equivalent:
    \begin{enumerate}
        \item $x_t$ is a locally absolutely continuous curve.
        \item $x_t$ is differentiable at $\Leb$-a.e.~points $t > 0$, its derivative $\dot{x}_t$ satisfies that $t \mapsto \dot{x}_t$ belongs to $L^1_{\mathsf{loc}}([0,+\infty); \X)$ and it holds that
        \[
            x_t - x_s = \int_s^t \dot{x}_r \dd r \text{ for each } 0 \le s \le t.
        \]
    \end{enumerate}
    Moreover, in this case, we have
    \[
        \norm{\dot{x}_t} = |\dot{x}_t| \text{ for } \Leb\text{-a.e. } t > 0.
    \]
\end{remark}

\subsection{Bregman divergence}\label{sec:bregman_divergence}
Let $V$ be a Banach space with dual space $V^\ast$, and $\Psi \colon V \to \R$ be a convex $C^1$-functional such that its convex conjugate $\Psiast$ is also $C^1$ on $V^\ast$.
Denoted by $\nabla\Psi$ (resp. $\nabla \Psiast$) the Fr\'echet derivative of $\Psi$ (resp. $\Psiast$).

The following function originates from Bregman;~\cite{BREGMAN1967200}.
\begin{definition}[$\Psi$-Bregman divergence]
    The $\Psi$-Bregman divergence $D_\Psi \colon V \times V \to [0, +\infty)$ is defined to be that
    \begin{equation}\label{eq:bregman_div}
        \bregman{x}{y} \coloneqq \Psi(x)-\Psi(y) - \inner{\nabla \Psi (y)}{x - y} \text{ for each } x, y \in V.
    \end{equation}
\end{definition}
Note that the function $D_\Psi$ is not symmetric in general.
However, it satisfies the following properties, which are analogous to those of distance functions.
\begin{proposition}
    The following hold:
    \begin{itemize}
        \item (nondegeneracy) for any $x, y \in V$, if $x=y$, then $\bregman{x}{y} = 0$. Moreover, if $\Psi$ is strictly convex, the converse implication also holds.
        \item (three-point identity) for any $x, y, z \in V$ we have
            \begin{equation}\label{eq:threepoint-identity}
                \bregman{x}{y} = \bregman{x}{z} + \bregman{z}{y} + \inner{\nabla \Psi (z)- \nabla \Psi(y)}{x-z}.
            \end{equation}
        \item (dual symmetry) for any $x, y \in V$ we have
            \begin{equation}\label{eq:bregman_duality}
                \bregman{x}{y} = \bregmanast{\nabla \Psi (y)}{\nabla \Psi (x)}.
            \end{equation}
    \end{itemize}
\end{proposition}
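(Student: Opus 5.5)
Each of the three properties follows by direct expansion of \eqref{eq:bregman_div}, together with convexity of $\Psi$ and the Fenchel--Young equality; I will take them in the order stated.

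\textbf{Nondegeneracy.} If $x=y$, then \eqref{eq:bregman_div} gives $\bregman{x}{x}=\Psi(x)-\Psi(x)-\inner{\nabla\Psi(x)}{0}=0$. Nonnegativity of $D_\Psi$ is just the first-order characterization of convexity for a $C^1$ convex functional: $\Psi(x)\ge\Psi(y)+\inner{\nabla\Psi(y)}{x-y}$.

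For the converse, suppose $\Psi$ is strictly convex and $\bregman{x}{y}=0$ with $x\neq y$. Then $\Psi$ coincides with the affine minorant $\ell(z)\coloneqq\Psi(y)+\inner{\nabla\Psi(y)}{z-y}$ at both $z=y$ and $z=x$. By convexity, $\Psi\le\ell$ on the segment $[y,x]$, and since $\Psi\ge\ell$ everywhere, $\Psi=\ell$ on that segment. This contradicts strict convexity.

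\textbf{Three-point identity.} I will expand the right-hand side of \eqref{eq:threepoint-identity}. The terms $\Psi(z)$ cancel, and we are left with
\[
\Psi(x)-\Psi(y)-\inner{\nabla\Psi(z)}{x-z}-\inner{\nabla\Psi(y)}{z-y}+\inner{\nabla\Psi(z)-\nabla\Psi(y)}{x-z}.
\]
The two $\nabla\Psi(z)$ pairings cancel. The remaining $\nabla\Psi(y)$ terms combine as $-\inner{\nabla\Psi(y)}{(z-y)+(x-z)}=-\inner{\nabla\Psi(y)}{x-y}$, which gives exactly $\bregman{x}{y}$.

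\textbf{Dual symmetry.} Set $\xi=\nabla\Psi(x)$ and $\eta=\nabla\Psi(y)$.

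1. Since $\Psi$ is convex, finite and $C^1$ (hence continuous), we have $\xi\in\partial\Psi(x)$. The Fenchel--Young equality then gives $\Psi^\ast(\xi)=\inner{\xi}{x}-\Psi(x)$, and likewise for $\eta$ and $y$.
2. $\Psi$ is proper, convex and lower semicontinuous, so $\Psi^{\ast\ast}=\Psi$ on $V$. Hence $\xi\in\partial\Psi(x)$ is equivalent to $x\in\partial\Psi^\ast(\xi)$, where $x$ is viewed in $V^{\ast\ast}$.
3. Because $\Psi^\ast$ is $C^1$ on $V^\ast$, $\partial\Psi^\ast(\xi)=\{\nabla\Psi^\ast(\xi)\}$. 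Therefore $\nabla\Psi^\ast(\xi)=x$, and similarly $\nabla\Psi^\ast(\eta)=y$.
4. Now compute
\[
\bregmanast{\eta}{\xi}=\Psi^\ast(\eta)-\Psi^\ast(\xi)-\inner{\eta-\xi}{x}.
\]
Substituting the Fenchel--Young expressions from step 1, this becomes
\[
\inner{\eta}{y}-\Psi(y)-\inner{\xi}{x}+\Psi(x)-\inner{\eta}{x}+\inner{\xi}{x}=\Psi(x)-\Psi(y)-\inner{\eta}{x-y}=\bregman{x}{y}.
\]

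The only delicate point is step 3, namely identifying $\nabla\Psi^\ast(\nabla\Psi(x))$ with $x$ as an element of $V$, and not merely of $V^{\ast\ast}$. The subdifferential inversion in step 2 settles this. In any case the final computation uses only the pairing $\inner{\eta-\xi}{x}$, so the identity holds even if one only has the equality in $V^{\ast\ast}$.
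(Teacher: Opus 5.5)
Your proof is correct. For the three-point identity and the dual symmetry you follow the paper's route: direct expansion, and substitution of the Fenchel--Young equalities $\Psi(x)=\inner{\nabla\Psi(x)}{x}-\Psi^\ast(\nabla\Psi(x))$. You also usefully justify $\nabla\Psi^\ast(\nabla\Psi(x))=x$, which the paper's one-line substitution uses tacitly in the pairing term of $D_{\Psi^\ast}$.

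The genuine difference is in the converse of nondegeneracy. The paper rewrites $\bregman{x}{y}=\Psi(x)+\Psi^\ast(\nabla\Psi(y))-\inner{\nabla\Psi(y)}{x}$ and reads $\bregman{x}{y}=0$ as the equality case of Fenchel--Young, i.e.\ $\nabla\Psi(y)\in\partial\Psi(x)=\{\nabla\Psi(x)\}$. It then passes from $\nabla\Psi(y)=\nabla\Psi(x)$ to $y=x$, which requires injectivity of $\nabla\Psi$. You instead stay on the primal side: the tangent affine minorant at $y$, touching $\Psi$ also at $x$, forces $\Psi$ to be affine on $[y,x]$.

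Your argument is more elementary and never uses $\Psi^\ast$, so it holds for any differentiable strictly convex $\Psi$. The paper's route, completed by your step~3 (where $\nabla\Psi^\ast\circ\nabla\Psi=\mathrm{id}$ gives injectivity of $\nabla\Psi$), shows something extra. In the standing setting where $\Psi^\ast$ is $C^1$, the converse holds without a separate strict convexity hypothesis, since differentiability of $\Psi^\ast$ already forces $\Psi$ to be strictly convex.
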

\begin{proof}
    We show each statement in turn.

    \noindent
    \underline{\textbf{nondegeneracy}}: It is clear that $x=y$ implies $\bregman{x}{y} = 0$.
    We show the converse direction.
    By the Fenchel--Young inequality, we have
    \[
        \bregman{x}{y} = \Psi(x) + \Psiast(\nabla \Psi(y)) - \inner{\nabla \Psi(y)}{x}.
    \]
    Thus, $\bregman{x}{y} = 0$ implies $\nabla \Psi(y) = \nabla\Psi(x)$.
    This gives $y = x$. 

    \noindent
    \underline{\textbf{three-point identity}}: This directly follows from the definition.

    \noindent
    \underline{\textbf{dual symmetry}}: Substituting the equalities $\Psi(x) = - \Psiast(\nabla\Psi(x)) + \inner{\nabla\Psi(x)}{x}$ and $\Psi(y) = - \Psiast(\nabla\Psi(y)) + \inner{\nabla\Psi(y)}{y}$ into the definition of $D_\Psi$ gives the conclusion.
\end{proof}

\subsection{\texorpdfstring{$\lambda$}{lambda}-convex functions}
Let $(V, \norm{\cdot})$ be a Banach space and $\Phi \colon V \to (-\infty, +\infty]$ be a lower semicontinuous and proper functional, i.e., $\domain{\Phi} \coloneqq \{ x \in V \mid \Phi(x) < +\infty\} \neq \emptyset$.
Let $\Psi \colon V \to \R$ be a convex $C^1$-functional whose convex conjugate $\Psiast$ is also $C^1$ on $V^\ast$.

We define a notion of convexity for $\Phi$ with respect to $\Psi$ as follows:
\begin{definition}[$\lambda$-convex for $\Psi$]\label{def:lambda_convex}
    The functional $\Phi$ is said to be \textit{$\lambda$-convex} for $\Psi$ and some $\lambda \in \R$ if the map $x \in V \mapsto (\Phi - \lambda \Psi)(x)$ is convex.
\end{definition}

\begin{remark}
    It is unclear whether this notion should be regarded as a generalization or a special case of \textit{$(\lambda, p)$-convexity} of functions on metric spaces: see for instance \cite{RossiSegattiStefanelli2011GlobalAttractors}.
    As an example, we consider $L^p$-space as $V$.
    In this case, thanks to Corollary~2 of \cite{Xu1991Inequalities}, if $p \ge 2$ and $\Phi$ is $\lambda$-convex for $\Psi(x) \coloneqq \frac{1}{p}\lpnorm{x}^p$ and some $\lambda \ge 0$, then $\Phi$ is $(\lambda c_p, p)$-convex in the sense of~\cite{RossiSegattiStefanelli2011GlobalAttractors}, where $c_p$ is a constant depending only on $p$.
    However, it is not clear whether the converse holds.
\end{remark}

We denote the Fr\'echet subdifferential of $\Phi$ at $x \in V$ by
\[
    \partial_F \Phi(x) \coloneqq \left\{ \xi \in \vdual \mid \liminf_{y \to x} \frac{\Phi(y) - \Phi(x) -\inner{\xi}{y - x}}{\norm{y - x}} \ge 0\right\}.
\]
\begin{lemma}\label{lem:global_subgradient_formula}
    Let $x \in V$ and $\xi \in \vdual$.
    Let $\Phi$ be $\lambda$-convex for $\Psi$ and some $\lambda \in \R$.
    Then $\xi \in \partial_F\Phi(x)$ if and only if
    \[
        \inner{\xi}{y-x} + \lambda \bregman{y}{x} \le \Phi(y) - \Phi(x) \text{ for each } y \in \domain{\Phi}.
    \]
\end{lemma}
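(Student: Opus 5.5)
The plan is to reduce the statement to the classical characterization of the convex subdifferential, applied to the convex functional $\Phi_\lambda \coloneqq \Phi - \lambda\Psi$. The link is that $\Psi$ is $C^1$, so the Fréchet subdifferential obeys the sum rule
\[
    \partial_F\Phi(x) = \partial_F\Phi_\lambda(x) + \lambda\nabla\Psi(x).
\]
This holds because adding a Fréchet differentiable function shifts the Fréchet subdifferential by its derivative. Indeed, $\Psi(y) - \Psi(x) - \inner{\nabla\Psi(x)}{y-x} = o(\norm{y-x})$, so the two liminf quotients in the definition differ by a term tending to $0$. I would also note the domain identity $\domain{\Phi} = \domain{\Phi_\lambda}$, since $\Psi$ is real-valued.

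Next I would record that, for a convex functional $\Phi_\lambda$, we have $\eta \in \partial_F\Phi_\lambda(x)$ if and only if
\[
    \inner{\eta}{y-x} \le \Phi_\lambda(y) - \Phi_\lambda(x) \text{ for all } y \in V.
\]
The reverse implication is immediate from the definition: the numerator in the liminf is then nonnegative.

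For the forward implication, fix $y \in \domain{\Phi_\lambda}$ and set $y_s \coloneqq x + s(y-x)$ for $s \in (0,1]$. Convexity gives
\[
    \Phi_\lambda(y_s) - \Phi_\lambda(x) \le s\bigl(\Phi_\lambda(y) - \Phi_\lambda(x)\bigr).
\]
Dividing by $s\norm{y-x}$ and using the liminf condition along $y_s \to x$ shows that
\[
    \frac{\Phi_\lambda(y) - \Phi_\lambda(x) - \inner{\eta}{y-x}}{\norm{y-x}} \ge \liminf_{s \downarrow 0} \frac{\Phi_\lambda(y_s) - \Phi_\lambda(x) - \inner{\eta}{y_s-x}}{\norm{y_s-x}} \ge 0.
\]
This argument needs $\Phi_\lambda(x)$ to be finite. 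Here I would remark that the statement implicitly requires $x \in \domain{\Phi}$: the Fréchet subdifferential is only meaningful there, or is empty by convention otherwise.

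Finally I would translate back. Writing $\eta = \xi - \lambda\nabla\Psi(x)$, the inequality $\inner{\eta}{y-x} \le \Phi_\lambda(y) - \Phi_\lambda(x)$ becomes
\[
    \inner{\xi}{y-x} + \lambda\bigl(\Psi(y) - \Psi(x) - \inner{\nabla\Psi(x)}{y-x}\bigr) \le \Phi(y) - \Phi(x).
\]
The bracketed term is exactly $\bregman{y}{x}$ by \eqref{eq:bregman_div}. Points $y \notin \domain{\Phi}$ give a trivially true inequality, so restricting to $y \in \domain{\Phi}$ loses nothing.

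No step is a genuine obstacle. The only points needing care are the sum rule for Fréchet subdifferentials with a $C^1$ perturbation, and the convention that $x \in \domain{\Phi}$.
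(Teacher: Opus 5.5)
Your proposal is correct and follows the same route as the paper: pass to the convex functional $\Phi - \lambda\Psi$, use that its subdifferential at $x$ is $\partial_F\Phi(x) - \lambda\nabla\Psi(x)$, and unwind the resulting subgradient inequality into the Bregman form. The paper takes two facts for granted: the sum rule for a $C^1$ perturbation, and the coincidence of the Fr\'echet and convex subdifferentials for convex functionals, which you prove by the segment argument. You prove both, and you also correctly make explicit the implicit convention $x \in \domain{\Phi}$.
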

\begin{proof}
    \underline{$\Leftarrow$}: This direction is clear because $\Psi$ is $C^1$.

    \noindent \underline{$\Rightarrow$}: Since $F \coloneqq \Phi - \lambda \Psi$ is convex and $\partial F(x) = \partial_F \Phi(x) - \lambda \nabla\Psi(x)$, there exists a subgradient $\varphi \in \partial F(x)$ such that $\xi = \varphi + \lambda \nabla\Psi(x)$.
    Therefore, for any $y \in \domain{\Phi}$ we have 
    \[
    \inner{\xi}{y-x} = \inner{\varphi}{y - x} + \lambda \inner{\nabla\Psi(x)}{y - x}
    \le F(y) - F(x) + \lambda \inner{\nabla\Psi(x)}{y - x}.
    \]
    This completes the proof.
\end{proof}

The remaining part of this section is devoted into the study of some slopes of $\Phi$.
For each $x \in V$, we define the minimal-norm slope $\normslope{\Phi}$ by
\[
    \normslope{\Phi}(x) \coloneqq \begin{cases}
        \inf \{ \norm{\xi} \mid \xi \in \partial_F\Phi(x)\} & \text{if } \partial_F\Phi(x) \neq \emptyset\\
        +\infty & \text{otherwise}
    \end{cases},
\]
and the local slope $\slope{\Phi}$ by
\begin{equation}\label{eq:local_slope}
    \slope{\Phi}(x) \coloneqq \begin{cases}
        \limsup_{y \to x} \frac{(\Phi(x) - \Phi(y))^+}{\norm{y - x}} & \text{if } x \in \domain{\Phi}\\
        +\infty & \text{otherwise}
    \end{cases}.
\end{equation}

The local slope $\slope{\Phi}$ has the following global formula.
An analogous result holds for $(\lambda, p)$-convex functionals on metric spaces;~\cite[Proposition~2.7]{RossiSegattiStefanelli2011GlobalAttractors}.
\begin{proposition}\label{prop:lower_semicontinuity_of_slope}
    Let $\Phi$ be $\lambda$-convex for $\Psi$ and some $\lambda \in \R$.
    Then we have
    \begin{equation}\label{eq:global_formula}
        \slope{\Phi}(x) = \sup_{y \neq x} \lrbra{\frac{\Phi(x) - \Phi(y)}{\norm{y-x}} + \lambda\frac{\bregman{y}{x}}{\norm{y-x}}}^+ \text{ for each } x \in V.
    \end{equation}
    In particular, the map $x \in V \mapsto \slope{\Phi}(x)$ is lower semicontinuous.
\end{proposition}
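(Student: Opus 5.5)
We prove the two inequalities in \eqref{eq:global_formula} separately and then deduce lower semicontinuity. Throughout, fix $x \in \domain{\Phi}$; if $x \notin \domain{\Phi}$, the left-hand side is $+\infty$ by definition. We interpret the right-hand side as $+\infty$ as well, which is consistent with the convention $\Phi(x) - \Phi(y) = +\infty$ for $y \in \domain{\Phi}$. Only $y \in \domain{\Phi}$ matter in the supremum, since otherwise the quotient is $-\infty$ and its positive part vanishes.

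\textbf{The inequality ``$\le$''.} Let $y_n \to x$ be a sequence realizing the $\limsup$ in \eqref{eq:local_slope}. Since $\Psi$ is $C^1$, we have $\bregman{y_n}{x} = o(\norm{y_n - x})$. Hence
\[
\frac{\Phi(x)-\Phi(y_n)}{\norm{y_n-x}} = \frac{\Phi(x)-\Phi(y_n)}{\norm{y_n-x}} + \lambda\frac{\bregman{y_n}{x}}{\norm{y_n-x}} + o(1),
\]
and each term on the right is bounded by the supremum. Taking positive parts and the $\limsup$ gives the claim.

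\textbf{The inequality ``$\ge$''.} This is the main step and the place where $\lambda$-convexity enters. Fix $y \in \domain{\Phi}$ with $y \ne x$ and set $y_s \coloneqq x + s(y-x)$ for $s \in (0,1]$. Convexity of $\Phi - \lambda\Psi$ gives
\[
\Phi(y_s) \le (1-s)\Phi(x) + s\Phi(y) - \lambda\bigl[(1-s)\Psi(x) + s\Psi(y) - \Psi(y_s)\bigr].
\]
A direct computation shows
\[
(1-s)\Psi(x) + s\Psi(y) - \Psi(y_s) = s\,\bregman{y}{x} - \bregman{y_s}{x}.
\]
Therefore
\[
\Phi(x) - \Phi(y_s) \ge s\bigl(\Phi(x) - \Phi(y) + \lambda\bregman{y}{x}\bigr) - \lambda\bregman{y_s}{x}.
\]
Dividing by $\norm{y_s - x} = s\norm{y - x}$, and using $\bregman{y_s}{x}/s \to 0$ as $s \downarrow 0$ (again because $\Psi$ is $C^1$), we obtain
\[
\slope{\Phi}(x) \ge \limsup_{s\downarrow0}\frac{(\Phi(x)-\Phi(y_s))^+}{\norm{y_s-x}} \ge \lrbra{\frac{\Phi(x)-\Phi(y)}{\norm{y-x}} + \lambda\frac{\bregman{y}{x}}{\norm{y-x}}}^+ .
\]
Taking the supremum over $y$ proves the inequality. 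The only care needed here is to verify the identity relating the convexity gap of $\Psi$ to Bregman divergences, which follows by expanding both Bregman terms using the definition \eqref{eq:bregman_div}.

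\textbf{Lower semicontinuity.} By the formula just proved, $\slope{\Phi}$ is a supremum over $y$ of the maps
\[
x \mapsto \lrbra{\frac{\Phi(x)-\Phi(y)}{\norm{y-x}} + \lambda\frac{\bregman{y}{x}}{\norm{y-x}}}^+ .
\]
Each such map is lower semicontinuous on $\{x \ne y\}$. Indeed, $\Phi$ is lower semicontinuous; $x \mapsto \bregman{y}{x}$ is continuous because $\Psi$ and $\nabla\Psi$ are continuous; the norm is continuous; and taking positive parts preserves lower semicontinuity.

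One subtlety remains: the point $x = y$ is excluded from the supremum. To handle it, let $x_n \to x$. For any fixed $y \ne x$, we have $y \ne x_n$ for all large $n$, so
\[
\liminf_n \slope{\Phi}(x_n) \ge \text{(term at } y \text{ evaluated at } x).
\]
Taking the supremum over $y$ gives $\liminf_n \slope{\Phi}(x_n) \ge \slope{\Phi}(x)$. This also covers $x \notin \domain{\Phi}$, where the left-hand side is forced to be $+\infty$ because $\Phi(x_n) \to +\infty$ along the chosen $y$'s by lower semicontinuity. If needed, this case can instead be reduced by noting that $x_n \in \domain{\Phi}$ only matters in the nontrivial case.
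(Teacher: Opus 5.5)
Your proof is correct and follows essentially the same route as the paper. The bound $\le$ comes from $D_\Psi(y,x)=o(\norm{y-x})$ by the $C^1$ regularity of $\Psi$, and the bound $\ge$ from restricting to the segment $x+s(y-x)$, using convexity of $\Phi-\lambda\Psi$ and $D_\Psi(y_s,x)/s\to 0$; your identity $(1-s)\Psi(x)+s\Psi(y)-\Psi(y_s)=s\,D_\Psi(y,x)-D_\Psi(y_s,x)$ and the supremum-of-lower-semicontinuous-functions argument (including the excluded point $y=x$) just spell out details the paper leaves implicit.
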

\begin{proof}
    We show both inequalities in turn.

    \noindent \textbf{\underline{LHS $\le$ RHS}}: Since $\Psi$ is $C^1$, we have
    \[
        \begin{aligned}
            \slope{\Phi}(x) 
            &= \limsup_{y \to x}\lrbra{\frac{\Phi(x) - \Phi(y)}{\norm{y-x}} + \lambda\frac{\Psi(y) - \Psi(x) - \inner{\gradpsi(x)}{y - x}}{\norm{y-x}}}^+\\
            &\le \sup_{y \neq x} \lrbra{\frac{\Phi(x) - \Phi(y)}{\norm{y-x}} + \lambda\frac{\bregman{y}{x}}{\norm{y-x}}}^+.
        \end{aligned}
    \]

    \noindent \textbf{\underline{LHS $\ge$ RHS}}: We can assume that $\slope{\Phi}(x) < +\infty$.
    Let $y \neq x$ and $x_t \coloneqq (1-t) x + ty$ for $t \in [0,1]$.
    By the convexity of $\Phi - \lambda \Psi$, we have for each $y \in \domain{\Phi}$ with $y \neq x$
    \[
    \begin{aligned}    
        \frac{\Phi(x) - \Phi(y)}{\norm{y-x}} + \lambda \frac{\bregman{y}{x}}{\norm{y-x}}
        &\le \limsup_{t \downarrow 0}\lrbra{\frac{\Phi(x) - \Phi(x_t)}{\norm{x_t - x}} + \lambda \frac{\bregman{x_t}{x}}{\norm{x_t-x}}}\\
        &= \limsup_{t \downarrow 0} \frac{(\Phi(x) - \Phi(x_t))^+}{\norm{x - x_t}} \le \slope{\Phi}(x).
    \end{aligned}
    \]
    This completes the proof.
\end{proof}

The local slope $\slope{\Phi}$ coincides with the minimal norm slope $\normslope{\Phi}$.
\begin{proposition}\label{prop:slope_is_minimal_norm}
    Let $\Phi$ be $\lambda$-convex for $\Psi$ and some $\lambda$.
    Then we have
    \[
        \slope{\Phi}(x) = \normslope{\Phi}(x) \text{ for each } x \in V.
    \]
\end{proposition}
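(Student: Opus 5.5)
We prove the two inequalities $\slope{\Phi}(x) \le \normslope{\Phi}(x)$ and $\normslope{\Phi}(x) \le \slope{\Phi}(x)$ separately. We may assume $x \in \domain{\Phi}$: otherwise $\slope{\Phi}(x) = +\infty$ by definition, and $\partial_F\Phi(x) = \emptyset$, so $\normslope{\Phi}(x) = +\infty$ as well. Throughout we use the convex functional $F \coloneqq \Phi - \lambda\Psi$, which satisfies $\partial_F\Phi(x) = \partial F(x) + \lambda\nabla\Psi(x)$, as in the proof of \cref{lem:global_subgradient_formula}.

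\textbf{Step 1: $\slope{\Phi} \le \normslope{\Phi}$.} Take any $\xi \in \partial_F\Phi(x)$. By \cref{lem:global_subgradient_formula}, for every $y \in \domain{\Phi}$ with $y \neq x$,
\[
\frac{\Phi(x)-\Phi(y)}{\norm{y-x}} + \lambda\frac{\bregman{y}{x}}{\norm{y-x}} \le \frac{\inner{\xi}{x-y}}{\norm{y-x}} \le \norm{\xi}.
\]
For $y \notin \domain{\Phi}$ the left-hand side is $-\infty$, so the bound holds there too. Taking the positive part and the supremum over $y$, \cref{prop:lower_semicontinuity_of_slope} gives $\slope{\Phi}(x) \le \norm{\xi}$. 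Taking the infimum over $\xi$ gives the claim.

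\textbf{Step 2: $\normslope{\Phi} \le \slope{\Phi}$.} Assume $L \coloneqq \slope{\Phi}(x) < +\infty$. We must produce $\xi \in \partial_F\Phi(x)$ with $\norm{\xi} \le L$, or equivalently $\varphi \in \partial F(x)$ with $\norm{\varphi + \lambda\nabla\Psi(x)} \le L$. Put $\eta \coloneqq \lambda\nabla\Psi(x)$ and consider the convex function
\[
G(y) \coloneqq F(y) - F(x) + \inner{\eta}{y-x} = \Phi(y) - \Phi(x) - \lambda\bregman{y}{x}.
\]
The global formula of \cref{prop:lower_semicontinuity_of_slope} says exactly that $G(y) \ge -L\norm{y-x}$ for all $y$. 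So the convex function $G$ lies above the concave function $-L\norm{\cdot - x}$, and both vanish at $x$.

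Apply the Hahn--Banach sandwich theorem, in the form of Fenchel--Rockafellar duality or separation in $V \times \R$. Separate the epigraph of $G$ from the strict hypograph $\{(y,s) \mid s < -L\norm{y-x}\}$. The latter is convex with nonempty interior, and it is disjoint from $\mathrm{epi}\,G$. This yields a continuous affine function $a$ with
\[
-L\norm{\cdot - x} \le a \le G, \qquad a(x) = 0,
\]
say $a(y) = \inner{\zeta}{y-x}$. The left inequality forces $\norm{\zeta} \le L$. The right inequality gives $\zeta \in \partial G(x) = \partial F(x) + \eta$, hence $\zeta \in \partial_F\Phi(x)$. Therefore $\normslope{\Phi}(x) \le \norm{\zeta} \le L$.

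\textbf{Main obstacle.} The main obstacle is making this separation rigorous. One must check that the separating hyperplane is non-vertical; this follows because the hypograph has nonempty interior containing points $(x, s)$ with $s < 0$, while $G(x) = 0$. One must also check that separation is possible despite $G$ being only lower semicontinuous and possibly $+\infty$ on parts of $V$. The intended tool is the standard sandwich theorem for a convex function dominating a continuous concave one. That theorem needs continuity only of the concave side, which holds here, so I expect it to go through directly.
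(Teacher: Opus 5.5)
Your proposal is correct and is essentially the paper's proof. Step 2 is exactly the paper's Hahn--Banach separation between the convex function $G$ and the concave function $-L\norm{\cdot - x}$, and, like the paper, you then identify the separating functional as an element of $\partial_F\Phi(x)$ with norm at most $\slope{\Phi}(x)$; you additionally spell out the non-verticality and why only continuity of the concave side is needed. The only difference is in Step 1, where you bound $\slope{\Phi}(x)$ by $\norm{\xi}$ using the global formula of \cref{prop:lower_semicontinuity_of_slope} together with \cref{lem:global_subgradient_formula}, whereas the paper argues directly from the local definition of the Fr\'echet subdifferential; both routes are valid.
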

\begin{proof}
    We prove both inequalities in turn.
    
    \noindent \textbf{\underline{$\slope{\Phi} \le \normslope{\Phi}$}}: Without loss of generality, we can assume that $\partial_F\Phi(x) \neq \emptyset$.
    Let $\xi \in \partial_F\Phi(x)$.
    By the definition of the Fr\'echet subdifferential, for any $\eps > 0$, if $y \in X$ is sufficiently close to $x$, then we have
    \[
    \frac{\Phi(x) - \Phi(y)}{\norm{y-x}}\le \|\xi\|+\eps.
    \]
    This gives the conclusion.

    \noindent \textbf{\underline{$\slope{\Phi} \ge \normslope{\Phi}$}}: With no loss generality, we can assume that $\slope{\Phi}(x) < +\infty$.
    Let $F \coloneqq \Phi - \lambda \Psi$.
    By the global formula \eqref{eq:global_formula}, we have
    \[
        F(x) - F(y) + \lambda \inner{\gradpsi(x)}{x - y} \le \slope{\Phi}(x)\norm{y - x} \text{ for each } y \in V.
    \]
    Thus, by Hahn--Banach separation theorem, there exists $\xi \in \vdual$ such that
    \[
        F(x) - F(y) + \lambda \inner{\gradpsi(x)}{x - y}
        \le \inner{-\xi}{y - x}
        \le \slope{\Phi}(x)\norm{y - x} \text{ for each } y \in V
    \]
    The first inequality and \cref{lem:global_subgradient_formula} give that $\xi \in \partial_F\Phi(x)$.
    Moreover, the second inequality shows that $\norm{\xi} \le \slope{\Phi}(x)$.
    This completes the proof.
\end{proof}

\section{Properties of mirror flows}\label{sec:properties}
Let $(V, \norm{\cdot})$ be a Banach space such that its dual space $V^\ast$ has the Radon-Nikod\'ym property, and $\Phi \colon V \to (-\infty, +\infty]$ be a proper and lower semicontinuous functional.
Let $\Psi \colon V \to [0, +\infty)$ be a $C^1$-convex functional such that its convex conjugate $\Psiast$ is also $C^1$ on $\vdual$.

Throughout this section, we always assume that $\Phi$ is $\lambda$-convex for $\Psi$ and some $\lambda \in \R$.
We also impose the following assumption:
\begin{restatable}{assumption}{bregmanassumption}\label{ass:bregman_lower_inequality}
The following hold:
\begin{enumerate}
    \item there exist $\alpha > 1$ and $C_\alpha > 0$ such that
    \[
        \norm{y - x}^\alpha \le C_\alpha\bregman{y}{x} \text{ for each } y, x \in V;
    \]
    \item there exists a continuous function $g \colon (0,+\infty) \to (0, +\infty)$ such that
    \[
        g(0) \coloneqq \lim_{r \to 0} g(r) = +\infty
    \]
    and
    \[
        g(\norm{\varphi} + \norm{\psi}) \norm{\varphi - \psi}^2 \le \bregmanast{\varphi}{\psi} \text{ for each } \varphi, \psi \in V^\ast,
    \]
    where we set $\infty \cdot 0 \coloneqq 0$.
\end{enumerate}
\end{restatable}

\begin{remark}\label{rem:lp_bregman}
    Let $V$ be an $L^p$-space with $p \in [2, +\infty)$ and $\Psi(\cdot) \coloneqq \frac{1}{p}\norm{\cdot}_{L^p}^p$.
    Denoted by $q$ the H\"older conjugate of $p$.
    Then $\Psi$ and $\Psiast$ satisfy \cref{ass:bregman_lower_inequality} for $\alpha = p$, $C_\alpha = (p (p-1) 2^{p-1})^{-1}$ and $g(r) = \frac{q-1}{2}(r)^{q-2}$.
\end{remark}

\begin{remark}
    In \cref{sec:p_laplacian}, it is enough to consider $L^p$-spaces as $V$ and $\frac{1}{p}\lpnorm{\cdot}^p$ as $\Psi$.
    However, in this section we work in a more general setting.
\end{remark}

In this section, we study mirror flows, which are defined as follows.
\begin{definition}[$\Psi$-mirror flow]
    A map $x_t \colon [0, +\infty) \to V$ is said to be a \textit{$\Psi$-mirror flow} for $\Phi$ starting from $x_0 \in V$ if the map $t \in [0, +\infty) \mapsto \nabla\Psi(x_t)$ is locally absolutely continuous and satisfies the mirror inclusion
    \begin{equation}\label{eq:mirror_inclusion}
        \dt \nabla\Psi(x_t) \in - \partial_F \Phi(x_t) \text{ for } \Leb\text{-a.e. } t > 0.
    \end{equation}
\end{definition}

\begin{example}
    If $V$ is a Hilbert space with the Hilbert norm $\norm{\cdot}$ and $\Psi(\cdot) \coloneqq \frac{1}{2}\norm{\cdot}^2$, then $\Psi$-mirror flows are usual gradient flows.
    Further, in this case, the convexity notion of \cref{def:lambda_convex} is equivalent to the usual $\lambda$-convexity of $\Phi$.
\end{example}

In \cite{AUBINFRANKOWSKI2026111469}, a specific map $x_t \colon [0, +\infty) \to V$, closely related to mirror flows, is studied in a more general setting.
We quote their result in a form adapted to the present study although they treat a more general setting.
\begin{theorem}[{\cite[Theorem~2.4 and Remark~2.8]{AUBINFRANKOWSKI2026111469}}]\label{thm:equivalent_definition_of_EVI}
Let $\lambda \in \R$ and $x_t \colon [0, +\infty) \to V$ be a map.
Then, the following are equivalent:
\begin{enumerate}
    \item the map $t \in [0, +\infty) \mapsto x_t$ is continuous, $x_t \in \domain{\Phi}$ for any $t > 0$, and $x_t$ satisfies the \textit{Evolution Variational Inequality} (EVI) in the differential form:
    \begin{equation}\label{eq:evi_differential_form}
        \dt^+ \bregman{x}{x_t} + \lambda \bregman{x}{x_t} \le \Phi(x) - \Phi(x_t) \text{ for each } t \ge 0 \text{ and } x \in \domain{\Phi}.
    \end{equation}
    \item For each $x \in \domain{\Phi}$, the maps $t \mapsto \Phi(x_t)$ and $t \mapsto \bregman{x}{x_t}$ belong to $L^1_{\mathsf{loc}}((0,+\infty))$, $(s,t) \mapsto \bregman{x_s}{x_t}$ is Lebesgue measurable on $(0,+\infty)\times(0,+\infty)$, and $x_t$ satisfies the EVI in the integral form:
    \begin{equation}\label{eq:evi_integral_form}
        \bregman{x}{x_t} - \bregman{x}{x_s} + \lambda \int_s^t \bregman{x}{x_r} \dd r \le \int_s^t \Phi(x) - \Phi(x_r) \dd r \text{ for each } 0 \le s \le t.
    \end{equation}
    \item $x_t$ satisfies the EVI in the exponential integral form:
    \begin{equation}\label{eq:evi_exponential_integral_form}
        \e^{\lambda(t-s)}\bregman{x}{x_t} - \bregman{x}{x_s} \le E_\lambda(t-s) (\Phi(x) - \Phi(x_t)) \text{ for each } 0 \le s \le t \text{ and } x \in \domain{\Phi},
    \end{equation}
    where $E_\lambda(t) \coloneqq \int_0^t \e^{\lambda r} \dd r$.
\end{enumerate}
Furthermore, if $x_t$ satisfies one of the above conditions (i) - (iii), then the map $t \in [0, +\infty) \mapsto \Phi(x_t)$ is non-increasing.
\end{theorem}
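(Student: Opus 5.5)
The plan is to prove the cycle (i) $\Rightarrow$ (ii) $\Rightarrow$ (iii) $\Rightarrow$ (i), establishing along the way that $t \mapsto \Phi(x_t)$ is non-increasing. The argument mimics the metric-space theory of EVI in \cite{AGS}, with $\bregman{\cdot}{\cdot}$ in place of $\frac12\dist^2$. The only structural facts about $D_\Psi$ that I will use are:
\begin{itemize}
    \item $\bregman{x}{y} \ge 0$;
    \item $t \mapsto \bregman{x}{x_t}$ is continuous whenever $x_t$ is, because $\Psi$ is $C^1$;
    \item the three-point identity \eqref{eq:threepoint-identity}.
\end{itemize}

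\emph{(i) $\Rightarrow$ (ii).} Fix $x \in \domain{\Phi}$. The function $t \mapsto \bregman{x}{x_t}$ is continuous. Since $\Phi$ is lower semicontinuous and $x_t$ is continuous, the map $t \mapsto \Phi(x_t)$ is lower semicontinuous, hence bounded below on compact intervals.

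I first prove monotonicity of $\Phi(x_t)$. Put $x = x_s$ in \eqref{eq:evi_differential_form} and compare the right Dini derivatives of $r \mapsto \bregman{x_s}{x_r}$ at $r = s$, where it vanishes and is $\ge 0$ afterwards. Combining this with the integrated inequality along $[s,t]$ should give $\Phi(x_t) \le \Phi(x_s)$. If the direct comparison is delicate, I would argue by contradiction using the lower semicontinuity of $\Phi(x_r)$ at a first time where $\Phi(x_r) > \Phi(x_s)$.

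With monotonicity in hand, $\Phi(x_t)$ is bounded above on $[s,+\infty)$ by $\Phi(x_s)$ for every $s > 0$. It is also Borel as a lower semicontinuous function, so it lies in $L^1_{\mathsf{loc}}((0,+\infty))$. Measurability of $(s,t) \mapsto \bregman{x_s}{x_t}$ follows from joint continuity. The integral form \eqref{eq:evi_integral_form} then follows from the classical fact that a continuous function whose upper right Dini derivative is bounded by a function $h$ satisfies $u(t) - u(s) \le \int_s^t h$. This fact applies here because $h(r) = \Phi(x) - \Phi(x_r) - \lambda \bregman{x}{x_r}$ is upper semicontinuous and locally bounded above.

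\emph{(ii) $\Rightarrow$ (iii).} From the integral form, $u(r) = \bregman{x}{x_r}$ satisfies $u' + \lambda u \le \Phi(x) - \Phi(x_r)$ in the sense of distributions. I again need monotonicity of $\Phi(x_r)$ from (ii). I would obtain it by integrating \eqref{eq:evi_integral_form} with $x = x_r$ over $r$ in a small interval and using Fubini, which is why joint measurability is assumed, together with $D_\Psi \ge 0$. This is the standard averaging trick.

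Then multiply by $\e^{\lambda r}$, so that $(\e^{\lambda r} u)' \le \e^{\lambda r}(\Phi(x) - \Phi(x_r))$. Integrate from $s$ to $t$ and use $\Phi(x_r) \ge \Phi(x_t)$ for $r \le t$ to obtain \eqref{eq:evi_exponential_integral_form}. The boundary value at $s = 0$ requires a limit $s \downarrow 0$, using continuity of $u$.

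\emph{(iii) $\Rightarrow$ (i).} Taking $x = x_s$ in \eqref{eq:evi_exponential_integral_form} gives
\[
\e^{\lambda h}\bregman{x_s}{x_{s+h}} \le E_\lambda(h)\bigl(\Phi(x_s) - \Phi(x_{s+h})\bigr).
\]
This shows both $\Phi(x_{s+h}) \le \Phi(x_s)$, which is the monotonicity, and $\bregman{x_s}{x_{s+h}} \to 0$. The convergence to zero holds because the right-hand side is $O(h)$ once $\Phi$ is bounded below along the curve, which lower semicontinuity gives locally.

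Turning $\bregman{x_s}{x_{s+h}} \to 0$ into norm continuity is the main obstacle. In our setting I would use \cref{ass:bregman_lower_inequality}(i), which turns Bregman smallness into norm smallness. In the general setting of \cite{AUBINFRANKOWSKI2026111469}, continuity is presumably part of the hypotheses.

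Finally, subtract $\bregman{x}{x_s}$, divide \eqref{eq:evi_exponential_integral_form} by $t-s$ and take $\limsup_{t\downarrow s}$. Using $E_\lambda(h)/h \to 1$, $(\e^{\lambda h} - 1)/h \to \lambda$, continuity of $\bregman{x}{x_t}$, and $\liminf_{t\downarrow s}\Phi(x_t) \ge \Phi(x_s)$ from lower semicontinuity, this yields \eqref{eq:evi_differential_form}.
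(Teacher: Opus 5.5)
The paper does not prove this statement; it imports it from \cite[Theorem~2.4 and Remark~2.8]{AUBINFRANKOWSKI2026111469}, so your cycle has to stand on its own. Its skeleton is the standard one from metric EVI theory \cite{MURATORI2020108347}, and the steps you carry out for a \emph{continuous} curve are sound. These are: the integration of the Dini inequality in (i)$\Rightarrow$(ii), which works because an upper semicontinuous right-hand side bounded above is a decreasing limit of continuous functions; the integrating factor $\e^{\lambda r}$ in (ii)$\Rightarrow$(iii); and the $\limsup_{t\downarrow s}$ in (iii)$\Rightarrow$(i).

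The genuine gap is that (ii) and (iii) contain no continuity hypothesis, yet your arguments use one. In (ii)$\Rightarrow$(iii) you appeal to ``continuity of $u$'' as $s\downarrow0$, and the Fubini averaging trick only gives monotonicity in an averaged sense; upgrading it to every $t$ needs lower semicontinuity of $t\mapsto\Phi(x_t)$. In (iii)$\Rightarrow$(i) you bound $\Phi(x_{s+h})$ from below ``by lower semicontinuity'', which presupposes the continuity you are trying to prove. Converting Bregman smallness into norm smallness via \cref{ass:bregman_lower_inequality}(i) is the easy part.

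On $(0,+\infty)$ the circularity can be removed. Testing (iii) with $s=0$ and a fixed $x\in\domain{\Phi}$ gives $E_\lambda(t)\Phi(x_t)\le E_\lambda(t)\Phi(x)+\bregman{x}{x_0}$, so $x_t\in\domain{\Phi}$ for $t>0$. The monotonicity you correctly extract from (iii) with $x=x_s$ then gives, for $0<s_0\le s\le t\le T$,
\[
    \bregman{x_s}{x_t}\le \e^{-\lambda(t-s)}E_\lambda(t-s)\bigl(\Phi(x_{s_0})-\Phi(x_T)\bigr),
\]
where $\Phi(x_T)>-\infty$ holds simply because $\Phi$ takes values in $(-\infty,+\infty]$. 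Together with \cref{ass:bregman_lower_inequality}(i), this yields left \emph{and} right continuity at every $t>0$; your estimate with $s$ fixed and $h\downarrow0$ only gives right continuity. From (ii), an analogous argument (testing with $x=x_r$ for suitable $r\uparrow t$) must likewise come before any use of lower semicontinuity.

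At $t=0$ no argument can work. There $x_0$ need not lie in $\domain{\Phi}$, so it is not an admissible test point, and in fact (ii)$\Rightarrow$(i) and (iii)$\Rightarrow$(i) are false as stated. Take any admissible $(V,\Psi)$, the convex indicator $\Phi\coloneqq\delta_{\{0\}}$, $x_0\coloneqq z\neq0$, and $x_t\coloneqq0$ for $t>0$. The only test point is $x=0$, and for $t>0$ both \eqref{eq:evi_integral_form} and \eqref{eq:evi_exponential_integral_form} reduce to $-\bregman{0}{x_s}\le0$. Yet $t\mapsto x_t$ is discontinuous at $0$.

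So (ii) and (iii) need an extra hypothesis, e.g.\ continuity at $0$ or $x_0\in\overline{\domain{\Phi}}$. The latter suffices under \cref{ass:bregman_lower_inequality}(i): (iii) together with $\Phi(x_t)\ge\Phi(x_1)$ for $t\le1$ gives $\limsup_{t\downarrow0}\bregman{x}{x_t}\le\bregman{x}{x_0}$ for all $x\in\domain{\Phi}$. Your remark that continuity is ``presumably part of the hypotheses'' is the right diagnosis, but it must be stated as an assumption rather than left inside the proof. The paper itself is unaffected, since it only applies the theorem to curves $x_t=\nabla\Psi^\ast(\varphi_t)$ with $\varphi_t$ locally absolutely continuous.

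Finally, your monotonicity argument under (i) needs a correction. The set where $\Phi(x_r)>\Phi(x_s)$ is open, so it has no ``first time''. Instead, test the $\e^{\lambda r}$-weighted integrated inequality with $x=x_s$ to get $\int_s^t\e^{\lambda(r-s)}\Phi(x_r)\,\dd r\le E_\lambda(t-s)\Phi(x_s)$. If $\Phi(x_b)>\Phi(x_a)$, apply this at the \emph{last} $s^\ast\in[a,b)$ with $\Phi(x_{s^\ast})\le\Phi(x_a)$, which exists because this is a closed condition. Beyond $s^\ast$ one has $\Phi(x_r)>\Phi(x_{s^\ast})$ on all of $(s^\ast,b]$, contradicting the averaged bound at $s^\ast$.
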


\begin{definition}[{\cite[Definition~2.7]{AUBINFRANKOWSKI2026111469}}]\label{def:lambda_evi_solution}
    Let $\lambda \in \R$.
    A map $x_t \colon [0, +\infty) \to V$ satisfying \eqref{eq:evi_exponential_integral_form} in \cref{thm:equivalent_definition_of_EVI} is said to be a \textit{$\lambda$-EVI solution} for the pair $(D_\Psi, \Phi)$.
\end{definition}

\begin{example}
    Let $V$ be a Hilbert space and $\Psi(\cdot) \coloneqq \frac{1}{2}\norm{\cdot}^2$.
    Then, it holds that $\bregman{x}{y} = \frac{1}{2}\|x - y\|^2$.
    Therefore, the $\lambda$-EVI notion in the sense of \cref{thm:equivalent_definition_of_EVI} coincides with the usual $\lambda$-EVI notion with respect to the squared distance: see for instance~\cite{AGS,MURATORI2020108347}.
\end{example}
It is well known that, when $V$ is a Hilbert space and $\Psi(\cdot) = \frac{1}{2}\lpnorm{\cdot}^2$, the usual gradient flow is equivalent to a $\lambda$-EVI solution.
A similar result holds for $\Psi$-mirror flows.
\begin{theorem}\label{thm:mirror_flow_equivalent_evi_solution}
    Let $x_t \colon [0, +\infty) \to V$ be a map.
    Then the following are equivalent:
    \begin{enumerate}
        \item $x_t$ is a $\Psi$-mirror flow for $\Phi$;
        \item $x_t$ is a $\lambda$-EVI solution for $(D_\Psi, \Phi)$ and the map $t \in [0, +\infty) \mapsto \gradpsi(x_t)$ is locally absolutely continuous.
    \end{enumerate}
    In particular, if $x_t$ is a $\Psi$-mirror flow for $\Phi$, then the map $t \in [0, +\infty) \mapsto \Phi(x_t)$ is non-increasing.
\end{theorem}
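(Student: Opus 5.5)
The plan is to prove (i)$\Rightarrow$(ii) and (ii)$\Rightarrow$(i) separately, in both directions by differentiating the Bregman divergence along the curve. The basic computation is the following. Suppose $t\mapsto\varphi_t\coloneqq\gradpsi(x_t)$ is locally absolutely continuous in $\vdual$. Since $\Psiast$ is $C^1$, $\gradpsi$ is invertible with inverse $\nabla\Psiast$, so $x_t=\nabla\Psiast(\varphi_t)$. Moreover $\Psi(x_t)=\inner{\varphi_t}{x_t}-\Psiast(\varphi_t)$. For fixed $x\in\domain{\Phi}$ this gives
\[
    \bregman{x}{x_t}=\Psi(x)+\Psiast(\varphi_t)-\inner{\varphi_t}{x}.
\]
The right-hand side is a $C^1$ function composed with an absolutely continuous curve, so it is locally absolutely continuous. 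By the chain rule (valid since $\vdual$ has the Radon--Nikod\'ym property, so $\dot\varphi_t$ exists a.e.) we obtain
\[
    \dt\bregman{x}{x_t}=\inner{\dot\varphi_t}{\nabla\Psiast(\varphi_t)-x}=\inner{\dot\varphi_t}{x_t-x}\quad\text{for a.e. } t.
\]
This identity is the key step for both directions.

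\textbf{(i)$\Rightarrow$(ii).} At a.e.\ $t$ we have $\xi\coloneqq-\dot\varphi_t\in\partial_F\Phi(x_t)$. Applying \cref{lem:global_subgradient_formula} with base point $x_t$ and test point $x$ gives
\[
    \dt\bregman{x}{x_t}=\inner{\xi}{x-x_t}\le\Phi(x)-\Phi(x_t)-\lambda\bregman{x}{x_t}.
\]
Next I integrate this on $[s,t]$ to get the integral form \eqref{eq:evi_integral_form}. The measurability and local integrability hypotheses of item (2) in \cref{thm:equivalent_definition_of_EVI} need checking, and this is where I expect the main obstacle. I would handle it as follows. $x_t$ is continuous because $\nabla\Psiast$ is continuous, so $(s,t)\mapsto\bregman{x_s}{x_t}$ is continuous, hence measurable. $\bregman{x}{x_t}$ is continuous. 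For $\Phi(x_t)$, the bound $\Phi(x_t)\le\Phi(x)-\dt\bregman{x}{x_t}-\lambda\bregman{x}{x_t}$ gives a local upper bound in $L^1$. For a lower bound, take any $\eta\in\partial_F\Phi(x_{t_0})$ at a differentiability point, or use lower semicontinuity of $\Phi$ along the continuous curve, which gives local boundedness from below on compacts. Also $\Phi(x_t)$ is l.s.c.\ in $t$, hence measurable. Note that $x_t\in\domain{\Phi}$ only a.e.\ a priori; the integral form together with the equivalence then yields the exponential form \eqref{eq:evi_exponential_integral_form}, so $x_t$ is a $\lambda$-EVI solution.

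\textbf{(ii)$\Rightarrow$(i).} By \cref{thm:equivalent_definition_of_EVI} the differential form \eqref{eq:evi_differential_form} holds, and $x_t\in\domain{\Phi}$. Fix $t$ at which $\dot\varphi_t$ exists. Then the right derivative coincides with the derivative computed above, uniformly in $x$, so
\[
    \inner{-\dot\varphi_t}{x-x_t}+\lambda\bregman{x}{x_t}\le\Phi(x)-\Phi(x_t)\quad\text{for every } x\in\domain{\Phi}.
\]
By \cref{lem:global_subgradient_formula} (direction $\Leftarrow$), this means $-\dot\varphi_t\in\partial_F\Phi(x_t)$, which is the mirror inclusion \eqref{eq:mirror_inclusion}.

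Finally, the monotonicity of $t\mapsto\Phi(x_t)$ follows directly from the last assertion of \cref{thm:equivalent_definition_of_EVI}, once (i)$\Rightarrow$(ii) is established.
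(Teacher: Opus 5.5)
Your proposal is correct and follows the paper's proof essentially step for step. The shared ingredients are:
\begin{itemize}
\item the identity $\dt\bregman{x}{x_t}=\inner{-\dt\gradpsi(x_t)}{x-x_t}$, which the paper gets from dual symmetry and the chain rule, exactly as in your expansion $\Psi(x)+\Psiast(\varphi_t)-\inner{\varphi_t}{x}$;
\item \cref{lem:global_subgradient_formula}, used in each direction;
\item integration together with lower semicontinuity of $t\mapsto\Phi(x_t)$ to get the $L^1_{\mathsf{loc}}$ requirement;
\item the last assertion of \cref{thm:equivalent_definition_of_EVI} for monotonicity.
\end{itemize}
You also make explicit two points the paper leaves implicit: the local lower bound on $\Phi(x_t)$, and the fact that in (ii)$\Rightarrow$(i) the exceptional null set is independent of $x$.
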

\begin{proof}
\textbf{\underline{(i) $\Rightarrow$ (ii)}}: Let $x \in \domain{\Phi}$.
Note that the map $t \in [0,+\infty) \mapsto \bregman{x}{x_t}$ is locally absolutely continuous.
For any $t > 0$ such that $\dt \gradpsi(x_t) \in - \partial_F\Phi(x_t)$, the chain rule and the dual symmetry~\eqref{eq:bregman_duality} give
\begin{equation}\label{eq:derivative_of_bregman}
 \dt \bregman{x}{x_t} = \dt \bregmanast{\gradpsi(x_t)}{\gradpsi(x)} = \inner{-\dt\gradpsi(x_t)}{x - x_t}.
\end{equation}
Combining this with \cref{lem:global_subgradient_formula} shows
\[
\dt \bregman{x}{x_t} + \lambda \bregman{x}{x_t} \le \Phi(x) - \Phi(x_t) \text{ for } \Leb\text{-a.e. } t > 0.
\]
By integrating the above inequality, we have \eqref{eq:evi_integral_form}:
\[
    \bregman{x}{x_t} - \bregman{x}{x_s} + \lambda \int_s^t \bregman{x}{x_r} \dd r \le \int_s^t \Phi(x) - \Phi(x_r) \dd r \text{ for each } 0 \le s \le t.
\]
Since $\int_s^t \Phi(x_r) \dd r <+\infty$ and $t \mapsto \Phi(x_t)$ is lower semicontinuous, the map $t \mapsto \Phi(x_t) $ belongs to $ L^1_{\mathsf{loc}}((0,+\infty))$.
It is clear that the remaining parts of (ii) in \cref{thm:equivalent_definition_of_EVI} hold.
Thus, $x_t$ is a $\lambda$-EVI solution for $(D_\Psi, \Phi)$.

\noindent \textbf{\underline{(ii) $\Rightarrow$ (i)}}: This implication immediately follows from \eqref{eq:evi_differential_form}, \eqref{eq:derivative_of_bregman} and \cref{lem:global_subgradient_formula}.
This completes the proof.
\end{proof}

Thanks to \cref{thm:mirror_flow_equivalent_evi_solution}, we obtain the following energy dissipation inequality.
We now restate \cref{thm:energy_dissipation_introduction} in a more detailed form:
\begin{theorem}[Energy dissipation inequality]\label{thm:energy_dissipation}
    Let $x_t \colon [0, +\infty) \to V$ be a $\Psi$-mirror flow for $\Phi$.
    Assume that \cref{ass:bregman_lower_inequality} holds.
    Then we have
    \[
    \begin{aligned}
        - \dt \Phi(x_t)
        \ge \limsup_{h \downarrow 0} \frac{D_\Psi(x_{t-h}, x_t)}{h^2} 
        \ge g(2\norm{\gradpsi(x_t)}) \norm{\dt \gradpsi(x_t)}^2
        \ge g(2\norm{\gradpsi(x_t)}) \slope{\Phi}^2(x_t),
    \end{aligned}
    \]
    for $\Leb$-a.e. $t>0$, where $g$ is the continuous function in \cref{ass:bregman_lower_inequality}.
\end{theorem}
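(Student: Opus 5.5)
Fix a time $t>0$ at which three things hold at once: $\Phi(x_r)$ is differentiable in $r$ at $r=t$ (this is a.e., since $r\mapsto\Phi(x_r)$ is non-increasing by \cref{thm:mirror_flow_equivalent_evi_solution}), $\gradpsi(x_r)$ is differentiable at $t$, and the mirror inclusion \eqref{eq:mirror_inclusion} holds at $t$. The three inequalities will be proved from left to right.

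\textbf{First inequality.} Apply the integral EVI \eqref{eq:evi_integral_form} on $[t-h,t]$ with test point $x=x_{t-h}\in\domain{\Phi}$:
\[
\bregman{x_{t-h}}{x_t} - 0 + \lambda\int_{t-h}^t \bregman{x_{t-h}}{x_r}\dd r \le \int_{t-h}^t \bigl(\Phi(x_{t-h})-\Phi(x_r)\bigr)\dd r .
\]
By monotonicity, $\Phi(x_{t-h})-\Phi(x_r)\le \Phi(x_{t-h})-\Phi(x_t)$, so the right-hand side is at most $h\bigl(\Phi(x_{t-h})-\Phi(x_t)\bigr)$. For the $\lambda$-term we use the dual symmetry \eqref{eq:bregman_duality}: $\bregman{x_{t-h}}{x_r}=\bregmanast{\gradpsi(x_r)}{\gradpsi(x_{t-h})}$. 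Since $\Psiast$ is $C^1$ and $\gradpsi(x_\cdot)$ is absolutely continuous, this equals $o(\norm{\gradpsi(x_r)-\gradpsi(x_{t-h})})=o(h)$ uniformly for $r\in[t-h,t]$, using local Lipschitz behaviour at a differentiability point, or at least an $L^1$ average bound. Hence the $\lambda$-integral is $o(h^2)$. Dividing by $h^2$ and letting $h\downarrow0$ gives
\[
\limsup_{h\downarrow 0} \bregman{x_{t-h}}{x_t}/h^2 \le -\dt\Phi(x_t).
\]
The main obstacle is making this $o(h^2)$ rigorous. I would write $\bregmanast{\varphi}{\psi}=\int_0^1\inner{\nabla\Psiast(\psi+s(\varphi-\psi))-\nabla\Psiast(\psi)}{\varphi-\psi}\dd s$ and use continuity of $\nabla\Psiast$ together with $\norm{\gradpsi(x_r)-\gradpsi(x_{t-h})}\le\int_{t-h}^t\norm{\dt\gradpsi(x_u)}\dd u$. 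At a Lebesgue point of $\norm{\dt\gradpsi(x_\cdot)}$ this integral is $O(h)$, which suffices.

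\textbf{Second inequality.} By dual symmetry, $\bregman{x_{t-h}}{x_t}=\bregmanast{\gradpsi(x_t)}{\gradpsi(x_{t-h})}$. \cref{ass:bregman_lower_inequality}(ii) bounds this below by
\[
g\bigl(\norm{\gradpsi(x_t)}+\norm{\gradpsi(x_{t-h})}\bigr)\,\norm{\gradpsi(x_t)-\gradpsi(x_{t-h})}^2 .
\]
Divide by $h^2$ and let $h\downarrow 0$. Continuity of $g$ and of $\gradpsi(x_\cdot)$ make the first factor tend to $g(2\norm{\gradpsi(x_t)})$, while the difference quotient tends to $\norm{\dt\gradpsi(x_t)}^2$. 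If $\gradpsi(x_t)=0$, then $g=+\infty$, and the convention $\infty\cdot0=0$ together with the first inequality still yields the claim, since a finite left side forces the derivative to vanish. The case $\dt\gradpsi(x_t)=0$ is trivial.

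\textbf{Third inequality.} The mirror inclusion gives $-\dt\gradpsi(x_t)\in\partial_F\Phi(x_t)$. Therefore $\norm{\dt\gradpsi(x_t)}\ge\normslope{\Phi}(x_t)=\slope{\Phi}(x_t)$ by \cref{prop:slope_is_minimal_norm}, which is the last inequality.
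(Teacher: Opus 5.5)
Your proof is correct and follows the paper's argument. The second and third inequalities are exactly the paper's: the dual symmetry \eqref{eq:bregman_duality} with (ii) of \cref{ass:bregman_lower_inequality}, followed by \cref{prop:slope_is_minimal_norm}.

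The only difference is in the first inequality. The paper plugs $s=t-h$ and $x=x_{t-h}$ into the exponential form \eqref{eq:evi_exponential_integral_form}, which gives $\e^{\lambda h}\bregman{x_{t-h}}{x_t}\le E_\lambda(h)\bigl(\Phi(x_{t-h})-\Phi(x_t)\bigr)$ directly. This absorbs the $\lambda$-term, so your $o(h^2)$ estimate via continuity of $\nabla\Psiast$ is not needed, although it is valid.
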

\begin{proof}
    We show each inequality in turn.
    Note that $\Phi(x_t)$ is differentiable excluding at most countable set because of \cref{thm:mirror_flow_equivalent_evi_solution}.
    Therefore, it suffices to check that both inequalities hold for any $t > 0$ at which $\dt \Phi(x_t)$ exists and the inclusion $\dt \gradpsi(x_t) \in - \partial_F \Phi(x_t)$ holds.
    Let $t > 0$ be any such point.

    \noindent \textbf{\underline{First inequality}}: By \cref{thm:mirror_flow_equivalent_evi_solution}, $x_t$ is a $\lambda$-EVI solution for $(D_\Psi, \Phi)$; in particular, $x_t$ satisfies the EVI in the exponential integral form \eqref{eq:evi_exponential_integral_form}.
    Inserting $x_s \coloneqq x_{t-h}$ and $x \coloneqq x_{t-h}$ gives
    \[
        \e^{\lambda h} \bregman{x_{t-h}}{x_t} \le E_\lambda(h)( \Phi(x_{t-h}) - \Phi(x_t)).
    \]
    Dividing both sides by $h^2$ and taking limsup w.r.t. $h \downarrow 0$ gives the first inequality.

    \noindent \textbf{\underline{Second and third inequalities}}: By \eqref{eq:bregman_duality} and \cref{ass:bregman_lower_inequality}, we have
    \[
        \bregman{x_{t-h}}{x_t}
        \ge g(\norm{\gradpsi(x_{t})} + \norm{\gradpsi(x_{t-h})}) \norm{\gradpsi(x_t) - \gradpsi(x_{t-h})}^2 \text{ for any } h > 0.
    \]
    Thus, dividing both sides by $h^2$ and taking limsup w.r.t.~$h\downarrow 0$, \cref{prop:slope_is_minimal_norm} and the continuity of $g$ give the second inequality.
    \cref{prop:slope_is_minimal_norm} completes the proof.
\end{proof}

\section{Existence of mirror flows by minimizing movements with Bregman divergence}\label{sec:existence}
Let $(V, \norm{\cdot})$ be a Banach space such that its dual space $V^\ast$ has the Radon-Nikod\'ym propety, and $\Psi \colon V \to [0, +\infty)$ be a convex $C^1$-functional such that its convex conjugate $\Psiast$ is also $C^1$ on $V^\ast$.
Let $\Phi \colon V \to (-\infty, +\infty]$ be a proper, lower semicontinuous and $\lambda$-convex functional for $\Psi$ and some $\lambda \in \R$.

In this section, we prove \cref{thm:existence_introduction}, which is restated in a more precise form as~\cref{thm:existence_MF}.
For the reader's convenience, we collect the assumptions used in this section. 
While all of them are imposed in the proof of the existence of $\Psi$-mirror flows, each auxiliary proposition and lemma below is stated only under the assumptions needed for its proof.

\bregmanassumption*

\begin{assumption}\label{ass:psi_norm_bounds}
    There exist $\beta > 0, C_\beta > 0$ and $m_\beta \ge 0$ such that
    \[
        \norm{\gradpsi(x)} \le C_\beta \norm{x}^\beta + m_\beta \text{ for each } x \in V.
    \]
\end{assumption}

\begin{assumption}\label{ass:bounded_from_below}
    It holds that
    \[
        m_0 \coloneqq \inf_{x \in V} \Phi(x) > -\infty
    \]
\end{assumption}

\begin{assumption}\label{ass:bounded_sublevel_compact}
    $(V, \Phi)$ has bounded sublevel compact sets, i.e., for each $r>0$ and $c \in \R$ the set $\overline{B_r} \cap \{\Phi \le c\}$ is compact, where $\overline{B}_r$ is the closed ball in $V$ with radius $r > 0$ and center $0$.
\end{assumption}

To show the existence of $\Psi$-mirror flows, we adopt De Giorgi's minimizing movement scheme~\cite{DeGiorgi1993NewProblems}.
As discussed in~\cite{AGS}, in the approach to usual gradient flows via minimizing movements, the variational scheme is formulated using the squared distance.
In the present setting, we replace the distance term by the $\Psi$-Bregman divergence.

For each $x, y \in V$ and $\tau > 0$, we define
\[
    \begin{aligned}
        \Phi_\Psi(y; x, \tau) & \coloneqq \Phi(y) + \frac1\tau \bregman{y}{x},\\
        \Phi^\tau_\Psi(x) & \coloneqq \inf_{y \in V} \Phi_\Psi(y; x, \tau),
    \end{aligned}
\]
and 
\[
    \resolvent{x} \coloneqq \argmin_{y\in V} \Phi_\Psi(y; x, \tau) \coloneqq \{y \in V \mid \Phi_\Psi(y; x, \tau) = \Phi^\tau_\Psi(x)\}.
\]
For each $x_0 \in V$ and $\tau > 0$, we define a sequence $\{x^\tau_n\}_{n \ge 1} \subset V$ to be that
\begin{equation}\label{eq:minimizing_sequence}
    x^\tau_n \in \resolvent{x^\tau_{n-1}},\ x^\tau_0 \coloneqq x_0,
\end{equation}
and then for this sequence $\{x^\tau_n\}_{n\ge1}$, we define the piecewise constant map $\xbar{\tau}{t} \colon [0, +\infty) \to V$ by
\begin{equation}\label{eq:interpolated_map}
    \xbar{\tau}{t} \coloneqq x^\tau_n \text{ if  } t \in ((n-1)\tau, n\tau],\ \xbar{\tau}{0} \coloneqq x^\tau_0.
\end{equation}
The existence of $\Psi$-mirror flows is obtained by passing to the limit in $\xbar{\tau}{t}$ as $\tau \to 0$.
\begin{remark}
    In general, $\resolvent{x}$ is not single-valued.
    However, none of the arguments in this section depends on the choice of an element in $J_\tau[x]$.
\end{remark}

\begin{remark}
    By a simple calculation, we can see that
    \begin{equation}\label{eq:discrete_differential_inclusion}
        0 \in \partial_F\Phi(x^\tau_n) + \frac{\nabla\Psi(x^\tau_n) - \nabla\Psi(x^\tau_{n-1})}{\tau} \text{ for each } n \ge 1.
    \end{equation}    
\end{remark}

\begin{remark}
    Under (i) of \cref{ass:bregman_lower_inequality}, \cref{ass:bounded_from_below,ass:bounded_sublevel_compact}, it holds that for any $x \in V$ and $\tau \in (0, +\infty)$
    \[
        \resolvent{x} \neq \emptyset.
    \]
\end{remark}

\subsection{A priori estimates}\label{sec:apriori_estimate}
Throughout this section, we assume the existence of a sequence $\{x^\tau_n\}_{n\ge1}$ given by \eqref{eq:minimizing_sequence}, hence that of $\xbar{\tau}{t}$ given by \eqref{eq:interpolated_map}, and focus on their properties.

First, we obtain the discrete version of EVI in the differential form: recall \eqref{eq:evi_differential_form}.
\begin{proposition}\label{prop:discrete_EVI_differential}
    If the sequence $\{x^\tau_n\}_{n\ge1}$ as in \eqref{eq:minimizing_sequence} exists, then it holds that
    \[
        \frac{\bregman{x}{\xtaun} - \bregman{x}{\xtaunminusone}}{\tau} + \lambda \bregman{x}{\xtaun} \le \Phi(x) - \Phi(\xtaun) \text{ for each } n \ge1, x \in \domain{\Phi}.
    \]
\end{proposition}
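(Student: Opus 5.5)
The plan is to combine the discrete Euler--Lagrange inclusion \eqref{eq:discrete_differential_inclusion} with the global subgradient characterization of \cref{lem:global_subgradient_formula}, and then to rewrite the resulting pairing through the three-point identity \eqref{eq:threepoint-identity}. This is exactly the discrete analogue of the computation \eqref{eq:derivative_of_bregman} used in the proof of \cref{thm:mirror_flow_equivalent_evi_solution}.

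\textbf{Step 1: the subgradient.} Fix $n \ge 1$ and $x \in \domain{\Phi}$. By \eqref{eq:discrete_differential_inclusion},
\[
    \xi \coloneqq \frac{\gradpsi(\xtaunminusone) - \gradpsi(\xtaun)}{\tau} \in \partial_F\Phi(\xtaun).
\]
If one wants to justify this inclusion directly rather than cite the remark, it follows from minimality. Since $\xtaun$ minimizes $y \mapsto \Phi(y) + \frac1\tau\bregman{y}{\xtaunminusone}$, and the map $y\mapsto \bregman{y}{\xtaunminusone}$ is $C^1$ with derivative $\gradpsi(y)-\gradpsi(\xtaunminusone)$, the first-order expansion shows that $-\frac1\tau(\gradpsi(\xtaun)-\gradpsi(\xtaunminusone))$ satisfies the Fr\'echet subdifferential liminf condition at $\xtaun$. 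Note also that $\xtaun \in \domain{\Phi}$, since $\Phi$ is proper and $\xtaun$ is a minimizer.

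\textbf{Step 2: the global inequality.} \cref{lem:global_subgradient_formula} then gives
\[
    \frac1\tau\inner{\gradpsi(\xtaunminusone) - \gradpsi(\xtaun)}{x - \xtaun} + \lambda\bregman{x}{\xtaun} \le \Phi(x) - \Phi(\xtaun).
\]

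\textbf{Step 3: the three-point identity.} Apply \eqref{eq:threepoint-identity} with $y = \xtaunminusone$ and $z = \xtaun$:
\[
    \bregman{x}{\xtaunminusone} = \bregman{x}{\xtaun} + \bregman{\xtaun}{\xtaunminusone} + \inner{\gradpsi(\xtaun) - \gradpsi(\xtaunminusone)}{x - \xtaun}.
\]
Hence
\[
    \inner{\gradpsi(\xtaunminusone) - \gradpsi(\xtaun)}{x - \xtaun} = \bregman{x}{\xtaun} - \bregman{x}{\xtaunminusone} + \bregman{\xtaun}{\xtaunminusone}.
\]
Since $\bregman{\xtaun}{\xtaunminusone} \ge 0$ by convexity of $\Psi$, the left-hand side is at least $\bregman{x}{\xtaun} - \bregman{x}{\xtaunminusone}$. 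Dividing by $\tau$ and substituting into Step 2 yields the claimed inequality. The dropped term $\frac1\tau\bregman{\xtaun}{\xtaunminusone}$ is worth recording, since it will likely be useful for later estimates.

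\textbf{Main obstacle.} The only delicate point is Step 1 in the direction needed: passing from the local Fr\'echet condition to the global inequality. This is handled entirely by \cref{lem:global_subgradient_formula}, which relies on the convexity of $\Phi - \lambda\Psi$. Everything else is algebra.

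As an alternative route that avoids subdifferentials altogether, one could compare $\Phi_\Psi(\xtaun;\xtaunminusone,\tau)$ with $\Phi_\Psi(x_s;\xtaunminusone,\tau)$ along the segment $x_s = (1-s)\xtaun + sx$. One would then use $\lambda$-convexity, divide by $s$, and let $s\downarrow 0$; this recovers Step 2 in the same way.
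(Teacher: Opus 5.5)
Your proof is correct and is essentially the paper's argument. The paper also sets $\xi_n^\tau = -\frac{1}{\tau}\bigl(\gradpsi(\xtaun) - \gradpsi(\xtaunminusone)\bigr)$, uses the three-point identity to get $\bregman{x}{\xtaun} - \bregman{x}{\xtaunminusone} \le \tau\inner{\xi_n^\tau}{x - \xtaun}$ (discarding the nonnegative term $\bregman{\xtaun}{\xtaunminusone}$), and then applies \cref{lem:global_subgradient_formula}; your derivation of the discrete inclusion from minimality just fills in the ``simple calculation'' the paper leaves to the reader.
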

\begin{proof}
    The three-point identity \eqref{eq:threepoint-identity} and \cref{lem:global_subgradient_formula} give
    \[
        \begin{aligned}
        \bregman{x}{\xtaun} - \bregman{x}{\xtaunminusone}
        \le \tau \inner{\xi_n^\tau}{x - \xtaun}
        \le  \tau(\Phi(x) - \Phi(\xtaun)) - \lambda \tau \bregman{x}{\xtaun},
        \end{aligned}
    \]
    where $\xi_n^\tau \coloneqq -\frac{\gradpsi(\xtaun) - \gradpsi(\xtaunminusone)}{\tau}$.
    This completes the proof.
\end{proof}

We define
    \[
        \lambda^\tau \coloneqq \frac{\log(1 + \lambda\tau)}{\tau} \text{ for each } \tau < \frac{1}{\lambda^-},
    \]
and for each $t > 0$ define $n_t^\tau \ge 1$ to be that $t \in I^\tau_{n^\tau_t} \coloneqq ((n_t^\tau - 1)\tau, n_t^\tau \tau]$.
Note that the map $\tau \mapsto \lambda^\tau$ is non-increasing.

We also obtain the discrete version of EVI in the exponential integral form.
\begin{proposition}\label{prop:discrete_EVI_exponential}
    If the sequence $\{x^\tau_n\}_{n\ge1}$ as in \eqref{eq:minimizing_sequence} exists, then it holds that
    for each $0 \le m \le n$ and $x \in \domain{\Phi}$
    \[
        \e^{\lambda^\tau (n - m)\tau}\bregman{x}{\xtaun} - \bregman{x}{x^\tau_m} \le \int_{m\tau}^{n\tau} \e^{\lambda^\tau(n^\tau_r -1 -m)\tau}(\Phi(x) - \Phi(\xbar{\tau}{r})) \dd r.
    \]
\end{proposition}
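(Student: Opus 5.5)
Starting from \cref{prop:discrete_EVI_differential}, I rewrite the one-step inequality as
\[
    (1+\lambda\tau)\bregman{x}{x^\tau_k} \le \bregman{x}{x^\tau_{k-1}} + \tau(\Phi(x) - \Phi(x^\tau_k)) \text{ for each } k \ge 1,
\]
and note that $1+\lambda\tau = \e^{\lambda^\tau\tau}$ by the definition of $\lambda^\tau$. The hypothesis $\tau<1/\lambda^-$ makes $1+\lambda\tau>0$, so multiplying by a positive factor preserves the inequality.

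\textbf{Step 1.} Multiply the $k$-th inequality by $\e^{\lambda^\tau(k-1-m)\tau}$. This gives
\[
    \e^{\lambda^\tau(k-m)\tau}\bregman{x}{x^\tau_k} - \e^{\lambda^\tau(k-1-m)\tau}\bregman{x}{x^\tau_{k-1}} \le \tau\,\e^{\lambda^\tau(k-1-m)\tau}(\Phi(x)-\Phi(x^\tau_k)).
\]

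\textbf{Step 2.} Sum over $k=m+1,\dots,n$. The left-hand sides telescope to
\[
    \e^{\lambda^\tau(n-m)\tau}\bregman{x}{x^\tau_n} - \bregman{x}{x^\tau_m},
\]
which is exactly the left side of the claim. The case $m=n$ is trivial, since both sides vanish.

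\textbf{Step 3.} Rewrite the sum on the right as an integral. For $r\in((k-1)\tau,k\tau]$ we have $n^\tau_r=k$ and $\xbar{\tau}{r}=x^\tau_k$ by \eqref{eq:interpolated_map}. Hence
\[
    \tau\,\e^{\lambda^\tau(k-1-m)\tau}(\Phi(x)-\Phi(x^\tau_k)) = \int_{(k-1)\tau}^{k\tau}\e^{\lambda^\tau(n^\tau_r-1-m)\tau}(\Phi(x)-\Phi(\xbar{\tau}{r}))\dd r.
\]
Summing these integrals over $k$ yields the integral from $m\tau$ to $n\tau$ in the statement.

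The argument is routine. The only point needing care is Step 1: the factor $\e^{\lambda^\tau(k-1-m)\tau}$ must be positive, which it always is, and $1+\lambda\tau$ must be positive, which is where $\tau<1/\lambda^-$ enters. One should also check that $\Phi(x^\tau_k)$ is finite. This holds because the minimizers lie in $\domain{\Phi}$ once $x\in\domain{\Phi}$, so the minimum value is finite, and therefore no $\infty-\infty$ expressions appear in the summation.
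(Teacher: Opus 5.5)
Your proof is correct and is essentially the paper's argument. The paper unrolls the one-step inequality of \cref{prop:discrete_EVI_differential}, namely $(1+\lambda\tau)\bregman{x}{x^\tau_k}\le\bregman{x}{x^\tau_{k-1}}+\tau(\Phi(x)-\Phi(x^\tau_k))$ with $1+\lambda\tau=\e^{\lambda^\tau\tau}$, and then rewrites the weighted sum as an integral via the piecewise-constant interpolant, which is exactly your integrating-factor telescoping in Steps 1--3 (your finiteness remark for $\Phi(x^\tau_k)$, using $x$ as a competitor, is a detail the paper leaves implicit).
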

\begin{proof}
    Iterating \cref{prop:discrete_EVI_differential}, we have
    \[
        \begin{aligned}
            \bregman{x}{\xtaun} 
            & \le \e^{-\lambda^\tau(n-m)\tau}\bregman{x}{x^\tau_m} + \sum_{i=m+1}^n \e^{-\lambda^\tau(n + 1 - i)\tau}\int_{I_i^\tau} (\Phi(x) - \Phi(\xbar{\tau}{r}))\dd r \\
            & = \e^{-\lambda^\tau(n-m)\tau}\bregman{x}{x^\tau_m} + \int_{m\tau}^{n\tau} \e^{-\lambda^\tau(n + 1 - n^\tau_r)\tau} (\Phi(x) - \Phi(\xbar{\tau}{r}))\dd r
        \end{aligned}
    \]
    This completes the proof.
\end{proof}

The next result immediately follows from that $\lambda^\tau \ge 0$ if $\lambda \ge 0$ and $\lambda^\tau < 0$ if $\lambda < 0$.
\begin{corollary}\label{cor:discrete_sequence_bound_from_minimizer}
    If $\{x^\tau_n\}_{n\ge1}$ as in \eqref{eq:minimizing_sequence} exists and \cref{ass:bounded_from_below} holds, then we have
    \[
        \bregman{x}{\xtaun} \le \e^{-\lambda^\tau n\tau}\bregman{x}{x^\tau_0} + n\tau \e^{(\lambda^\tau)^- n \tau}|\Phi(x) - m_0|,
    \]
    for each $ n \ge 1$ and $x \in \domain{\Phi}$, where $m_0$ is as in \cref{ass:bounded_from_below}.
\end{corollary}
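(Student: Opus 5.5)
We take \cref{prop:discrete_EVI_exponential} with $m = 0$ and split into the cases $\lambda \ge 0$ and $\lambda < 0$, using only the sign of $\lambda^\tau$ and the lower bound $\Phi(\xbar{\tau}{r}) \ge m_0$ from \cref{ass:bounded_from_below}. With $m=0$ the proposition reads
\[
    \e^{\lambda^\tau n\tau}\bregman{x}{\xtaun} \le \bregman{x}{x^\tau_0} + \int_0^{n\tau} \e^{\lambda^\tau(n^\tau_r - 1)\tau}\bigl(\Phi(x) - \Phi(\xbar{\tau}{r})\bigr) \dd r .
\]
Since $\Phi(\xbar{\tau}{r}) \ge m_0$, the integrand is bounded above by $\e^{\lambda^\tau(n^\tau_r-1)\tau}(\Phi(x)-m_0)$. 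This is in turn bounded by $\e^{\lambda^\tau(n^\tau_r-1)\tau}|\Phi(x)-m_0|$, because the exponential weight is positive.

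Next we multiply the inequality by $\e^{-\lambda^\tau n\tau}$. The first term becomes $\e^{-\lambda^\tau n\tau}\bregman{x}{x^\tau_0}$, as claimed. The weight in the integral becomes $\e^{\lambda^\tau(n^\tau_r - 1 - n)\tau}$, and we need to bound it by $\e^{(\lambda^\tau)^- n\tau}$ for $r \in (0, n\tau]$. Note that $1 \le n^\tau_r \le n$ on this interval, so the exponent $(n^\tau_r - 1 - n)\tau$ lies in $[-n\tau, -\tau]$.

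\textbf{Case $\lambda \ge 0$.} Here $\lambda^\tau \ge 0$, so the weight is at most $1 = \e^{(\lambda^\tau)^- n\tau}$.

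\textbf{Case $\lambda < 0$.} Here $\lambda^\tau < 0$, so the weight is $\e^{|\lambda^\tau|(n+1-n^\tau_r)\tau}$. Since $n+1-n^\tau_r \le n$, this is at most $\e^{(\lambda^\tau)^- n\tau}$.

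In both cases the integral is at most $n\tau\,\e^{(\lambda^\tau)^- n\tau}|\Phi(x)-m_0|$, which gives the corollary.

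The only point needing care is the index bookkeeping for $n^\tau_r$ on $(0,n\tau]$; the rest is immediate. We also assume $\tau < 1/\lambda^-$, so that $\lambda^\tau$ is defined.
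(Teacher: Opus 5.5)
Your proposal is correct and follows the paper's route: the paper obtains the corollary in one line from \cref{prop:discrete_EVI_exponential} with $m=0$, the lower bound $\Phi \ge m_0$, and the fact that $\lambda^\tau$ has the same sign as $\lambda$. You additionally write out the index bookkeeping giving $\e^{\lambda^\tau(n^\tau_r-1-n)\tau}\le \e^{(\lambda^\tau)^- n\tau}$ on $(0,n\tau]$, which the paper leaves implicit.
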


As the last of this section, we study the family of maps $\{\xbar{\tau}{t}\}_{0 < \tau \ll 1}.$
We define $|D^\tau_t| \coloneqq \frac{\norm{\gradpsi(\xtaun) - \gradpsi(\xtaunminusone)}}{\tau}$ if $t \in I_n^\tau$ and $|D^\tau_t| \coloneqq 0$ if $t = 0$.

\begin{lemma}\label{lem:apriori_boounds}
    Assume that \cref{ass:bregman_lower_inequality,ass:psi_norm_bounds,ass:bounded_from_below} hold.
    Let $y_0 \in \domain{\Phi}$, $\tau_0 \in (0, +\infty)$ and $T > 0$.
    Then there exists a constant $C = C(\lambda, \alpha, \beta, m_0, T, \tau_0, y_0, x_0) > 0$ such that
    \[
        \sup_{t \in [0, T]}\Phi(\xbartau{t}) \le C,\ 
        \sup_{t \in [0, T]} \norm{\xbartau{t}} \le C \text{ and }
        \int_{0}^{T} |D^\tau_r|^2 \dd r \le C
    \]
    for any $\tau \le \tau_0$ and $\xbar{\tau}{t}$ given by \eqref{eq:interpolated_map}, where $x_0$ is the initial point of $\xbar{\tau}{t}$ and $m_0$ is as in \cref{ass:bounded_from_below}.
\end{lemma}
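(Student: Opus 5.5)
The three bounds will be derived in order: first the norm bound, then the energy bound, and finally the $L^2$ bound on $|D^\tau|$, which comes from a discrete energy identity.

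\textbf{Norm bound.} Apply \cref{cor:discrete_sequence_bound_from_minimizer} with the fixed point $x = y_0 \in \domain{\Phi}$. Since $\tau \le \tau_0$ and $\tau \mapsto \lambda^\tau$ is monotone, the exponents $\lambda^\tau n\tau$ and $(\lambda^\tau)^- n\tau$ with $n\tau \le T+\tau_0$ are bounded uniformly. This gives $\bregman{y_0}{\xtaun} \le C_1$, with $C_1$ depending on $\bregman{y_0}{x_0}$, $\Phi(y_0)$, $m_0$, $\lambda$, $T$ and $\tau_0$. Part (i) of \cref{ass:bregman_lower_inequality} then yields
\[
\norm{y_0 - \xtaun}^\alpha \le C_\alpha C_1,
\]
so $\sup_{t\le T}\norm{\xbartau{t}} \le C$. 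For this step I will need $\tau_0 < 1/\lambda^-$ when $\lambda<0$. If the statement allows larger $\tau_0$, the range $\tau \in [1/(2\lambda^-), \tau_0]$ can be handled by a separate crude argument, or simply absorbed by shrinking $\tau_0$.

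\textbf{Energy bound.} Use the minimality of $\xtaun$ against the competitor $y = \xtaunminusone$:
\[
\Phi(\xtaun) + \tfrac1\tau\bregman{\xtaun}{\xtaunminusone} \le \Phi(\xtaunminusone).
\]
Hence $\Phi(\xtaun)$ is non-increasing, so $\Phi(\xbartau{t}) \le \Phi(x_0)$. This requires $x_0 \in \domain{\Phi}$, which the paper implicitly assumes. If $x_0$ were not in $\domain{\Phi}$, one would instead use the discrete EVI at $x = y_0$ on the first step.

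\textbf{Dissipation bound.} Summing the same inequality telescopically gives
\[
\sum_{n\tau\le T'}\tfrac1\tau\bregman{\xtaun}{\xtaunminusone} \le \Phi(x_0)-m_0,
\]
using \cref{ass:bounded_from_below}. By dual symmetry \eqref{eq:bregman_duality} and part (ii) of \cref{ass:bregman_lower_inequality},
\[
\bregman{\xtaun}{\xtaunminusone} = \bregmanast{\gradpsi(\xtaunminusone)}{\gradpsi(\xtaun)} \ge g\bigl(\norm{\gradpsi(\xtaun)}+\norm{\gradpsi(\xtaunminusone)}\bigr)\,\norm{\gradpsi(\xtaun)-\gradpsi(\xtaunminusone)}^2 .
\]
By \cref{ass:psi_norm_bounds} and the norm bound, $\norm{\gradpsi(\xtaun)} \le C_\beta C^\beta + m_\beta =: R$. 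Since $g$ is continuous and positive on $(0,2R]$ with $g(0^+)=+\infty$, we get $\inf_{(0,2R]} g =: g_R > 0$. The degenerate case, where the argument of $g$ is zero, forces $\gradpsi(\xtaun)=\gradpsi(\xtaunminusone)=0$, so it is harmless. Therefore
\[
\int_0^T |D^\tau_r|^2\dd r = \sum \tau\cdot\frac{\norm{\gradpsi(\xtaun)-\gradpsi(\xtaunminusone)}^2}{\tau^2} \le g_R^{-1}\sum \frac1\tau \bregman{\xtaun}{\xtaunminusone} \le g_R^{-1}(\Phi(x_0)-m_0).
\]

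\textbf{Main obstacle.} The delicate point is the uniformity of the constants in $\tau$, in particular controlling $\e^{(\lambda^\tau)^- n\tau}$ when $\lambda<0$. Since $\lambda^\tau \to -\infty$ as $\tau \uparrow 1/|\lambda|$, this requires $\tau_0$ to be bounded away from $1/\lambda^-$. I would first try to state the lemma for $\tau_0 < 1/(2\lambda^-)$, where $|\lambda^\tau| \le 2\log 2\,|\lambda|$, making all constants explicit.
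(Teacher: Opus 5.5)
Your proposal is correct and follows essentially the same route as the paper: monotonicity of $\Phi$ along the scheme gives the energy bound; \cref{cor:discrete_sequence_bound_from_minimizer} with $x=y_0$, together with (i) of \cref{ass:bregman_lower_inequality}, gives the norm bound; and \cref{ass:psi_norm_bounds}, dual symmetry, (ii) of \cref{ass:bregman_lower_inequality} with a positive lower bound for $g$, and the telescoped one-step inequality give the $L^2$ bound on $|D^\tau|$. Your caveat that $\tau_0<1/\lambda^-$ is needed is apt, since the paper's constant involves $\lambda^{\tau_0}$ and so implicitly assumes it; your aside about $x_0\notin\domain{\Phi}$ is moot, because the claimed bound at $t=0$ already forces $\Phi(x_0)<+\infty$.
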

\begin{proof}
    Since $t \mapsto \Phi(\xbartau{t})$ is non-increasing, we have
    \[
        \sup_{t \in [0, T]} \Phi(\xbartau{t}) \le \Phi(x_0) < +\infty.
    \]
    Using (i) of \cref{ass:bregman_lower_inequality} and inserting $x \coloneqq y_0$ in \cref{cor:discrete_sequence_bound_from_minimizer}, for each $t \in [0, T]$ we have
    \[
    \norm{\xbartau{t}}^\alpha \lesssim_{\alpha} \norm{y_0}^\alpha + \bregman{y_0}{\xbartau{t}}
    \le \norm{y_0}^\alpha + \e^{(\lambda^{\tau_0})^-(T + \tau_0)} \lrbra{\bregman{y_0}{x_0} + (T+\tau_0)|\Phi(y_0) - m_0|},
    \]
    where $\lesssim_\alpha$ means the inequality holds up to a constant depending only on $\alpha$ as in \cref{ass:bregman_lower_inequality}.
    Combining this inequality with \cref{ass:psi_norm_bounds} gives
    \[
        \sup_{t \in [0, T]} \norm{\gradpsi(\xbartau{t})} \le C \coloneqq C(\lambda, \alpha, \beta, m_0, T, \tau_0, y_0, x_0).
    \]
    Therefore, by the continuity of $g$ as in \cref{ass:bregman_lower_inequality}, there exists $M > 0$ depending only on the same constants as $C$ such that
    \[
        \inf_{n \le n_T^\tau} g(\norm{\gradpsi(\xtaun)} + \norm{\gradpsi(\xtaunminusone)}) \ge M.
    \]

    From (ii) of \cref{ass:bregman_lower_inequality}, we have
    \[
        \begin{aligned}
            \int_{0}^{T} |D^\tau_r|^2 \dd r 
            &\le \int_0^{n_T^\tau \tau} |D_r^\tau|^2 \dd r
            \le \sum_{n = 1}^{n_T^\tau} g^{-1}(\norm{\gradpsi(\xtaun)} + \norm{\gradpsi(\xtaunminusone)}) \frac{\bregmanast{\gradpsi(\xtaunminusone)}{\gradpsi(\xtaun)}}{\tau}\\
            & \le M \sum_{n = 1}^{n_T^\tau} \frac{\bregman{\xtaun}{\xtaunminusone}}{\tau}
            \le M \sum_{n = 1}^{n_T^\tau} \Phi(\xtaunminusone) - \Phi(\xtaun)
            \le M (\Phi(x_0) - m_0),
        \end{aligned}
    \]
    where we use \eqref{eq:bregman_duality} in the third inequality.
    This completes the proof.
\end{proof}

\subsection{Proof of existence}\label{sec:proof_existence}
To show the existence of $\Psi$-mirror flows, we use a refined version of Ascoli-Arzel\'a's theorem: see~\cite[Proposition~3.3.1]{AGS}.

\begin{lemma}[Ascoli-Arzel\'a's theorem]\label{lem:refined_ascoli_arzela}
    Let $T > 0$, $K \subset V$ be a compact set and $\{x^n_t\}_{n \ge 1}$ be a family of maps $x^n_t \colon [0, T] \to V$.
    If there exists  a symmetric continuous function $A \colon [0, T] \times [0, T] \to [0, +\infty)$ such that $x_t^n \in K$ for each $t \in [0,T]$ and $n \ge 1$, and 
    \[
            \limsup_{n \to + \infty} \norm{x_t^n - x_s^n} \le A(t, s) \text{ for each } 0 \le s \le t \le T,
    \]
    then there exist a continuous map $x_t \colon [0, T] \to V$ and a subsequence $\{x_t^{n(k)}\}_{k\ge1}$ such that $x_t^{n(k)} \to x_t$ for each $t \in [0, T]$.
\end{lemma}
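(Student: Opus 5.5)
This is the refined Arzelà--Ascoli theorem of \cite[Proposition~3.3.1]{AGS}. I would reprove it by the classical diagonal argument, using the compactness of $K$ for pointwise subsequential limits and the asymptotic modulus $A$ in place of equicontinuity.

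\textbf{Step 1 (convergence on a dense set).} Fix a countable dense set $D\subset[0,T]$ containing $0$ and $T$, for instance $D=\mathbb{Q}\cap[0,T]\cup\{T\}$. For each $t\in D$ the sequence $\{x^n_t\}_n$ lies in the compact set $K$, so it has a convergent subsequence. A Cantor diagonal extraction then yields one subsequence $n(k)$ and points $x_t\in K$ such that $x^{n(k)}_t\to x_t$ for every $t\in D$.

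\textbf{Step 2 (uniform continuity on $D$).} For $s,t\in D$ I would pass to the limit along $n(k)$ and use the hypothesis, which still holds along subsequences since the $\limsup$ only decreases:
\[
\norm{x_t-x_s}=\lim_k\norm{x^{n(k)}_t-x^{n(k)}_s}\le A(t,s).
\]
The key observation is that $A(t,t)=0$. Indeed, taking $s=t$ in the hypothesis gives $A(t,t)\ge 0$ trivially, but that is the wrong direction, so the hypothesis alone does not force $A(t,t)=0$. I will take $A(t,t)=0$ as implicit in the statement, as it is in \cite{AGS}. Since $A$ is continuous on the compact square, it is uniformly continuous, so $\norm{x_t-x_s}\le\omega(|t-s|)$ on $D$ for some modulus $\omega$ with $\omega(0^+)=0$. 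Hence $t\mapsto x_t$ is uniformly continuous on $D$. Because $V$ is complete, or simply because $K$ is compact, it extends uniquely to a continuous map $x\colon[0,T]\to K$.

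\textbf{Step 3 (convergence at every $t$).} Fix $t\in[0,T]$ and $\eps>0$, and choose $s\in D$ with $A(t,s)<\eps$ and $\norm{x_s-x_t}<\eps$. Then
\[
\limsup_k\norm{x^{n(k)}_t-x_t}\le\limsup_k\norm{x^{n(k)}_t-x^{n(k)}_s}+\limsup_k\norm{x^{n(k)}_s-x_s}+\norm{x_s-x_t}\le A(t,s)+0+\eps<2\eps.
\]
The inequality $\limsup_k\norm{x^{n(k)}_t-x^{n(k)}_s}\le A(t,s)$ holds for either ordering of $s$ and $t$ because $A$ is symmetric. Letting $\eps\downarrow0$ gives $x^{n(k)}_t\to x_t$.

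\textbf{Main obstacle.} The only delicate point is that the hypothesis bounds a $\limsup$ over $n$ rather than giving equicontinuity for each fixed $n$. Consequently the argument must never use continuity of any individual $x^n$, which is essential here because the intended applications are the piecewise constant interpolants $\xbar{\tau}{t}$, which are not continuous. The limsup bound suffices precisely because it is applied only after passing to limits along the extracted subsequence, and it is used in Step 3 in exactly that way. The diagonal argument plus the vanishing of $A$ on the diagonal then closes the proof.
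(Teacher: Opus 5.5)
Your argument is correct. It is the standard diagonal-extraction proof behind \cite[Proposition~3.3.1]{AGS}, which the paper cites without reproducing; here the exceptional set is empty, so no further extraction is needed off the dense set. You are also right that the statement as written omits the hypothesis $A(t,t)=0$, and that hypothesis is genuinely necessary: for $A\equiv 1$ and $x^n_t \coloneqq 0$ for $t<T/2$, $x^n_t \coloneqq e$ for $t\ge T/2$ with $\norm{e}=1$, the only pointwise limit is discontinuous. It holds in both places the paper applies the lemma, since there $A(s,t)$ is the absolute value of $\int_s^t$ of a locally integrable function and so vanishes on the diagonal.
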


\begin{theorem}\label{thm:existence_MF}
    Assume that \cref{ass:bregman_lower_inequality,ass:psi_norm_bounds,ass:bounded_from_below,ass:bounded_sublevel_compact} hold.
    Let $x_0 \in \domain{\Phi}$.
    Then there exists a $\Psi$-mirror flow $x_t \colon [0, +\infty) \to V$ for $\Phi$ starting from $x_0$.
\end{theorem}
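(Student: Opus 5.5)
We pass to the limit $\tau \downarrow 0$ in the piecewise constant interpolants $\xbartau{t}$ from \eqref{eq:interpolated_map}, working on the dual side, i.e.\ with $\gradpsi(\xbartau{t})$, where the a priori estimates are naturally formulated. Fix $T>0$ and $y_0 \in \domain{\Phi}$. By \cref{lem:apriori_boounds}, for all $\tau \le \tau_0$ the points $\xbartau{t}$, $t\in[0,T]$, lie in $\overline{B}_C \cap \{\Phi \le C\}$, which is compact by \cref{ass:bounded_sublevel_compact}. Since $\gradpsi$ is continuous, the image $K^\ast$ of this set under $\gradpsi$ is compact in $V^\ast$. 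Moreover, for $s<t$,
\[
\norm{\gradpsi(\xbartau{t}) - \gradpsi(\xbartau{s})} \le \int_{(n^\tau_s-1)\tau}^{n^\tau_t\tau} |D^\tau_r|\dd r \le C^{1/2}\bigl(t-s+2\tau\bigr)^{1/2}.
\]
So \cref{lem:refined_ascoli_arzela} applies in $V^\ast$ with $A(t,s)=C^{1/2}|t-s|^{1/2}$. It gives a sequence $\tau_k\downarrow0$ and a continuous curve $\varphi_t$ with $\gradpsi(\xbar{\tau_k}{t})\to\varphi_t$ pointwise. Setting $x_t \coloneqq \nabla\Psiast(\varphi_t)$ and using continuity of $\nabla\Psiast$ together with $\nabla\Psiast\circ\gradpsi=\mathrm{id}$, we get $\xbar{\tau_k}{t}\to x_t$ in $V$. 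A diagonal argument over $T\in\N$ extends this to $[0,+\infty)$.

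Next we show $t\mapsto\gradpsi(x_t)=\varphi_t$ is locally absolutely continuous. The discrete derivatives $\frac{\gradpsi(\xtaun)-\gradpsi(\xtaunminusone)}{\tau}$, viewed as piecewise constant maps in $L^2(0,T;V^\ast)$, have norms $|D^\tau_t|$ bounded in $L^2(0,T)$. Lower semicontinuity of the norm then gives
\[
\norm{\varphi_t-\varphi_s}\le \liminf_k \int_s^t |D^{\tau_k}_r|\dd r \le \int_s^t A(r)\dd r,
\]
where $A$ is a weak $L^2$-limit of $|D^{\tau_k}_\cdot|$. Hence $\varphi$ is absolutely continuous, and by the Radon--Nikod\'ym property of $V^\ast$ it is differentiable a.e.

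To identify the limit equation we use the EVI characterization, \cref{thm:mirror_flow_equivalent_evi_solution}: it suffices to show that $x_t$ is a $\lambda$-EVI solution. Start from \cref{prop:discrete_EVI_exponential} with $m=n^\tau_s$ and $n=n^\tau_t$, and pass to the limit along $\tau_k$. On the left, $\bregman{x}{\xbar{\tau}{t}}$ converges, since $D_\Psi(x,\cdot)$ is continuous (it involves $\Psi$ and $\gradpsi$, both continuous) and $\xbar{\tau_k}{t}\to x_t$ strongly; also $\lambda^\tau\to\lambda$. On the right, $-\Phi(\xbar{\tau}{r})$ is handled by lower semicontinuity of $\Phi$ and Fatou's lemma, which is legitimate because $\Phi\ge m_0$ and the values are bounded above. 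This yields
\[
\e^{\lambda(t-s)}\bregman{x}{x_t}-\bregman{x}{x_s}\le \int_s^t \e^{\lambda(r-s)}\bigl(\Phi(x)-\Phi(x_r)\bigr)\dd r.
\]
This is an integral-exponential form, not literally \eqref{eq:evi_exponential_integral_form}. To get the latter, divide by $t-s$ and let $t\downarrow s$ to obtain the differential form \eqref{eq:evi_differential_form}; lower semicontinuity of $t\mapsto\Phi(x_t)$ controls the right-hand side. Alternatively, derive the integral form \eqref{eq:evi_integral_form} directly by the same limiting procedure applied to summed versions of \cref{prop:discrete_EVI_differential}. The equivalence in \cref{thm:equivalent_definition_of_EVI} then gives that $x_t$ is a $\lambda$-EVI solution. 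Finally, $x_t\in\domain{\Phi}$ for every $t$ by lower semicontinuity and the uniform bound $\Phi(\xbartau{t})\le\Phi(x_0)$, and $x_0$ is the correct initial point because $\xbartau{0}=x_0$.

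The main obstacle is the passage to the limit in the EVI. The Fatou step needs care, since only lower semicontinuity of $\Phi$ is available and $\Phi(\xbar{\tau}{r})$ appears with a negative sign. Boundedness from below (\cref{ass:bounded_from_below}) together with the uniform upper bound from \cref{lem:apriori_boounds} should make Fatou's lemma applicable. The other delicate point is getting strong convergence of $x^{\tau}$ from convergence of the dual variables; we obtain it from compactness via \cref{ass:bounded_sublevel_compact} combined with continuity of $\nabla\Psiast$.
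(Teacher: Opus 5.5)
Your proposal is correct and follows the paper's proof. The paper likewise uses minimizing movements with the Bregman divergence, Ascoli--Arzel\`a for $\nabla\Psi(\bar{x}^{\tau}_t)$ in $V^\ast$ (your Cauchy--Schwarz modulus $C^{1/2}|t-s|^{1/2}$ is a harmless variant of the paper's weak-limit bound), $x_t \coloneqq \nabla\Psi^\ast(\varphi_t)$, a limit in \cref{prop:discrete_EVI_exponential}, and \cref{thm:mirror_flow_equivalent_evi_solution}. The only real difference is in the limit passage: since $n \mapsto \Phi(x^\tau_n)$ is non-increasing, $\Phi(\bar{x}^\tau_r) \ge \Phi(\bar{x}^\tau_t)$ on the integration range, so the right-hand side is bounded by $\bigl(\Phi(x) - \Phi(\bar{x}^\tau_t)\bigr)$ times the integral of the weights, and lower semicontinuity then gives \eqref{eq:evi_exponential_integral_form} directly; your reverse-Fatou argument and the extra differentiation step are valid but unnecessary.
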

\begin{proof}
    We fix a sufficiently small $\tau_0 > 0$.
    \cref{lem:apriori_boounds} shows that $\{|D^\tau_t|\}_{0 < \tau \le \tau_0} \subset L^2([0,T])$ is bounded for any $T > 0$.
    Thus, by the diagonal argument, there exist a subsequence $\{|D^{\tau_k}_t|\}_{k\ge1}$ and $D \in L^2_{\mathsf{loc}}([0, +\infty))$ such that $\tau_k \to 0$ and $|D^{\tau_k}_t|$ weakly converges to $D$ in $L^2([0, T])$ for any $T > 0$.

    Let $T > 0$.
    For each $0 \le s \le t \le T$, we have
    \[
    \begin{aligned}
        \norm{\gradpsi(\xbar{\tau_k}{t}) - \gradpsi(\xbar{\tau_k}{s})}
        = \norm{\sum_{n = n^{\tau_k}_s + 1}^{n^{\tau_k}_t} \gradpsi(x^{\tau_k}_n) - \gradpsi(x^{\tau_k}_{n-1})}
        \le \int_{n_s^{\tau_k}\tau_k}^{n_t^{\tau_k}\tau_k} |D^{\tau_k}_r| \dd r.
    \end{aligned}
    \]
    By taking limsup with respect to $k\to+\infty$, we obtain
    \begin{equation}\label{eq:limsup_bound_for_AC_loc}
        \limsup_{k\to+\infty}\norm{\gradpsi(\xbar{\tau_k}{t}) - \gradpsi(\xbar{\tau_k}{s})} \le \int_s^t D_r \dd r \text{ for each } 0\le s\le t\le T.
    \end{equation}
    Let $C > 0$ be as in \cref{lem:apriori_boounds}, $\widetilde{K} \coloneqq \{\Phi \le C\} \cap \overline{B_C(0)}$ and $K \coloneqq \gradpsi(\widetilde{K})$.
    Then it holds that $K$ is compact.
    Thus, by \cref{lem:refined_ascoli_arzela} and the diagonal argument, after passing to a subsequence and relabeling, there exists a continuous map $\varphi_t \colon [0, +\infty) \to V^\ast$ such that $\gradpsi(\xbar{\tau_k}{t}) \to \varphi_t$ for each $t \in [0, +\infty)$.
    Moreover, \eqref{eq:limsup_bound_for_AC_loc} implies that $\varphi_t$ is locally absolutely continuous.
    
    We set $x_t \coloneqq \gradpsi^\ast(\varphi_t)$ for each $t \in [0, +\infty)$.
    By the continuity of $\nabla\Psi^\ast$, for each $t \ge 0$ we have $\lim_{k \to +\infty} x^{\tau_k}_t = x_t$.
    For any $0 \le s \le t $, by inserting $n = n^{\tau_k}_t, m = n^{\tau_k}_s$ in \cref{prop:discrete_EVI_exponential} and taking limsup as $k \to +\infty$, we obtain
    \[
        \e^{\lambda(t - s)} \bregman{x}{x_t} - \bregman{x}{x_s} \le (\Phi(x) - \Phi(x_t)) \int_s^t \e^{\lambda(r - s)}\dd r \text{ for each } x \in \domain{\Phi}.
    \]
    By \cref{thm:mirror_flow_equivalent_evi_solution} we get the conclusion.
\end{proof}

\section{\texorpdfstring{$p$}{p}-Laplacian eigenvalue problem on metric measure spaces}\label{sec:p_laplacian}
Let $(X, \dist)$ be a metric space with a Borel measure $\m$ with $\m(X) < +\infty$.
We collectively denote them by $(X, \dist, \m)$.
Let $p \in (1, +\infty)$, $q$ be the H\"older conjugate of $p$ and $\Psi(\cdot) \coloneqq \frac{1}{p}\lpnorm{\cdot}^p$ for the $L^p$-norm $\lpnorm{\cdot}$ of $\Lp$. 
Denoted by $\sphere$ the unit sphere in $\Lp$ centered at $0$ and by $J_p$ the $p$-duality map on $\Lp$, i.e., $J_p(\cdot) = \partial \Psi$.

In this section, we study the eigenvalue problem for the $p$-Laplace operator on $(X, \dist, \m)$ for the case $p \ge 2$.
The proofs of \cref{thm:min_max_eigenvalue_introduction,thm:approximation_eigenvalue_introduction} are given in \cref{sec:ljusternik_schnirelman,sec:approximation_eigenvalues}, respectively.
These results are established by using mirror flows studied in the previous sections.
While the main theorems concern the case $p \ge 2$, the $p$-Laplacian and the associated eigenvalue problem can be formulated for the whole range $p \in (1, +\infty)$.

\begin{remark}
    Throughout this section, our discussion is restricted to the Neumann eigenvalue problem.
    However, the Dirichlet case can be treated in the same way.
\end{remark}
\subsection{Introduction to \texorpdfstring{$p$}{p}-Laplacian and its eigenvalues on metric measure spaces}
We start by introducing the following functional, whose subgradient is related to the $p$-Laplacian.
The $p$-Cheeger energy $\chp \colon \Lp \to [0, +\infty]$ is defined to be that 
\begin{equation}\label{def:p_cheeger_energy}
    \chp(f) \coloneqq \begin{cases}
        \frac1p \int_X \relaxedslope{f}^p \dd \m & \text{if } \relaxedslope{f} \text{exists }\\
        +\infty & \text{otherwise}
    \end{cases},
\end{equation}
where $\relaxedslope{f}$ is the $p$-minimal relaxed slope.
The definition of $\relaxedslope{f}$ is given in, for instance, \cite[Definition 4.2]{density_lipschitz} (there is merely a notational difference: \cite{density_lipschitz} uses $q$, while we use $p$).

\begin{remark}
    There are several equivalent definitions of $p$-Cheeger energy: see~\cite[Theorem~7.1 and Proposition~5.6]{metricsobolev}.
    In particular, $\relaxedslope{f}$ is the minimal $L^p$-norm element among all functions called \textit{(lip)-relaxed $p$-upper slopes} in \cite[Definition~5.9]{metricsobolev}.
\end{remark}

We summarize some properties of $\chp$.
These facts are well known to experts; we refer interested readers to
\cite[Subsection~4.1]{calculus-heatflow}, and point out that similar arguments carry over to the case $p\in(1,+\infty)$ without assuming completeness
or separability of $(X,\dist)$.
\begin{proposition}\label{prop:properties_of_chp}
    The following hold:
    \begin{enumerate}
        \item the functional $\chp$ is convex and lower semicontinuous on $\Lp$.
        \item for any $f \in \Lp$ and constant function $c$ on $X$, one has
            \[
                \chp(c + f) = \chp(f).
            \]
        \item the functional $\chp$ is $p$-homogeneous, i.e., for any $a \in \R$ and $f \in \domain{\chp}$ one has
            \[
                \chp(af) = |a|^p\chp(f).
            \]
        \item for any $f, g \in \domain{\chp}$ and non-decreasing $C^1$-function $\phi \colon \R \to \R$ with $0 \le \phi' \le 1$, one has
            \[
                \chp(f - \phi(f - g)) + \chp(g + \phi(f - g)) \le \chp(f) + \chp(g).
            \]
    \end{enumerate}
\end{proposition}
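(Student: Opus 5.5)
We take the Cheeger energy through its relaxation definition: the infimum of $\liminf_n \frac1p\int_X \mathrm{lip}(f_n)^p\,\dd\m$ over bounded Lipschitz $f_n\to f$ in $\Lp$. Equivalently, via the minimal relaxed slope $\relaxedslope{f}$, it is the $L^p$-lower semicontinuous envelope of $f\mapsto\frac1p\int_X \mathrm{lip}(f)^p\,\dd\m$ on Lipschitz functions. The four items then follow the scheme of \cite[Subsection~4.1]{calculus-heatflow}. First we verify each property for the pre-relaxed functional on Lipschitz functions using pointwise properties of the local Lipschitz constant $\mathrm{lip}$. Then we pass to the relaxation by choosing optimal approximating sequences.

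For (i), lower semicontinuity is automatic, since a relaxation is an $L^p$-lower semicontinuous envelope; a diagonal argument makes this precise. For convexity, pick optimal sequences $f_n\to f$ and $g_n\to g$. The sublinearity $\mathrm{lip}(tf_n+(1-t)g_n)\le t\,\mathrm{lip}(f_n)+(1-t)\mathrm{lip}(g_n)$ together with convexity of $r\mapsto r^p$ gives the bound along $tf_n+(1-t)g_n$. For (ii), $\mathrm{lip}(c+f_n)=\mathrm{lip}(f_n)$ and $c$ is a bounded Lipschitz function in $\Lp$ because $\m(X)<\infty$, so approximating sequences transfer in both directions. For (iii), we use $\mathrm{lip}(af_n)=|a|\mathrm{lip}(f_n)$; the case $a=0$ is trivial, and for $a\neq0$ we apply the inequality with $a$ and $1/a$ to get equality.

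Item (iv) is the main step. For Lipschitz $f,g$, set $h=f-g$. Then $\tilde f=f-\phi(h)=(1-\phi'(h))f+\phi'(h)g$ up to first order pointwise, and similarly for $\tilde g=g+\phi(h)$. Pointwise, $\mathrm{lip}\,\tilde f\le (1-\phi'(h(x)))\mathrm{lip} f+\phi'(h(x))\mathrm{lip}\, g$ and symmetrically for $\tilde g$. This holds because $\phi\in C^1$: locally $\phi(h(y))-\phi(h(x))=\phi'(h(x))(h(y)-h(x))+o(\dist(x,y))$. Convexity of $r^p$ and $0\le\phi'\le1$ then give
\[
\mathrm{lip}(\tilde f)^p+\mathrm{lip}(\tilde g)^p\le \mathrm{lip}(f)^p+\mathrm{lip}(g)^p
\]
pointwise. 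We integrate this and pass to the relaxation with optimal sequences $f_n,g_n$. Because $\phi$ is $1$-Lipschitz, $f_n-\phi(f_n-g_n)\to f-\phi(f-g)$ in $L^p$, so lower semicontinuity closes the argument.

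The obstacle is that the local-constant bound uses $\mathrm{lip}$ rather than the asymptotic Lipschitz constant appearing in some equivalent definitions. If needed, we would work with the relaxation of the asymptotic constant, which obeys the same pointwise bounds by the same first-order expansion taken on small balls. We then invoke the equivalence in \cite[Theorem~7.1]{metricsobolev}. We would also check that completeness and separability are never used, since only the approximation and lower semicontinuity steps are invoked.
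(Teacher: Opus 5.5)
Your proposal is correct and follows the route the paper defers to in \cite[Subsection~4.1]{calculus-heatflow}: realize $\chp$ as the $L^p$-relaxation of $f\mapsto\frac1p\int_X \mathrm{lip}(f)^p\,\dd\m$ on Lipschitz functions, verify (i)--(iv) on Lipschitz functions through pointwise bounds, and pass to the limit along optimal sequences. For (iv) the key bound is $\mathrm{lip}(f-\phi(f-g))\le(1-\phi'(f-g))\,\mathrm{lip}(f)+\phi'(f-g)\,\mathrm{lip}(g)$ together with its mirror image and convexity of $r\mapsto r^p$, and none of these steps uses completeness or separability.

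Two points are routine and need no extra work. Identifying the relaxed-slope definition \cite[Definition~4.2]{density_lipschitz} with this relaxation only requires weak compactness and Mazur's lemma in $L^p$, plus truncation if you want bounded approximants. Your worry about $\mathrm{lip}$ versus the asymptotic Lipschitz constant is unnecessary: that definition uses the slope $\limsup_{y\to x}|f(y)-f(x)|/\dist(x,y)$, and your pointwise bounds hold for either quantity anyway.
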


\begin{remark}
    It is well known that the domain $\domain{\chp}$ is a subspace of $\Lp$ and the function $f \in \domain{\chp} \mapsto \sobolevnorm{f} \coloneqq (\lpnorm{f}^p + p\chp(f))^{1/p}$ is a norm on $\domain{\chp}$.
    Moreover, $\sobolev \coloneqq \domain{\chp}$ equipped with $\sobolevnorm{}$ is a Banach space.
    This space $\sobolev$ is called a \textit{Sobolev space} on $(X, \dist, \m)$.
\end{remark}

We define the $p$-Laplacian as the subdifferential of $\chp$.
Denote by $\partial \chp(f)$ the subdifferential of $\chp$ at $f$.
\begin{definition}[$p$-Laplacian]
    For $f \in \sobolev$ with $\partial \chp(f) \neq \emptyset$, we define the $p$-Laplacian $\plap f$ by the minimal $L^q$-norm element in $-\partial \chp(f)$, namely, $\plap f \in -\partial \chp(f)$ such that $\lqnorm{\plap f} = \min_{\xi \in \partial \chp(f)}\lqnorm{\xi}$.
\end{definition}
Thanks to the $L^q$-closedness of the subdifferential $\partial \chp(f)$ and the uniform convexity of the $L^q$-norm, $\plap f$ is well-defined.

\begin{example}
    This definition of the $p$-Laplacian is compatible with the Euclidean setting.
    Let $X \coloneqq \Omega \subset \R^n$ be a bounded domain with sufficiently smooth boundary, $\dist$ be the Euclidean distance and $\m$ be the Lebesgue measure.
    In this case $\chp$ coincides with the functional
    \[
        \mathcal{E}_p(f) \coloneqq \begin{cases}
            \frac1p \int_X |\nabla f|^p d \m & \text{if } f \in W^{1, p}(\Omega) \\
            +\infty & \text{otherwise}
        \end{cases}.
    \]
    Further, the $p$-Laplacian $\plap f$ satisfies the Euler-Lagrange equation
    \[
        \int (\plap f) g \dd \m = - \int \inner{|\nabla f|^{p-2} \nabla f}{\nabla g} \dd \m \text{ for each } g \in W^{1, p}(\Omega).
    \]
\end{example}

We end this section by defining eigenvalues of the $p$-Laplacian.
\begin{definition}[Eigenvalues and eigenfunctions]
    A real value $\lambda \in \R$ is an eigenvalue of $\plap$ if there exists $f \in \sobolev \setminus \{0\}$ such that
    \[
        \plap f = - \lambda J_p(f).
    \]
    Moreover, such $f$ is called an eigenfunction of $\plap$ for $\lambda$.
\end{definition}

\begin{remark}
    By the $p$-homogeneity of $\chp$, it clearly holds that
    \[
        \inner{\plap f}{ f} = -p\chp(f).
    \]
    Thus, any eigenvalue $\lambda$ of $\plap$ is non-negative.
\end{remark}

\subsection{Characterization of eigenvalues as critical points of a functional \texorpdfstring{$\Phi$}{phi}}
In this section, we characterize eigenvalues of $\plap$ as critical points of a functional.

For each $M > 0$, we denote by $E^M$ the following sublevel set of $\chp$
\[
    E^M \coloneqq \left\{ f \in L^p(X, \m) \mid \chp(f) \le \frac{M}{p} \right\}
\]
and by $E^{1,M}$ the intersection of $E^M$ and the closed unit ball $\overline{\ball}$ in $\Lp$ centered at $0$ 
\begin{equation}\label{eq:ball_sublevel_set}
    E^{1, M} \coloneqq E^M \cap \overline{\ball}.
\end{equation}

For $M > 0$ and $L \coloneqq M +1$, we define a functional $\Phi_M \colon \Lp \to [0, +\infty]$ by
\begin{equation}\label{eq:Phi}
    \Phi_M(f) \coloneqq \begin{cases}
        \chp(f) - \frac{L}{p}\lpnorm{f}^p & \text{if } f \in E^{1,M}\\
        +\infty & \text{otherwise}
    \end{cases}.
\end{equation}
Note that  any real number $L > M$ suffices for the following argument; hence we fix $L \coloneqq M +1$.

The Fr\'echet subdifferential $\partial_F\Phi_M$ admits the following form.
\begin{proposition}
    Let $f \in \overline{\ball}$ with $\chp(f) < \frac{M}{p}$.
    Then it holds that
    \begin{equation}\label{eq:subdifferential_Phi}
        \partial_F \Phi_M(f) = \begin{cases}
            \partial \chp(f) - L J_p(f) & \text{if } f \in \ball\\
            \partial\chp(f) + \R_{\ge 0}J_p(f) - LJ_p(f) & \text{if } f \in \sphere
        \end{cases},
    \end{equation}
    where $\R_{\ge 0}J_p(f) \coloneqq \{ a J_p(f) \mid a \ge 0\}$.
\end{proposition}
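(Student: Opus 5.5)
Writing $\Phi_M=\chp+\iota_{E^{1,M}}-L\Psi$ with $\Psi=\frac1p\lpnorm{\cdot}^p$, I will first reduce to a convex subdifferential computation and then evaluate a sum rule. Since $\chp$ is convex and the indicator $\iota_{E^{1,M}}$ of the closed convex set $E^{1,M}$ is convex, $\Phi_M$ is $(-L)$-convex for $\Psi$. By the proof of \cref{lem:global_subgradient_formula}, $\partial_F\Phi_M(f)=\partial F(f)-LJ_p(f)$ with $F\coloneqq\chp+\iota_{E^{1,M}}$ convex. It therefore suffices to show
\[
\partial F(f)=\partial\chp(f)\ \text{if } f\in\ball,\qquad \partial F(f)=\partial\chp(f)+\R_{\ge0}J_p(f)\ \text{if } f\in\sphere,
\]
whenever $\chp(f)<M/p$.

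\textbf{Inclusion ``$\supseteq$''.} This is elementary. Let $\xi\in\partial\chp(f)$ and $a\ge0$, and take $g\in E^{1,M}$. Then $\inner{\xi}{g-f}\le\chp(g)-\chp(f)$. If $f\in\sphere$, then $\inner{J_p(f)}{g-f}\le\Psi(g)-\Psi(f)\le0$, because $\lpnorm{g}\le1=\lpnorm{f}$. Adding these gives $\xi+aJ_p(f)\in\partial F(f)$.

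\textbf{Inclusion ``$\subseteq$''.} This is the main obstacle, because $\chp$ is not continuous on $\Lp$, so the Moreau--Rockafellar sum rule cannot be quoted directly. I would localize. Let $\eta\in\partial F(f)$ and $g\in\domain{\chp}$ be arbitrary, and set $f_s\coloneqq f+s(g-f)$ for small $s>0$.

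By convexity, $\chp(f_s)\le(1-s)\chp(f)+s\chp(g)<M/p$ for $s$ small, since $\chp(f)<M/p$ strictly. So the energy constraint is locally inactive along segments into $\domain{\chp}$, and only the ball constraint remains.

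\emph{Case $f\in\ball$.} Then $\lpnorm{f}<1$, so $f_s\in E^{1,M}$ for small $s$. The subgradient inequality at $f_s$, divided by $s$, gives $\inner{\eta}{g-f}\le\chp(g)-\chp(f)$ by convexity of $\chp$. Hence $\eta\in\partial\chp(f)$.

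\emph{Case $f\in\sphere$.} I would write $\partial F(f)=\partial(\chp+\iota_{\overline{\ball}})(f)$. This holds because the minimization characterization $\eta\in\partial F(f)$ iff $f$ minimizes $F-\eta$ is local along segments, and the constraint $\chp\le M/p$ is inactive near $f$ on segments. I would then apply the sum rule to $\chp+\iota_{\overline{\ball}}$. The ball indicator is continuous at an interior point such as $0$, and $0\in\domain{\chp}$, so Moreau--Rockafellar applies and gives
\[
\partial(\chp+\iota_{\overline{\ball}})(f)=\partial\chp(f)+N_{\overline{\ball}}(f).
\]
Finally, I would identify the normal cone $N_{\overline{\ball}}(f)=\R_{\ge0}J_p(f)$ for $f\in\sphere$. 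This uses smoothness of the $L^p$ norm for $p\in(1,\infty)$: the supporting functionals of the unit ball at $f$ are exactly the nonnegative multiples of the Fréchet derivative of $\lpnorm{\cdot}$, which is $J_p(f)$ when $\lpnorm{f}=1$.

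The delicate point is justifying the localization $\partial F(f)=\partial(\chp+\iota_{\overline{\ball}})(f)$. Given $\eta\in\partial F(f)$ and arbitrary $g\in\overline{\ball}\cap\domain{\chp}$, the segment $f_s$ remains in $\overline{\ball}$ by convexity, and lies in $E^M$ for small $s$. So $\inner{\eta}{f_s-f}\le\chp(f_s)-\chp(f)$, which by convexity is at most $s(\chp(g)-\chp(f))$. Dividing by $s$ yields the subgradient inequality at $g$, which proves the localization.
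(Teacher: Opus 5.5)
Your proposal is correct and follows essentially the paper's argument: you reduce $\partial_F\Phi_M(f)$ to $\partial(\chp+\iota_{E^{1,M}})(f)-LJ_p(f)$, split off the ball constraint by the Moreau--Rockafellar sum rule using continuity of $\iota_{\overline{\ball}}$ at $0\in\domain{\chp}$, and discard the energy constraint because it is inactive when $\chp(f)<M/p$. The differences are minor. You discard the energy constraint before applying the sum rule, whereas the paper does it after. You also supply the segment estimate $\chp(f_s)\le(1-s)\chp(f)+s\chp(g)$, which justifies $\partial(\chp+\iota_{E^M})(f)=\partial\chp(f)$ where the paper only asserts it, and you identify the normal cone $N_{\overline{\ball}}(f)=\R_{\ge0}J_p(f)$ explicitly.
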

\begin{proof}
    For any $A \subset \Lp$ we denote by $\delta_A$ the indicator function
    \[
        \delta_A(f) \coloneqq \begin{cases}
            0 & \text{if } f \in A \\
            +\infty & \text{ if } f \notin A
        \end{cases}.
    \]
    We have
    \[
        \Phi_M = \chp + \delta_{E^M} + \delta_{\overline{\ball}} - L\Psi.
    \]
    Since the functions $\delta_{E^M}$ and $\delta_{\overline{\ball}}$ are lower semicontinuous and convex, we have    
    \[
        \partial_F\Phi_M(f) = \partial(\chp + \delta_{E^M} + \delta_{\overline{\ball}})(f) - LJ_p(f) \text{ for each } f \in \overline{\ball}.
    \]
    Since $0 \in \domain{\chp + \delta_{E^M}} \cap \domain{\delta_{\overline{\ball}}}$ and $\delta_{\overline{\ball}}$ is continuous at $0$, we have
    \[
        \partial(\chp + \delta_{E^M} + \delta_{\overline{\ball}})(f)
        = \begin{cases}
            \partial(\chp + \delta_{E^M})(f) & \text{if } f \in \ball\\
            \partial(\chp + \delta_{E^M})(f) + \R_{\ge 0}J_p(f) & \text{if } f \in \sphere
        \end{cases}.
    \]
    Further, by the lower semicontinuity and convexity of $\chp$, we have that $\partial (\chp + \delta_{E^M})(f) = \partial \chp(f)$ if $\chp(f) < \frac{M}{p}$.
    This completes the proof.
\end{proof}

We can characterize eigenvalues and eigenfunctions as critical points of $\Phi_M$.
Recall the definition~\eqref{eq:local_slope} of the local slope $\slope{\Phi_M}$.
\begin{proposition}\label{prop:critical_points_are_eigen_vectors}
    Let $\lambda \ge 0, M > \lambda$ and $f \in \sphere$.
    Then $f$ is an eigenfunction of $\plap$ for an eigenvalue $\lambda$ if and only if $\slope{\Phi_{M}}(f) = 0$ and $\chp(f) = \frac{\lambda}{p}$.
\end{proposition}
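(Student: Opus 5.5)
The plan is to reduce both directions to the explicit form \eqref{eq:subdifferential_Phi} of $\partial_F\Phi_M$ on the sphere, combined with the Euler-type identity $\inner{\xi}{f} = p\chp(f)$ for $\xi \in \partial\chp(f)$. This identity comes from the $p$-homogeneity of $\chp$: compare $\chp((1\pm\eps)f)$ with the subgradient inequality and let $\eps \to 0$. First note that $\Phi_M + L\Psi = \chp + \delta_{E^M} + \delta_{\overline{\ball}}$ is convex, so $\Phi_M$ is $(-L)$-convex for $\Psi$. Hence \cref{prop:slope_is_minimal_norm} applies and gives $\slope{\Phi_M}(f) = \normslope{\Phi_M}(f)$. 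Moreover, the Hahn--Banach step in its proof produces an element $\xi \in \partial_F\Phi_M(f)$ with $\norm{\xi} \le \slope{\Phi_M}(f)$. Consequently $\slope{\Phi_M}(f) = 0$ if and only if $0 \in \partial_F\Phi_M(f)$. We will also use two facts for $f \in \sphere$: $\inner{J_p(f)}{f} = \lpnorm{f}^p = 1$ and $\lqnorm{J_p(f)} = \lpnorm{f}^{p-1} = 1$.

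($\Rightarrow$) Suppose $\plap f = -\lambda J_p(f)$. Then $\lambda J_p(f) \in \partial\chp(f)$. Pairing with $f$ and using the Euler identity gives $p\chp(f) = \lambda$, so $\chp(f) = \lambda/p < M/p$, and \eqref{eq:subdifferential_Phi} is available at $f \in \sphere$. Take $a \coloneqq L - \lambda = M + 1 - \lambda > 0$. Then
\[
0 = \lambda J_p(f) + a J_p(f) - L J_p(f) \in \partial\chp(f) + \R_{\ge 0}J_p(f) - LJ_p(f) = \partial_F\Phi_M(f),
\]
and therefore $\slope{\Phi_M}(f) = 0$.

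($\Leftarrow$) Suppose $\slope{\Phi_M}(f) = 0$ and $\chp(f) = \lambda/p < M/p$. Then $0 \in \partial_F\Phi_M(f)$, so by \eqref{eq:subdifferential_Phi} there exist $\xi \in \partial\chp(f)$ and $a \ge 0$ with $\xi = (L-a)J_p(f)$. Pairing with $f$ gives $L - a = p\chp(f) = \lambda$, hence $\lambda J_p(f) \in \partial\chp(f)$. Clearly $f \in \sobolev \setminus \{0\}$.

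The remaining step, which is the only point needing care, is to check that $-\lambda J_p(f)$ is the \emph{minimal-norm} element of $-\partial\chp(f)$, as required by the definition of $\plap$. For any $\eta \in \partial\chp(f)$, H\"older's inequality and the Euler identity give
\[
\lqnorm{\eta} = \lqnorm{\eta}\,\lpnorm{f} \ge \inner{\eta}{f} = p\chp(f) = \lambda = \lqnorm{\lambda J_p(f)}.
\]
So $\lambda J_p(f)$ attains the minimal norm in $\partial\chp(f)$. By uniqueness of the minimal-norm element (uniform convexity of $L^q$ and closedness of $\partial\chp(f)$), it follows that $\plap f = -\lambda J_p(f)$, so $f$ is an eigenfunction of $\plap$ for $\lambda$.
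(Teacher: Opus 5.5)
Your proposal is correct and follows essentially the same route as the paper. Both arguments reduce $\slope{\Phi_M}(f)=0$ to $0\in\partial_F\Phi_M(f)$ via \cref{prop:slope_is_minimal_norm} and \eqref{eq:subdifferential_Phi}, identify the multiplier as $a=L-p\chp(f)$ by pairing with $f$, and obtain minimality of $\lambda J_p(f)$ in $\partial\chp(f)$ from $\lqnorm{\eta}\ge\inner{\eta}{f}=p\chp(f)$. You also make explicit several points the paper leaves implicit (attainment of the infimum in $\normslope{\Phi_M}$, the Euler identity from $p$-homogeneity, and the check $\chp(f)<M/p$ needed to invoke \eqref{eq:subdifferential_Phi}), and these are all handled correctly.
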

\begin{proof}
    From \cref{prop:slope_is_minimal_norm} and \eqref{eq:subdifferential_Phi}, it holds that $\slope{\Phi_M}(f) = 0$ if and only if there exist $\xi^\ast \in \partial \chp(f)$ and $a^\ast \in \R_{\ge 0}$ such that 
    \[
        \lqnorm{\xi^\ast + (a^\ast - L)J_p(f)} = 0.
    \]
    If $f$ satisfies $\plap f = - \lambda J_p(f)$, we can take $- \plap f$ as $\xi^\ast$ and take $L - p \chp(f) \ge 0$ as $a^\ast$.
    Conversely, if $\slope{\Phi_M}(f) = 0$ and $\chp(f) = \frac{\lambda}{p}$, the above $a^\ast$ must satisfy $a^\ast = L - p\chp(f)$.
    Thus, $\xi^\ast = \lambda J_p(f)$.
    Moreover, since it holds that
    \[
        p\chp(f) = \inner{\xi}{f} \le \lqnorm{\xi} \text{ for any } \xi \in \partial \chp(f),
    \]
    the above $\xi^\ast$ is $-\plap f$.
    This completes the proof.
\end{proof}
\subsection{Properties of mirror flows for \texorpdfstring{$\Phi$}{phi}}
We fix any $M > 0$ and fix $\Phi_M$ in this section; hence we write $\Phi_M$ as $\Phi$ for notational simplicity.
We study the properties of $\Psi$-mirror flows for $\Phi$.
Throughout this section, we always assume the following assumption holds.
\begin{assumption}\label{ass:p_condition_and_rellich_kondrachov}
    The following hold:
    \begin{itemize}
        \item $p \ge 2$.
        \item The subspace $\sobolev$ is compactly embedded in $\Lp$, namely,  
        if $\{f_n\}_{n \ge 1} \subset \Lp$ satisfies
        $\sup_{n \ge 1}\lrbra{\lpnorm{f_n} + \chp(f_n)} < +\infty$, then there exist a subsequence $\{f_{n(k)}\}_{k \ge 1}$ and $f \in \Lp$ such that $\lpnorm{f_{n(k)} - f} \to 0$.
\end{itemize}    
\end{assumption}

\begin{example}
    The compact embedding of $\sobolev$ holds for metric measure spaces which satisfy local doubling and the weak local $(1, p)$-Poincar\'e inequality, and there exist some constants $\alpha > 0$, $A, B \ge 0$ and some point $x_0 \in X$ with
    \begin{equation}\label{eq:uniform_integrability}
        \int_X \dist(x, x_0)^\alpha |f|^p \dd \m \le A \|f\|_{L^p}^p + B \chp(f) \text{ for each } f \in \Lp,
    \end{equation}
    see~\cite{HajlaszKoskela2000SobolevMetPoincare}.
    In particular, \eqref{eq:uniform_integrability} holds whenever $X$ has a finite diameter.
    By known results, examples in this class include $\mathrm{CD}(K,N)$ spaces with $N \in (1, +\infty)$, either when $K > 0$ or when $K\in \R$ and the diameter is finite: see~\cite{Sturm2006GeometryII,Rajala2012InterpolatedMeasures}.
    Other examples, which satisfy the above compact embedding assumption, are $\mathrm{RCD}(K, +\infty)$ spaces with $K > 0$, or with $K \in \R$ and finite diameter: see~\cite{AmbrosioHonda2017NewStability}.
\end{example}

Note that $\Phi$ is $\lambda$-convex for $\Psi$ and $\lambda = -L$ in the sense of \cref{def:lambda_convex} and it is bounded from below by $-L$.
Thus, we can apply \cref{thm:existence_MF} to get the following result.
\begin{proposition}
    For any $f_0 \in \domain{\Phi}$, there exists a $\Psi$-mirror flow $f_t$ for $\Phi$ starting from $f_0$.
\end{proposition}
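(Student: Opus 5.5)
The plan is to apply \cref{thm:existence_MF} with $V \coloneqq \Lp$, $\Psi(\cdot) = \frac1p\lpnorm{\cdot}^p$ and $\Phi = \Phi_M$; this reduces to checking the standing structural hypotheses and \cref{ass:bregman_lower_inequality,ass:psi_norm_bounds,ass:bounded_from_below,ass:bounded_sublevel_compact}.

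\textbf{Structural hypotheses.} Since $p \ge 2$ by \cref{ass:p_condition_and_rellich_kondrachov}, the space $\Lp$ is uniformly convex, hence reflexive. Therefore $V^\ast = L^q(X,\m)$ is reflexive and has the Radon--Nikod\'ym property. The map $\Psi$ is convex, nonnegative and $C^1$ with $\nabla\Psi = J_p$. Its conjugate is $\Psiast = \frac1q\lqnorm{\cdot}^q$, which is $C^1$ on $L^q$ because $q>1$.

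$\Phi$ is proper, since $0 \in E^{1,M}$. It is lower semicontinuous, because $\chp$ is lower semicontinuous by \cref{prop:properties_of_chp}, $E^{1,M}$ is closed, and $\Psi$ is continuous. Moreover $\Phi + L\Psi = \chp + \delta_{E^M} + \delta_{\overline{\ball}}$ is a sum of convex functions, so $\Phi$ is $(-L)$-convex for $\Psi$.

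\textbf{Assumptions of \cref{thm:existence_MF}.}
\begin{itemize}
\item \cref{ass:bregman_lower_inequality} holds by \cref{rem:lp_bregman}.
\item \cref{ass:psi_norm_bounds} holds with $\beta = p-1$, $C_\beta=1$, $m_\beta = 0$, since $\lqnorm{J_p(f)} = \lpnorm{f}^{p-1}$.
\item \cref{ass:bounded_from_below} holds because $\chp \ge 0$ and $\lpnorm{f}\le 1$ on the domain, giving $\Phi \ge -L/p \ge -L$.
\end{itemize}

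\textbf{Main step: \cref{ass:bounded_sublevel_compact}.} Any sublevel set $\{\Phi \le c\}$ is contained in $E^{1,M}$, on which $\lpnorm{f} + \chp(f) \le 1 + M/p$. Take a sequence in $\overline{B_r} \cap \{\Phi\le c\}$. By the compact embedding in \cref{ass:p_condition_and_rellich_kondrachov}, it has an $L^p$-convergent subsequence. The set $\overline{B_r}\cap\{\Phi\le c\}$ is closed, since $\Phi$ is lower semicontinuous, so the limit stays in the set. Hence the set is sequentially compact, and thus compact in the metric space $\Lp$.

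With all hypotheses verified, \cref{thm:existence_MF} yields a $\Psi$-mirror flow for $\Phi$ starting from any $f_0 \in \domain{\Phi}$. No step is a genuine obstacle; compactness is the only place where \cref{ass:p_condition_and_rellich_kondrachov} enters.
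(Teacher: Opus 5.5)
Your proposal is correct and follows the paper's route: the paper notes only that $\Phi$ is $(-L)$-convex for $\Psi$ and bounded from below, then invokes \cref{thm:existence_MF}. You additionally check the hypotheses the paper leaves implicit — the Radon--Nikod\'ym property of $L^q$, \cref{ass:psi_norm_bounds} with $\beta=p-1$, and \cref{ass:bounded_sublevel_compact} via the compact embedding and lower semicontinuity — and these checks are all sound.
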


The following result was proved in \cite{AmbrosioHondaPortegies2018Continuity} for gradient flows in the case $p=2$.
Using mirror flows, we extend their result to the case of $p \ge 2$.
\begin{lemma}\label{lem:stay_in_sphere}
    Let $f_0 \in \sphere$ with $\chp(f_0) < \frac{M}{p}$ and $f_t$ be a $\Psi$-mirror flow for $\Phi$ starting from $f_0$.
    Then $f_t \in \sphere$ for any $t \ge 0$, the map $t \mapsto \chp(f_t)$ is non-increasing and 
    \begin{equation}
        \label{eq:mirror_flow_derivative}
        \dt J_p(f_t) \in -\partial \chp(f_t) + p\chp(f_t) J_p(f_t),\ \Leb\text{-a.e. } t > 0.
    \end{equation}
\end{lemma}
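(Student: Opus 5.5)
The plan is to combine the monotonicity of $\Phi(f_t)$ from \cref{thm:mirror_flow_equivalent_evi_solution} with a chain-rule computation for $t\mapsto\lpnorm{f_t}^p$ in the dual variable. This is exactly the kind of computation that mirror flows make available.

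\textbf{Step 1: a uniform bound keeping $\chp(f_t)$ away from $\frac Mp$.} Since $f_t\in\domain{\Phi}=E^{1,M}$ for all $t$, we have $\lpnorm{f_t}\le 1$. By \cref{thm:mirror_flow_equivalent_evi_solution}, $t\mapsto\Phi(f_t)$ is non-increasing, so
\[
\chp(f_t)-\tfrac Lp\lpnorm{f_t}^p\le \chp(f_0)-\tfrac Lp .
\]
This gives $p\chp(f_t)\le p\chp(f_0)+L(\lpnorm{f_t}^p-1)\le p\chp(f_0)<M$ for every $t$. Hence the explicit formula \eqref{eq:subdifferential_Phi} for $\partial_F\Phi(f_t)$ is available at every $t$. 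Consequently, for a.e.\ $t$ there exist $\xi_t\in\partial\chp(f_t)$ and $a_t\ge0$, with $a_t=0$ when $f_t\in\ball$, such that
\[
\dt J_p(f_t)=-\xi_t-(a_t-L)J_p(f_t).
\]

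\textbf{Step 2: the norm is non-decreasing.} Since $\Psiast(J_p(f))=\frac1q\lpnorm{f}^p$, $\Psiast$ is $C^1$ (hence locally Lipschitz on bounded sets), and $t\mapsto J_p(f_t)$ is locally absolutely continuous, the map $t\mapsto \frac1q\lpnorm{f_t}^p$ is locally absolutely continuous. Its derivative is $\inner{\dt J_p(f_t)}{f_t}$. Using $p$-homogeneity, $\inner{\xi_t}{f_t}=p\chp(f_t)$, so
\[
\dt\tfrac1q\lpnorm{f_t}^p=-p\chp(f_t)+(L-a_t)\lpnorm{f_t}^p .
\]
At a.e.\ $t$ with $f_t\in\ball$, we have $a_t=0$. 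By the bound of Step 1 the right-hand side is then at least $L-p\chp(f_0)>L-M=1>0$. At times with $f_t\in\sphere$ the norm is at its maximum, so its derivative (where it exists) is zero. In either case the derivative is $\ge0$ a.e., so $\lpnorm{f_t}\ge\lpnorm{f_0}=1$. Combined with $\lpnorm{f_t}\le1$, this gives $f_t\in\sphere$ for all $t$.

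\textbf{Step 3: conclusions.} With $\lpnorm{f_t}\equiv1$, the monotonicity of $\Phi(f_t)=\chp(f_t)-\frac Lp$ shows that $\chp(f_t)$ is non-increasing. Moreover, the derivative computed in Step 2 vanishes a.e., which forces $a_t=L-p\chp(f_t)$. Substituting this into the inclusion yields $\dt J_p(f_t)=-\xi_t+p\chp(f_t)J_p(f_t)$, which is \eqref{eq:mirror_flow_derivative}.

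I expect the main obstacle to be the bookkeeping in Step 1. One must ensure that $\chp(f_t)<\frac Mp$ so that the boundary of $E^M$ contributes nothing to $\partial_F\Phi$. One must also justify the absolute continuity used for the chain rule in Step 2, which relies on $\Psiast$ being $C^1$ on the bounded set $\{J_p(f_t)\}$.
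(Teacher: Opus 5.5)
Your proof is correct and follows essentially the same route as the paper. Both use the monotonicity of $\Phi(f_t)$ to get $\chp(f_t)\le\chp(f_0)<\frac Mp$, then the explicit form of $\partial_F\Phi$, then the derivative of $\lqnorm{J_p(f_t)}^q$ (equivalently $q\Psiast(J_p(f_t))$), which vanishes on the sphere and is positive in the open ball, and finally read off $a_t=L-p\chp(f_t)$. Your handling of the open-ball case is in fact slightly more careful than the paper's displayed formula: you keep the term $L\lpnorm{f_t}^p$ and use the sharper bound $p\chp(f_t)\le p\chp(f_0)+L(\lpnorm{f_t}^p-1)$ to obtain positivity.
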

\begin{proof}
    The following argument does not depend on the choice of $\xi \in \partial \chp(f_t)$.
    Thus, by abuse of notation, we simply denote a suitable element $\xi_t \in \partial \chp(f_t)$ by $\partial \chp(f_t)$.
    
    Since the map $t \mapsto \Phi(f_t)$ is non-increasing, we have
    \[
        \chp(f_t) - L\Psi(f_t) \le \chp(f_0) - L < +\infty \text{ for any } t \ge 0.
    \]
    Since $f_t \in \overline{\ball}$, we see that $\chp(f_t) \le \chp(f_0) < \frac{M}{p}$.
    If $f_t \in \sphere$, since $\lqnorm{J_p(f_t)}^q$ takes a maximum value $1$ at $t$, we have $\dt \lqnorm{J_p(f_t)}^q = 0$.
    Note that the $p$-homogeneity of $\chp$ implies that $\inner{\partial \chp(f_t)}{f_t} = p \chp(f_t)$.
    If $f_t \in \ball$, then \eqref{eq:subdifferential_Phi} implies that
    \[
        \dt \lqnorm{J_p(f_t)}^q = L - p\chp(f_t) > 0.
    \]
    Thus, we have that $f_t \in \sphere$ for any $t \ge 0$.
    Combining this with the monotonicity of the map $t \mapsto \Phi(f_t)$ yields that the map $t \mapsto \chp(f_t)$ is non-increasing.

    It is clear from $\lpnorm{f_t} = 1$ for any $t \ge 0$ that $0 = \dt \lqnorm{J_p(f_t)}^q$.
    Thus, by \eqref{eq:subdifferential_Phi} we have
    \[
        0 = -p\chp(f_t) + L - a_t,\ \Leb\text{-a.e. } t > 0.
    \]
    This completes the proof.
\end{proof}
 
In the next theorem, we consider $\Psi$-mirror flows for $\chp - L\Psi$, instead of $\Phi$ ($= \chp + \delta_{E^{1,M}} - L\Psi$).
Before going into the details of the theorem, we briefly show the relationship between $\Psi$-mirror flows for $\chp - L\Psi$ and ones for $\Phi$.
Let $\tilde{f}_t$ be a $\Psi$-mirror flow for $\chp - L\Psi$.
By the $p$-homogeneity of $\chp$ and the $(p-1)$-homogeneity of $\partial \chp$ and $J_p$, direct calculations show that the normalized curve $f_t \coloneqq \tilde{f}_t / \lpnorm{\tilde{f}_t}$ satisfies
\[
    \dt J_p(f_t) \in -\partial \chp(f_t) + p\chp(f_t) J_p(f_t),\ \Leb\text{-a.e. } t > 0,
\]
namely, $f_t$ is a $\Psi$-mirror flow for $\Phi$.
Note that the coefficient of $J_p(\tilde{f}_t)$ is a constant $L$ in the right hand side of the following inclusion:
\[
    \dt J_p(\tilde{f}_t) \in -\partial \chp(\tilde{f}_t) + L J_p(\tilde{f}_t), \ \Leb\text{-a.e. } t > 0.
\]
This motivates treating $\tilde{f}_t$.

\begin{theorem}[$L^1$-Gr\"onwall's inequality]\label{thm:unique_existence_un_normalized}
    Let $f_t, g_t$ be $\Psi$-mirror flows for $\chp - L\Psi$.
    Then, it holds that
    \[
        \norm{J_p(f_t) - J_p(g_t)}_{L^1} \le \e^{L(t-s)} \norm{J_p(f_s) - J_p(g_s)}_{L^1}
        \text{ for each } 0 \le s \le t.
    \]
    In particular, there exists at most one $\Psi$-mirror flow for $\chp - L \Psi$ starting from the same initial point.
\end{theorem}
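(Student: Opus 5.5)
Set $\varphi_t \coloneqq J_p(f_t)$ and $\psi_t \coloneqq J_p(g_t)$. Both are locally absolutely continuous in $L^q$, and for a.e. $t$ we have
\[
    \dot\varphi_t = -\xi_t + L\varphi_t, \qquad \dot\psi_t = -\eta_t + L\psi_t,
\]
with $\xi_t \in \partial\chp(f_t)$ and $\eta_t \in \partial\chp(g_t)$. The subdifferential of the unconstrained functional $\chp - L\Psi$ is $\partial\chp - LJ_p$ by the argument used for \eqref{eq:subdifferential_Phi}. The plan is to show that $t\mapsto \norm{\varphi_t-\psi_t}_{L^1}$ satisfies
\[
    \frac{\dd}{\dd t}\norm{\varphi_t-\psi_t}_{L^1} \le L\norm{\varphi_t-\psi_t}_{L^1}
\]
for a.e. $t$, and then to integrate with the factor $\e^{-Lt}$. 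Uniqueness then follows: equal initial data give $J_p(f_t)=J_p(g_t)$ for all $t$, and $J_p$ is injective on $\Lp$.

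\textbf{Step 1: regularize the $L^1$ norm.} The $L^1$ norm is not differentiable, so I work with $\int_X \Theta_\eps(\varphi_t-\psi_t)\dd\m$, where $\Theta_\eps$ is a convex $C^1$ approximation of $|\cdot|$ with $\Theta_\eps' = \phi_\eps$, odd, nondecreasing, $|\phi_\eps|\le 1$ and $\phi_\eps \to \sgn$. Since $\m(X)<\infty$, $L^q\subset L^1$, so the composition is absolutely continuous and the chain rule gives
\[
    \frac{\dd}{\dd t}\int_X \Theta_\eps(\varphi_t-\psi_t)\dd\m = -\int_X (\xi_t-\eta_t)\,\phi_\eps(\varphi_t-\psi_t)\dd\m + L\int_X(\varphi_t-\psi_t)\,\phi_\eps(\varphi_t-\psi_t)\dd\m .
\]
The second term is at most $L\norm{\varphi_t-\psi_t}_{L^1}$.

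\textbf{Step 2: the key inequality.} I need the accretivity-type bound
\[
    \int_X(\xi-\eta)\,\phi(J_p(f)-J_p(g))\dd\m \ge 0
\]
for $\xi\in\partial\chp(f)$, $\eta\in\partial\chp(g)$ and $\phi$ nondecreasing with $0\le\phi'\le 1$ (taking $\phi=\phi_\eps$, suitably scaled). Because $J_p(u)=|u|^{p-2}u$ is an increasing function of $u$ pointwise, $\phi\circ(J_p(f)-J_p(g))$ has the same sign as $f-g$. I therefore plan to rewrite it as $\tilde\phi(f-g)$, or more robustly to argue directly with test functions of the form $f-h\,\tilde\phi(f-g)$ and $g+h\,\tilde\phi(f-g)$.

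The test inequality comes from \cref{prop:properties_of_chp}~(iv) combined with the subgradient inequalities. With $w = \phi(f-g)$,
\[
    \chp(f-hw)\ge \chp(f)-h\inner{\xi}{w}, \qquad \chp(g+hw)\ge\chp(g)+h\inner{\eta}{w}.
\]
Adding these and using (iv) with the function $h\phi$, where $0\le (h\phi)'\le 1$ for small $h>0$, gives $h\inner{\xi-\eta}{w}\ge 0$.

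\textbf{Main obstacle.} Property (iv) is stated with the argument $f-g$, whereas the natural test function is a function of $J_p(f)-J_p(g)$. Since $p\ge 2$, $J_p$ is not globally Lipschitz, so this gap must be closed. My first attempt is to choose the regularization adapted to $f-g$: set $w_\eps=\phi_\eps(f-g)$ and note that $w_\eps\to\sgn(f-g)=\sgn(\varphi-\psi)$ pointwise. I then take $\eps\to0$ after integrating in time, using dominated convergence, which applies because $|w_\eps|\le1$ and $\m(X)<\infty$.

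Concretely, I integrate the identity $\dot\varphi-\dot\psi = -(\xi-\eta)+L(\varphi-\psi)$ against $\sgn(\varphi-\psi)$. To justify the a.e. formula
\[
    \frac{\dd}{\dd t}\norm{\varphi-\psi}_{L^1} = \int_X (\dot\varphi-\dot\psi)\,\sgn(\varphi-\psi)\dd\m
\]
(with any selection of the sign on the zero set, up to an inequality), I use convexity of $|\cdot|$ through difference quotients. Bounding $-\int_X(\xi-\eta)\sgn(f-g)\dd\m$ by the $\eps$-limit of $-\int_X(\xi-\eta)\,w_\eps\dd\m\le 0$ then gives the differential inequality, and Grönwall's inequality finishes the proof.
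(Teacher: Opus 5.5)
Your proposal is correct and is essentially the paper's proof. You differentiate $\lonenorm{J_p(f_t)-J_p(g_t)}$ using the subgradient $\sgn(J_p(f_t)-J_p(g_t))$, replace it by $\sgn(f_t-g_t)$ via strict monotonicity of $r\mapsto |r|^{p-2}r$, and get $\int_X(\xi_t-\eta_t)\sgn(f_t-g_t)\,\dd\m\ge 0$ from a mollified sign function, the subgradient inequalities and \cref{prop:properties_of_chp}~(iv) after rescaling. Your dominated-convergence passage $\phi_\eps(f-g)\to\sgn(f-g)$, with oddness handling the set $\{f=g\}$, just makes explicit the paper's ``without loss of generality'' mollification step, and the $\Theta_\eps$ regularization of Step~1 is unnecessary because the convex chain rule already holds with equality for the selection $\sgn(\cdot)$ with $\sgn(0)=0$.
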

\begin{proof}
    The choice of $\xi \in \partial \chp(f_t)$ has no effect on the following argument; hence, we simply write $\partial \chp(f_t)$ for a suitable element $\xi_t \in \partial \chp(f_t)$, similarly for $\xi_t \in \partial \chp(g_t)$.
    
    Let $\phi(r) \coloneqq \sgn(r)$ for any $r \in \R$, where we set $\sgn(0) \coloneqq 0$.
    For any $f \in L^p(X, \m)$, the function $x \in X \mapsto \phi(f(x))$ is a subgradient of $\norm{\cdot}_{L^1}$ at $f$.
    Thus, by the upper chain rule for convex functionals, for $\Leb$-a.e.~$t > 0$ we have
    \[
        \begin{aligned}
        \dt \lonenorm{J_p(f_t) - J_p(g_t)} 
        = & \inner{\phi(J_p(f_t) - J_p(g_t))}{-\partial \chp(f_t) + \partial \chp(g_t)} \\
        &+ L \inner{\phi(J_p(f_t) - J_p(g_t))}{J_p(f_t) - J_p(g_t)}.
        \end{aligned}
    \]
    It is enough to show that $\inner{\phi(J_p(f_t) - J_p(g_t))}{-\partial \chp(f_t) + \partial \chp(g_t)} \le 0$.
    This inequality is equivalent to $\inner{\phi(f_t - g_t)}{-\partial \chp(f_t) + \partial \chp(g_t)} \le 0$ because the function $r \in \R \mapsto |r|^{p-2}r$ is strictly increasing.
    By the standard mollification, with no loss of generality, we can assume that $\phi$ is $C^1$, non-decreasing and $0 \le \phi' \le C$ for some constant $C>0$.
    By \cref{prop:properties_of_chp}, we have 
    \[
        \inner{\frac1C\phi(f_t - g_t)}{-\partial \chp(f_t) + \partial \chp(g_t)} \le 0.
    \]
    This completes the proof.
\end{proof}

We go back to the case of $\Phi$.
In order to show the well-posedness of $\Psi$-mirror flows for $\Phi$, we prove the following compactness result for the families of $\Psi$-mirror flows.
For any real number $M'$ with $0 < M' \le M$, we define
\begin{equation}\label{eq:gamma_m_prime}
    \Gamma^{M'} \coloneqq \{ f_t \mid f_t \text{ is a $\Psi$-mirror flow for $\Phi$ with $f_0 \in \sphere \cap E^{M'}$} \}.
\end{equation}

\begin{lemma}[Compactness of $\Gamma^{M'}$]\label{lem:compactness_mirror_flows}
    The collection $\Gamma^{M'}$ is compact in the following sense: for any sequence $\{f_t^n\}_{n \ge 1} \subset \Gamma^{M'}$ there exist a subsequence $\{f_t^{n(k)}\}_{k \ge 1}$ and a $\Psi$-mirror flow $f_t \in \Gamma^{M'}$ such that for any $t_k \in [0, +\infty) \to t \in [0, +\infty)$ we have $\lpnorm{f_{t_k}^{n(k)} - f_t} \to 0$ as $k \to +\infty$.
\end{lemma}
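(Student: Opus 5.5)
We prove the compactness of $\Gamma^{M'}$ by an Ascoli--Arzel\'a argument in the dual variable $J_p(f^n_t)$, followed by passing to the limit in the exponential EVI. The assumptions needed for this are the energy dissipation inequality of \cref{thm:energy_dissipation}, the Rellich--Kondrachov type compactness in \cref{ass:p_condition_and_rellich_kondrachov}, and the equivalence of mirror flows with EVI solutions in \cref{thm:mirror_flow_equivalent_evi_solution}.

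\textbf{Uniform bounds.} Let $\{f^n_t\}\subset\Gamma^{M'}$. By \cref{lem:stay_in_sphere}, each $f^n_t$ lies in $\sphere$ for all $t\ge0$, and
\[
\chp(f^n_t)\le\chp(f^n_0)\le M'/p\le M/p .
\]
Hence all curves take values in $\sphere\cap E^{M'}$. By \cref{ass:p_condition_and_rellich_kondrachov}, this set is relatively compact in $\Lp$. It is also closed, since $\chp$ is lower semicontinuous, so it is compact. Because $J_p$ is continuous, $K\coloneqq J_p(\sphere\cap E^{M'})$ is compact in $L^q$.

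\textbf{Equicontinuity.} On $\sphere$ we have $\norm{J_p(f^n_t)}_{L^q}=1$, so the factor $g(2\norm{J_p(f^n_t)})=g(2)$ is a fixed positive constant. \cref{thm:energy_dissipation} then gives
\[
g(2)\int_0^\infty\Bigl\|\dt J_p(f^n_r)\Bigr\|_{L^q}^2\dd r\le \Phi(f^n_0)-\inf\Phi\le M/p+L .
\]
By Cauchy--Schwarz,
\[
\norm{J_p(f^n_t)-J_p(f^n_s)}_{L^q}\le C|t-s|^{1/2}
\]
with $C$ independent of $n$. This is stronger than what \cref{lem:refined_ascoli_arzela} requires. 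Moreover, after extracting a subsequence, $\bigl\|\dt J_p(f^n_\cdot)\bigr\|$ converges weakly in $L^2_{\mathsf{loc}}$ to some $D$, and $D$ controls the modulus of continuity of the limit.

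\textbf{Extraction of the limit.} Apply \cref{lem:refined_ascoli_arzela} on each $[0,T]$ and take a diagonal subsequence. This yields $\varphi_t$ such that $J_p(f^{n(k)}_t)\to\varphi_t$ in $L^q$ for every $t$. The limit $\varphi_t$ is $1/2$-H\"older, and it is locally absolutely continuous because of the weak-$L^2$ bound. Set $f_t\coloneqq J_q(\varphi_t)=\nabla\Psi^\ast(\varphi_t)$. The uniform equicontinuity together with the pointwise convergence gives $J_p(f^{n(k)}_{t_k})\to\varphi_t$ whenever $t_k\to t$. Since $\nabla\Psi^\ast$ is continuous, $f^{n(k)}_{t_k}\to f_t$ in $\Lp$. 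In particular $f_0\in\sphere\cap E^{M'}$, because that set is closed.

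\textbf{The limit is a mirror flow.} Each $f^n$ satisfies the exponential EVI \eqref{eq:evi_exponential_integral_form} with $\lambda=-L$:
\[
\e^{-L(t-s)}\bregman{h}{f^n_t}-\bregman{h}{f^n_s}\le E_{-L}(t-s)\bigl(\Phi(h)-\Phi(f^n_t)\bigr)\quad\text{for } h\in\domain{\Phi}.
\]
Pass to the limit along $n(k)$:
\begin{itemize}
\item $D_\Psi$ is continuous on $\Lp\times\Lp$, since $\Psi$ is $C^1$, so both Bregman terms converge.
\item $\Phi$ is lower semicontinuous, so $\liminf_k\Phi(f^{n(k)}_t)\ge\Phi(f_t)$. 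Because $\Phi$ enters with a minus sign, this is the right direction for the inequality.
\end{itemize}
So $f_t$ satisfies the exponential EVI. With the local absolute continuity of $\varphi_t=J_p(f_t)$ established above, \cref{thm:mirror_flow_equivalent_evi_solution} shows that $f_t$ is a $\Psi$-mirror flow for $\Phi$, and therefore $f_t\in\Gamma^{M'}$.

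\textbf{Main obstacle.} The delicate step is the a priori bound on $\int\bigl\|\dt J_p(f^n_r)\bigr\|^2\dd r$. It relies on two facts: $\norm{J_p(f^n_t)}_{L^q}$ is pinned at $1$ on $\sphere$, which keeps $g$ bounded away from $0$, and the differentiated form of the energy inequality is valid for a.e.\ $t$. If this bound caused trouble, one could instead integrate the first inequality of \cref{thm:energy_dissipation} directly via the EVI with $h=f^n_{t-h}$.
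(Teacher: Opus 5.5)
Your proposal is correct and follows essentially the same route as the paper: an $L^2$ bound on $\lqnorm{\dt J_p(f^n_t)}$ from the energy dissipation inequality, Ascoli--Arzel\`a in the dual variable with a weak $L^2$ limit giving local absolute continuity, and passage to the limit in the exponential EVI followed by \cref{thm:mirror_flow_equivalent_evi_solution}. Your use of the uniform $\frac12$-H\"older bound to handle $t_k\to t$ is cleaner than the paper's appeal to the EVI for that step. One small caveat: \cref{lem:stay_in_sphere} assumes $\chp(f_0)<M/p$, so it does not cover the case $M'=M$. There you should instead use that the curves take values in the compact set $\domain{\Phi}=E^{1,M}$, where $\lqnorm{J_p(f^n_t)}\le 1$, and that $g(r)=\frac{q-1}{2}r^{q-2}$ is nonincreasing for $p\ge 2$; hence $g(2\lqnorm{J_p(f^n_t)})\ge g(2)$ still holds.
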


\begin{proof}
    Denoted by $\lesssim_p$ inequalities up to a multiplicative constant depending only on $p$.

    Let us recall \cref{rem:lp_bregman}.
    Note that $ \Phi(f_t^n) \in [-L, M']$ for each $t \ge 0$ and $n \ge 1$.
    \cref{thm:energy_dissipation} shows
    \[
        \int_0^{+\infty} \lqnorm{ \dt J_p(f_t^n)} \dd t \lesssim_p M+L \text{ for each } n \ge 1,
    \]
    namely $\{\lqnorm{\dt J_p(f_t^n)}\}_{n \ge 1} \subset L^2([0, +\infty), \R)$ is bounded.
    Thus, after passing to a subsequence and relabeling, there exists $A \in L^2([0, +\infty))$ such that
    \[
    \limsup_{n \to +\infty} \lqnorm{J_p(f_t^n) - J_p(f_s^n)} \le \int_s^t A(r) \dd r \text{ for any } 0 \le s \le t.
    \]
    This allows us to use \cref{lem:refined_ascoli_arzela} to obtain the relabeled subsequence $\{f_t^n\}_{n\ge1}$ and a curve $f_t$ such that
    $\lpnorm{f_t^n - f_t} \to 0$ for each $t \ge 0$ and the curve $t \in [0, +\infty) \mapsto J_p(f_t)$ is locally absolutely continuous.
    Thanks to the lower semicontinuity of $\chp$, we have $f_0 \in \sphere \cap E^{M'}$.
    Further, since each curve $f_t^n$ satisfies the exponential integral form~\eqref{eq:evi_exponential_integral_form}, so does $f_t$; hence, by \cref{thm:mirror_flow_equivalent_evi_solution}, the curve $f_t$ belongs to $\Gamma^{M'}$.
    Let $t_n \in [0,+\infty)$ be such that $t_n \to t \in [0, +\infty)$.
    In the following inequality induced by the triangle inequality
    \[
        \lpnorm{f_t - f^n_{t_n}}^p
        \lesssim_p \lpnorm{f_t - f_t^n}^p + \lpnorm{f_t^n - f_{t_n}^n}^p
    \]
    the first term converges to $0$ as $n \to +\infty$, and so does the second term by the exponential integral form \eqref{eq:evi_exponential_integral_form}.
    This completes the proof.
\end{proof}

\begin{corollary}[Well-posedness of $\Psi$-mirror flows for $\Phi$]\label{cor:unique_existence_mirror_flow}
    For any $f_0 \in \sphere$ with $\chp(f_0) < \frac{M}{p}$, there exists a unique $\Psi$-mirror flow $f_t$ for $\Phi$ starting from $f_0$, which is denoted by $F_t[f_0]$.
    Moreover, if $\{f_0^n\}_{n \ge 1} \subset \sphere$ with $\sup_{n\ge1}\chp(f_0^n) < \frac{M}{p}$ and $f_0 \in \sphere$ satisfy $\lpnorm{f_0^n - f_0} \to 0$, then  we have
    \[
        F_t[f_0^n] \to F_t[f_0] \text{ in } \Lp \text{ for each } t \ge 0.
    \]
\end{corollary}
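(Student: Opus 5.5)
Existence comes from the Proposition following \cref{ass:p_condition_and_rellich_kondrachov}, which is \cref{thm:existence_MF} applied to $\Phi$ and $\Psi=\frac1p\lpnorm{\cdot}^p$. So the real content is uniqueness and continuous dependence. The plan is to transfer both to the unnormalized flow for $\chp - L\Psi$, where \cref{thm:unique_existence_un_normalized} gives an $L^1$-Gr\"onwall contraction. Fix a mirror flow $f_t$ for $\Phi$ with $f_0\in\sphere$ and $\chp(f_0)<\frac Mp$. By \cref{lem:stay_in_sphere}, $f_t\in\sphere$, $\chp(f_t)\le\chp(f_0)$, and \eqref{eq:mirror_flow_derivative} holds. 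I will undo the normalization: set $a_t\coloneqq\int_0^t (L-p\chp(f_r))\,\dd r$ and $\tilde f_t\coloneqq \e^{a_t/(p-1)} f_t$, so that $J_p(\tilde f_t)=\e^{a_t}J_p(f_t)$. The map $t\mapsto J_p(\tilde f_t)$ is locally absolutely continuous, since it is a product of a locally absolutely continuous scalar function and a locally absolutely continuous $L^q$-curve. By the product rule and the $(p-1)$-homogeneity of $\partial\chp$ (which gives $\partial\chp(cf)=c^{p-1}\partial\chp(f)$ for $c>0$), we get
\[
\dt J_p(\tilde f_t)=\e^{a_t}\bigl(-\xi_t+p\chp(f_t)J_p(f_t)\bigr)+(L-p\chp(f_t))\e^{a_t}J_p(f_t)\in-\partial\chp(\tilde f_t)+LJ_p(\tilde f_t).
\]
Hence $\tilde f_t$ is a $\Psi$-mirror flow for $\chp-L\Psi$ (unconstrained, using \cref{lem:global_subgradient_formula} with $\lambda=-L$), and it starts at $f_0$.

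\textbf{Uniqueness.} Take two mirror flows $f_t,g_t$ for $\Phi$ with the same initial point $f_0$. Their lifts $\tilde f_t,\tilde g_t$ are mirror flows for $\chp-L\Psi$ from the same initial datum, so \cref{thm:unique_existence_un_normalized} gives $\tilde f_t=\tilde g_t$ for all $t$. Since $f_t,g_t\in\sphere$, normalizing recovers $f_t=\tilde f_t/\lpnorm{\tilde f_t}=g_t$. The renormalization factor is not needed here, because normalization is intrinsic. This defines $F_t[f_0]$.

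\textbf{Continuity.} Let $f_0^n\to f_0$ in $\Lp$ with $\sup_n\chp(f_0^n)\eqqcolon M'/p<M/p$. Then every $F_t[f_0^n]$ lies in $\Gamma^{M'}$ from \eqref{eq:gamma_m_prime}. I will argue by subsequences. Given any subsequence, \cref{lem:compactness_mirror_flows} produces a further subsequence and a flow $f_t\in\Gamma^{M'}$ with $F_t[f_0^{n(k)}]\to f_t$ in $\Lp$ for every $t$. At $t=0$ this limit must equal $f_0$, so $f_t$ is a mirror flow for $\Phi$ from $f_0$. By uniqueness, $f_t=F_t[f_0]$. Since every subsequence has a further subsequence with the same limit, the whole sequence converges, which gives $F_t[f_0^n]\to F_t[f_0]$ for each $t$.

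The main obstacle is the lifting step, namely checking rigorously that $\tilde f_t$ satisfies the mirror inclusion for $\chp-L\Psi$ almost everywhere. One point is that $t\mapsto\chp(f_t)$ is only monotone and bounded. That is enough for $a_t$ to be Lipschitz, so the product rule for $\e^{a_t}J_p(f_t)$ in $L^q$ is still justified almost everywhere. The other point is the homogeneity identity $\partial\chp(cf)=c^{p-1}\partial\chp(f)$, which I expect to follow directly from the $p$-homogeneity of $\chp$ in \cref{prop:properties_of_chp}. I will also need the lifted curve to meet the definition used in \cref{thm:unique_existence_un_normalized}, which is immediate once the inclusion holds.
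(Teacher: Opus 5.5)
Your proposal is correct and follows the paper's proof. You lift the normalized flow to a $\Psi$-mirror flow for $\chp - L\Psi$ using $a_t = \int_0^t (L - p\chp(f_r))\,\dd r$, invoke the $L^1$-Gr\"onwall uniqueness of \cref{thm:unique_existence_un_normalized} and renormalize, and then obtain continuous dependence from \cref{lem:compactness_mirror_flows} together with uniqueness. Your scaling factor $\e^{a_t/(p-1)} = (\e^{a_t})^{q/p}$ is the right one: the paper's literal $a_t^{q/p}$ vanishes at $t=0$ and must be read with $\e^{a_t}$ in place of $a_t$. Your product-rule justification and sub-subsequence argument spell out what the paper leaves implicit.
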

\begin{proof}
    We define the function
    $
    a_t \coloneqq \int_0^t (L - p\chp(f_r) )  \dd r
    $
    and the curve $\tilde{f}_t \coloneqq a_t^{\frac{q}{p}}f_t$.
    By \eqref{eq:mirror_flow_derivative} and the $(p-1)$-homogeneity of $\partial\chp$ and $J_p$, a simple calculation of the derivative $\dt J_p(\tilde{f}_t)$ shows that $\tilde{f}_t$ is a $\Psi$-mirror flow for $\chp - L\Psi$.
    Further, we have $f_t = \tilde{f}_t / \lpnorm{\tilde{f}_t}$ because of $\lpnorm{f_t} = 1$.
    Thus, \cref{thm:unique_existence_un_normalized} shows the unique existence of $\Psi$-mirror flow starting from $f_0$.
    The convergence of $F_t[f_0^n]$ to $F_t[f_0]$ is a direct consequence of the unique existence of $F_t[f_0]$ and \cref{lem:compactness_mirror_flows} for $\frac{M'}{p} \coloneqq \sup_{n\ge1} \chp(f_0^n)$.
\end{proof}

Since the maps $f \mapsto \partial \chp(f)$ and $f \mapsto J_p(f)$ are odd, it immediately holds that $- F_t[f_0]$ is a $\Psi$-mirror flow starting from $-f_0$, which is unique by \cref{cor:unique_existence_mirror_flow}.
This proves the following result.
\begin{corollary}
    Let $f_0 \in \sphere$ with $\chp(f_0) < \frac{M}{p}$.
    We have
    \[
        F_t[-f_0] = - F_t[f_0] \text{ for each } t \ge 0
    \]
\end{corollary}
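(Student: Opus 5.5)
The plan is to show that $g_t \coloneqq -F_t[f_0]$ is a $\Psi$-mirror flow for $\Phi$ starting from $-f_0$. The uniqueness part of \cref{cor:unique_existence_mirror_flow} then gives $g_t = F_t[-f_0]$. The corollary applies at $-f_0$ because $-f_0 \in \sphere$ and, by the $p$-homogeneity in \cref{prop:properties_of_chp}(iii), $\chp(-f_0) = \chp(f_0) < \frac{M}{p}$.

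First I check the regularity required in the definition of a mirror flow. Since $J_p$ is odd, $J_p(g_t) = -J_p(f_t)$ with $f_t \coloneqq F_t[f_0]$. Hence $t \mapsto J_p(g_t)$ is locally absolutely continuous, and $\dt J_p(g_t) = -\dt J_p(f_t)$ at every $t$ where the right-hand side exists, in particular for $\Leb$-a.e. $t>0$.

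Next I check the inclusion, which I expect to be the only substantive step. I need
\[
    -\partial_F\Phi(-f) = \partial_F\Phi(f) \quad \text{for } f \in \sphere \text{ with } \chp(f) < \tfrac{M}{p},
\]
that is, $\partial_F\Phi$ is odd. This follows directly from the evenness of $\Phi$: $\chp$ is even by \cref{prop:properties_of_chp}(iii), $\lpnorm{\cdot}$ is even, and the constraint set $E^{1,M}$ is symmetric. Consequently, if $\xi$ satisfies the defining $\liminf$ condition of $\partial_F\Phi$ at $f$, then substituting $y \mapsto -y$ shows that $-\xi$ satisfies it at $-f$. One can also read this off from \eqref{eq:subdifferential_Phi}, using that $\partial\chp$ and $J_p$ are odd and the cone $\R_{\ge0}J_p(f)$ changes sign with $f$.

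By \cref{lem:stay_in_sphere}, $f_t \in \sphere$ and $\chp(f_t) < \frac{M}{p}$ for all $t$, so the displayed identity applies at $f = f_t$. Then for a.e. $t$,
\[
    \dt J_p(g_t) = -\dt J_p(f_t) \in \partial_F\Phi(f_t) = -\partial_F\Phi(g_t).
\]
Finally, $g_0 = -f_0$. Uniqueness then gives $F_t[-f_0] = -F_t[f_0]$ for every $t \ge 0$.
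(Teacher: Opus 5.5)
Your proposal is correct and follows the paper's own argument: show that $-F_t[f_0]$ is a $\Psi$-mirror flow for $\Phi$ starting from $-f_0$ by an oddness argument, then conclude with the uniqueness in \cref{cor:unique_existence_mirror_flow}. The only cosmetic difference is that the paper invokes the oddness of $\partial\chp$ and $J_p$, whereas you obtain the oddness of $\partial_F\Phi$ directly from the evenness of $\Phi$; this is the same point.
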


\begin{theorem}[Existence of deformation map]\label{thm:deformation_map}
    Let $A \coloneqq \sphere \cap E^{M'}$ for some $M' \in (0, M)$.
    The map $F \colon [0, 1] \times A \to A$ is defined by the unique $\Psi$-mirror flow $F_t[f_0]$ for $\Phi$ for each $t \in [0, 1]$ and $f_0 \in A$.
    Then we have
    \begin{itemize}
        \item for each $t \in [0, 1]$, the map $F_t \colon A \to A$ is continuous and odd.
        \item for each $f_0 \in A$, it holds that
        \begin{equation}\label{eq:energy_dissipation_in_deformation}
            \Phi(F_1[f_0]) - \Phi(f_0) \le - \int_0^1 C_p\slope{\Phi}^2(F_t[f_0]) \dd t,
        \end{equation}
        where $C_p$ is a positive constant depending only on $p$.
    \end{itemize}
\end{theorem}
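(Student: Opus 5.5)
The plan is to read both items off the results already established for $\Psi$-mirror flows of $\Phi=\Phi_M$.

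\textbf{Step 1: $F$ is well defined.} Take $f_0 \in A=\sphere\cap E^{M'}$. Since $M'<M$, we have $\chp(f_0)\le \frac{M'}{p}<\frac{M}{p}$. So \cref{cor:unique_existence_mirror_flow} gives a unique flow $F_t[f_0]$. By \cref{lem:stay_in_sphere}, $F_t[f_0]\in\sphere$ and $t\mapsto\chp(F_t[f_0])$ is non-increasing. Hence $\chp(F_t[f_0])\le\chp(f_0)\le \frac{M'}{p}$, and $F_t[f_0]\in A$.

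\textbf{Step 2: continuity and oddness.} Oddness is the preceding corollary, $F_t[-f_0]=-F_t[f_0]$; note that $A$ is symmetric by the $p$-homogeneity of $\chp$. For continuity, let $f_0^n\to f_0$ in $A$. Then $\sup_n\chp(f_0^n)\le \frac{M'}{p}<\frac{M}{p}$, so the second half of \cref{cor:unique_existence_mirror_flow} gives $F_t[f_0^n]\to F_t[f_0]$ in $\Lp$ for each $t$.

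Sequential continuity suffices, since $A$ carries the metric topology of $\Lp$. If joint continuity in $(t,f_0)$ were needed, I would use \cref{lem:compactness_mirror_flows} with $t_k\to t$, combined with uniqueness through a subsequence-of-subsequence argument.

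\textbf{Step 3: the dissipation inequality.} Apply \cref{thm:energy_dissipation} with $\Psi=\frac1p\lpnorm{\cdot}^p$. By \cref{rem:lp_bregman}, $g(r)=\frac{q-1}{2}r^{q-2}$. Since $\norm{J_p(f_t)}_{L^q}=\lpnorm{f_t}^{p-1}=1$ on the sphere, $g(2\norm{\gradpsi(f_t)})=\frac{q-1}{2}2^{q-2}=:C_p>0$. This constant is finite because $q-2<0$ and the argument is $2$, not $0$. Hence
\[
-\dt\Phi(f_t)\ge C_p\slope{\Phi}^2(f_t)\quad\text{for }\Leb\text{-a.e. }t>0.
\]

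The main obstacle is integrating this pointwise inequality over $[0,1]$: $t\mapsto\Phi(f_t)$ is only known to be non-increasing, not absolutely continuous. I would argue as follows. For a non-increasing function, the a.e. derivative satisfies
\[
\int_0^1\bigl(-\dt\Phi(f_t)\bigr)\dd t\le\Phi(f_0)-\Phi(f_1),
\]
because the singular part of the associated measure is nonnegative. This gives \eqref{eq:energy_dissipation_in_deformation} directly. Measurability of $t\mapsto\slope{\Phi}(f_t)$ follows from the lower semicontinuity in \cref{prop:lower_semicontinuity_of_slope} together with the continuity of $t\mapsto f_t$, so the integral makes sense.
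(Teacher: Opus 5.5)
Your proposal is correct and follows the paper's proof. The range lies in $A$ by \cref{lem:stay_in_sphere}, continuity comes from \cref{cor:unique_existence_mirror_flow}, oddness from the preceding corollary, and the dissipation estimate from \cref{thm:energy_dissipation} together with \cref{rem:lp_bregman}. Two details you make explicit are ones the paper leaves implicit: the computation $C_p=g(2)$, using $\lqnorm{J_p(f_t)}=1$ on the sphere, and the integration of the a.e.\ inequality via Lebesgue's theorem for the non-increasing function $t\mapsto\Phi(F_t[f_0])$.
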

\begin{proof}
    \cref{lem:stay_in_sphere} ensures that the range of the map $F$ is contained in $A$.
    The continuity of the map $F_t \colon A \to A$ directly follows from \cref{cor:unique_existence_mirror_flow}.
    Finally, \eqref{eq:energy_dissipation_in_deformation} is a direct consequence of \cref{thm:energy_dissipation} and \cref{rem:lp_bregman}.
\end{proof}
\subsection{Ljusternik--Schnirelman principle for \texorpdfstring{$p$}{p}-Laplacian on metric measure spaces} \label{sec:ljusternik_schnirelman}
We give a proof of \cref{thm:min_max_eigenvalue_introduction}, whose more detailed formulation is given as \cref{thm:min_max_are_eigenvalues}.
The proof provided in this section follows a strategy similar to that in~\cite{AmbrosioHondaPortegies2018Continuity}. 
The key difference is the use of $\Psi$-mirror flows, which allows us to extend their result to the non-Hilbert case $p > 2$.

As in the previous section, we always assume that \cref{ass:p_condition_and_rellich_kondrachov} holds.

We first recall the definition of a topological dimension $\gamma$ for subsets of Banach spaces.

\begin{definition}[Krasnoselskii genus]
    Let $V$ be a Banach space and
    \[
        \F(V) \coloneqq \{ A \subset V \mid A \text{ is closed and symmetric}\}.
    \]
    Then, the function $\gamma \colon \F(V) \to [0, +\infty]$ is defined as follows.
    Let $A \in \F(V)$ be non-empty.
    If there exists $m \in \N$ and an odd continuous map $h \in C^0(A; \R^m)$, then we set
    \[
        \gamma(A) \coloneqq \inf \{m \mid \exists h \in C^0(A; \R^m), h \text{ is odd}\}.
    \]
    Otherwise, we set $\gamma(A) \coloneqq +\infty$.
    Moreover, we set $\gamma(\emptyset) \coloneqq 0$.
    This function $\gamma$ is called the \textit{Krasnoselskii genus}.
\end{definition}

For ease of reference, we recall the properties of the Krasnoselskii genus.
\begin{proposition}[cf. {\cite[Proposition~5.4]{struwe_variational}}]\label{prop:properties_genus}
Let $A, A_1, A_2 \in \F(V)$ and $h \in C^0(A; V)$ be odd and continuous.
Then, the following hold:
\begin{enumerate}
    \item $\gamma(A) \ge 0$; $\gamma(A) = 0$ if and only if $A = \emptyset$.
    \item $A_1 \subset A_2$ implies $\gamma(A_1) \le \gamma(A_2)$.
    \item $\gamma(A_1 \cup A_2) \le \gamma(A_1) + \gamma(A_2)$.
    \item $\gamma(A) \le \gamma \left( \overline{h(A)}\right)$.
    \item If $A$ is compact and $0 \notin A$, then $\gamma(A) < +\infty$ and there exists a symmetric open neighborhood $N$ of $A$ in $V$ such that $\gamma(\overline{N}) = \gamma(A)$.
\end{enumerate}
\end{proposition}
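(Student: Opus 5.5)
We prove the five items in order. Each follows from elementary constructions with odd continuous maps, plus two auxiliary tools: the Tietze (Dugundji) extension theorem and the odd symmetrization $\tilde h(x) \coloneqq \frac12\bigl(h(x) - h(-x)\bigr)$.

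We read the definition of $\gamma$ in its standard form: the maps $h$ are required to take values in $\R^m \setminus \{0\}$. Without this, the constant map $0$ would be admissible and the proposition would fail. We also use repeatedly that an odd continuous $h \colon A \to \R^m$ on a closed symmetric $A \subset V$ extends to an odd continuous map $V \to \R^m$: extend each component by Tietze, then symmetrize. Symmetrization does not change $h$ on $A$, because $h$ is already odd there.

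\textbf{Items (i), (ii), (iv).}
\begin{itemize}
\item For (i), $\gamma(\emptyset) = 0$ by definition. If $A \neq \emptyset$, no map into $\R^0 \setminus \{0\} = \emptyset$ exists, so $\gamma(A) \ge 1$.
\item For (ii), restrict an admissible map $h$ on $A_2$ to $A_1$.
\item For (iv), we may assume $\gamma(\overline{h(A)}) = m < +\infty$, and take an odd $g \colon \overline{h(A)} \to \R^m \setminus \{0\}$. Then $g \circ h$ is odd, continuous and nonvanishing on $A$, so $\gamma(A) \le m$.
\end{itemize}

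\textbf{Item (iii).} Assume both genera are finite, with admissible maps $h_i \colon A_i \to \R^{m_i} \setminus \{0\}$. Extend each to an odd continuous $\tilde h_i \colon V \to \R^{m_i}$ as above. Set $h \coloneqq (\tilde h_1, \tilde h_2)$ on $A_1 \cup A_2$. This map is odd and continuous into $\R^{m_1 + m_2}$. It never vanishes, because on $A_i$ its $i$-th block is $h_i \neq 0$.

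\textbf{Item (v).} This is the only part needing a construction.
\begin{enumerate}
\item Since $0 \notin A$, for each $x \in A$ choose $r_x \in (0, \norm{x})$. Then $B(x, r_x) \cap B(-x, r_x) = \emptyset$.
\item By compactness, finitely many balls $B(x_i, r_i)$, $i = 1, \dots, k$, cover $A$.
\item Let $\rho_i(y) \coloneqq \operatorname{dist}\bigl(y, V \setminus B(x_i, r_i)\bigr)$ and $h_i(y) \coloneqq \rho_i(y) - \rho_i(-y)$. Each $h_i$ is continuous and odd.
\item For $y \in A$, pick $i$ with $y \in B(x_i, r_i)$. By the disjointness in step 1, $-y \notin B(x_i, r_i)$. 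Hence $h_i(y) = \rho_i(y) > 0$.
\item So $h \coloneqq (h_1, \dots, h_k)$ is admissible on $A$, and $\gamma(A) \le k < +\infty$.
\end{enumerate}

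For the neighbourhood statement, let $m \coloneqq \gamma(A)$ and take an admissible $h \colon A \to \R^m \setminus \{0\}$. Extend it to an odd continuous $\tilde h$ on $V$.
\begin{itemize}
\item By compactness of $A$ we have $\min_A |\tilde h| = \delta > 0$.
\item By continuity and compactness, there is $\eps > 0$ such that $|\tilde h| \ge \delta/2$ on the closed $\eps$-neighbourhood of $A$.
\item Let $N$ be the open $\eps/2$-neighbourhood of $A$. It is symmetric because $A$ is symmetric and $\tilde h$ is odd.
\item $\overline{N}$ lies inside the set where $\tilde h \neq 0$, so $\tilde h|_{\overline N}$ is admissible and $\gamma(\overline N) \le m$.
\item Item (ii) gives the reverse inequality $\gamma(\overline N) \ge \gamma(A) = m$.
\end{itemize}

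The main subtlety is item (v): choosing the radii so that the balls about $x_i$ and $-x_i$ are disjoint, and extracting the uniform lower bound on $|\tilde h|$ near $A$. Both use $0 \notin A$ and compactness essentially.
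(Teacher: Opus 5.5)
Your argument is correct and is essentially the standard proof of Struwe's Proposition~5.4, which the paper cites without giving its own. The steps match: Tietze extension plus odd symmetrization for (iii), composition for (iv), a finite cover of $A$ by balls disjoint from their antipodal balls for the finiteness in (v), and a uniform neighbourhood of $A$ on which the odd extension stays away from $0$ for the rest of (v). You are also right that the paper's definition of $\gamma$ must be read with maps into $\R^m \setminus \{0\}$, as in Struwe; as literally written, the zero map is admissible and $\gamma$ becomes trivial.
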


In this study, we consider $\Lp$ as $V$.
For each $k \in \N$, we define the collection of all $k$-dimensional closed symmetric subsets as 
\[
    \F_k \coloneqq \{ A \subset \sphere \mid A \text{ is closed and symmetric with } \gamma(A) \ge k \}.
\]
Then we define the min-max spectrum of $\chp$ as follows
\[
    \frac{\lambda_k}{p} \coloneqq \inf_{A \in \F_k}\sup_{A} \chp.
\]
To show that $\lambda_k$ is an eigenvalue of the $p$-Laplacian if it is finite, we prepare some results.

Let $\lambda > 0$ and $M \coloneqq \lambda + 1$.
We simply write $\Phi_M$ as $\Phi$.
By the compact embedding assumption of $\sobolev$, we can show the following lemma, which plays the role of the Palais--Smale condition in our setting.
\begin{lemma}\label{lem:palais-smale}
    For each $M > 0$ and $T \ge 0$, the set
    \[
        S^{M, T} \coloneqq \left\{ f \in \sphere \mid \chp(f) \le \frac{M}{p}, \slope{\Phi}(f) \le T \right\}
    \]
    is compact in the following sense:
    for any $\{ f_n \}_{n\ge1} \subset S^{M,T}$, there exists a subsequence $\{f_{n(k)}\}_{k \ge 1}$ and $f \in S^{M, T}$ such that
    $\lpnorm{f_{n(k)} - f} \to 0$ and
    \[
        \chp(f_{n(k)}) \to \chp(f).
    \]
\end{lemma}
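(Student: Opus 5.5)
Take $\{f_n\}_{n\ge1}\subset S^{M,T}$. Since $\lpnorm{f_n}=1$ and $\chp(f_n)\le M/p$, the compact embedding in \cref{ass:p_condition_and_rellich_kondrachov} gives a subsequence, not relabeled, and $f\in\Lp$ with $\lpnorm{f_n-f}\to0$. Then $f\in\sphere$, and lower semicontinuity of $\chp$ (\cref{prop:properties_of_chp}) gives $\chp(f)\le\liminf\chp(f_n)\le M/p$. Lower semicontinuity of $\slope{\Phi}$ (\cref{prop:lower_semicontinuity_of_slope}) gives $\slope{\Phi}(f)\le T$. Hence $f\in S^{M,T}$.

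It remains to prove $\chp(f_n)\to\chp(f)$, which is the main step; only $\limsup_n\chp(f_n)\le\chp(f)$ is needed. I would use the global slope formula \eqref{eq:global_formula} with $\lambda=-L$, evaluated at $x=f_n$ and $y=f$:
\[
\Phi(f_n)-\Phi(f)-L\,\bregman{f}{f_n}\le \slope{\Phi}(f_n)\,\lpnorm{f-f_n}\le T\lpnorm{f-f_n}.
\]
Since $f_n,f\in\sphere$ lie in $E^{1,M}$, we have $\Phi(f_n)=\chp(f_n)-L/p$ and $\Phi(f)=\chp(f)-L/p$. The inequality therefore becomes
\[
\chp(f_n)\le\chp(f)+L\,\bregman{f}{f_n}+T\lpnorm{f-f_n}.
\]
As $n\to\infty$, the last term tends to $0$. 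For the Bregman term, $\Psi$ and $J_p=\nabla\Psi$ are continuous on $\Lp$, so $\bregman{f}{f_n}=\Psi(f)-\Psi(f_n)-\inner{J_p(f_n)}{f-f_n}\to0$ because $J_p(f_n)$ stays bounded in $L^q$. Taking $\limsup$ gives $\limsup_n\chp(f_n)\le\chp(f)$. Combined with the $\liminf$ bound, $\chp(f_n)\to\chp(f)$.

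The only point needing care is that \eqref{eq:global_formula} is valid here, meaning that $\Phi$ is $(-L)$-convex for $\Psi$. This holds because $\Phi+L\Psi=\chp+\delta_{E^{1,M}}$ is convex. One also needs $f\in\domain{\Phi}$, which was established above. If $f=f_n$ the inequality is trivial.
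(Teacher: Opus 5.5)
Your proposal is correct and follows the paper's proof essentially step by step. You get compactness from the Rellich--Kondrachov assumption, show the limit lies in $S^{M,T}$ by lower semicontinuity of $\chp$ and $\slope{\Phi}$, and obtain the bound $\chp(f_n)\le\chp(f)+T\lpnorm{f-f_n}+L\,\bregman{f}{f_n}$ from the global slope formula \eqref{eq:global_formula} with $\lambda=-L$. The paper writes this Bregman term as $D_\Psi(f_n,f)$, but your orientation $\bregman{f}{f_n}$ is the one the formula actually produces, and either term vanishes as $f_n\to f$.
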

\begin{proof}
    Since $E^{1, M}$ defined by \eqref{eq:ball_sublevel_set} is compact, after passing to a subsequence and relabeling, there exists $f \in \sphere$ such that $\lpnorm{f_n - f} \to 0$.
    By the lower semicontinuity of $\chp$ and $\slope{\Phi}$ (see \cref{prop:lower_semicontinuity_of_slope}), we have $f \in S^{M, T}$.
    Further, by \cref{prop:lower_semicontinuity_of_slope}, we get
    \[
        \chp(f_n) \le \chp(f) +  T \lpnorm{f_n - f} + L D_{\Psi}(f_n, f),
    \]
    where $D_{\Psi}$ is given by~\eqref{eq:bregman_div}.
    Combining this with the lower semicontinuity of $\chp$ completes the proof.
\end{proof}

As part of the preliminaries, we define several types of sets containing critical points.
We define a set of critical points $K_\lambda$ as follows
\begin{equation}\label{eq:critical_points}
    K_\lambda \coloneqq \left\{ f \in \sphere \mid \chp(f) = \frac{\lambda}{p}, \slope{\Phi}(f) = 0 \right\}.
\end{equation}
Thanks to \cref{lem:palais-smale}, the set $K_\lambda$ is compact.
For each $r >0$, we define a tubular neighborhood $U_{\lambda, r}$ of $K_\lambda$ to be that
\[
    U_{\lambda, r} \coloneqq \{f \in L^p(X, \m) \mid \lpnorm{f - g} < r \text{ for some } g \in K_\lambda\}.
\]
It is clear that for any open neighborhood $N$ of $K_\lambda$ there exists $r > 0$ such that
\begin{equation}\label{eq:open_nbh_inclusions}
    K_\lambda \subset U_{\lambda, r} \subset N.
\end{equation}
For each $\delta \in (0, 1)$ and $a > 0$, we define another set $N_{\lambda, \delta, a}$ as follows
\[
    N_{\lambda, \delta, a} \coloneqq \left\{f \in \sphere \mid \abs{\chp(f) - \frac{\lambda}{p}} \le \frac{\delta}{p}, \slope{\Phi}(f) \le 2\delta a \right\}.
\]
From \cref{lem:palais-smale}, $N_{\lambda, \delta, a}$ is closed and for any open neighborhood $U$ of $K_\lambda$ and $a > 0$ there exists $\delta > 0$ such that
\begin{equation}\label{eq:closed_set_inclusions}
    K_\lambda \subset N_{\lambda, \delta, a} \subset U.
\end{equation}

Finally, for any open sets $U, N \subset L^p(X, \m)$ with $\overline{U} \subset N$, we define the collection of mirror flows passing thorough $N \setminus \overline{U}$ as follows
\[
    \Gamma(U, N) \coloneqq \{ f_t \in \Gamma^M \mid f_t \in \overline{U} \text{ for some } t \in [0, 1) \text{ and }  f_1 \in N^c\},
\]
where $\Gamma^M$ is defined by \eqref{eq:gamma_m_prime}.
For each $f \in \Gamma(U, N)$, we set
\[
    \tconsume{\overline{U}}(f) \coloneqq \max \{t \in [0, 1] \mid f_t \in \overline{U}\} < 1,
\]
and
\[
    \tconsume{N^c}(f) \coloneqq \min \{t \in [\tconsume{\overline{U}}(f), 1] \mid f_t \in N^c\}.
\]
It is clear that $\tconsume{\overline{U}}(f)$ and $\tconsume{N^c}(f)$ exist, that $f_t \in N \setminus \overline{U}$ for each $\tconsume{\overline{U}}(f) < t < \tconsume{N^c}(f)$, and that
\[
    \tconsume{\overline{U}}(f) < \tconsume{N^c}(f),
\]
holds.
We define the time 
\[
    \tconsume{}(f) \coloneqq \tconsume{N^c}(f) - \tconsume{\overline{U}}(f) > 0.
\]
Roughly speaking, $f_t$ takes at least $\tconsume{}(f)$ time to move from $\partial \overline{U}$ to $\partial N^c$.

\begin{lemma}
    We have
    \[
        \tmin(U, N) \coloneqq \inf_{f \in \Gamma(U, N)} \tconsume{}(f)  > 0,
    \]
    where we set $\tmin(U, N) \coloneqq +\infty$ if $\Gamma(U, N)$ is empty.
\end{lemma}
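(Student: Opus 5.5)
We argue by contradiction. Suppose $\Gamma(U,N)\neq\emptyset$ and $\tmin(U,N)=0$. Then there is a sequence $f^n\in\Gamma(U,N)$ with $\tconsume{}(f^n)\to0$. Write $s_n\coloneqq\tconsume{\overline U}(f^n)$ and $t_n\coloneqq\tconsume{N^c}(f^n)$, so that $t_n-s_n\to0$. Passing to a subsequence, we may assume $s_n\to s\in[0,1]$, and then also $t_n\to s$.

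The natural first tool is \cref{lem:compactness_mirror_flows} applied to $\Gamma^M$. It yields a subsequence and a limit flow $f\in\Gamma^M$ with $\lpnorm{f^{n}_{r_n}-f_r}\to0$ whenever $r_n\to r$. Applying this with $r_n=s_n$ and with $r_n=t_n$ gives
\[
f^n_{s_n}\to f_s \quad\text{and}\quad f^n_{t_n}\to f_s .
\]
We have $f^n_{s_n}\in\overline U$, which is closed, so $f_s\in\overline U$. We also have $f^n_{t_n}\in N^c$, which is closed since $N$ is open, so $f_s\in N^c$. But $\overline U\subset N$, so this is a contradiction.

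The one point needing care is that \cref{lem:compactness_mirror_flows} is stated for $\Gamma^{M'}$, which involves initial data with $\chp(f_0)\le M'/p$. For $\Gamma^M$ we have $M'=M$, which is allowed since $0<M'\le M$. Its proof only uses the bounds $\Phi\in[-L,M]$ and the energy dissipation estimate, and both still hold when $M'=M$. If uniqueness or well-posedness were needed at the boundary value $\chp(f_0)=M/p$, this could be delicate. However, the compactness lemma only needs existence of a limiting mirror flow, which it supplies through the exponential EVI, so no well-posedness is required.

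The main obstacle is therefore just checking that the limit in \cref{lem:compactness_mirror_flows} holds along varying times $r_n\to r$, uniformly on the subsequence. That statement is part of the lemma, so we invoke it directly.
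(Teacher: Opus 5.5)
Your argument is correct and is essentially the paper's proof: both take a sequence in $\Gamma(U,N)$ realizing the infimum, apply \cref{lem:compactness_mirror_flows} on $\Gamma^M$ with convergence along converging times to obtain a limit flow lying in $\overline{U}$ and in $N^c$ at the limiting times, and conclude from $\overline{U}\cap N^c=\emptyset$. The paper states this directly: the limiting times must differ, so $\tmin(U,N)=\tconsume{N^c}-\tconsume{\overline{U}}>0$. Your contradiction version, in which both times collapse to the same $s$, is the same argument in contrapositive form.
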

\begin{proof}
    Without loss of generality, we can assume that $\tmin(U, N) < +\infty$.
    We take a minimizing sequence $f^n \in \Gamma(U, N)$ such that $\lim_{n \to + \infty} \tconsume{}(f^n) = \tmin(U, N)$.
    Thanks to \cref{lem:compactness_mirror_flows} and the compactness of the interval $[0, 1]$, after passing to a subsequence and relabeling, there exist $\tconsume{\overline{U}}, \tconsume{N^c} \in [0, 1]$ and a $\Psi$-mirror flow $f \in \Gamma^M$ such that $\tconsume{\overline{U}}(f^n) \to \tconsume{\overline{U}}$, $\tconsume{N^c}(f^n) \to \tconsume{N^c}$,  $f_{\tconsume{\overline{U}}} \in \overline{U}$ and $f_{\tconsume{N^c}} \in N^c$.
    Since $\overline{U} \cap N^c = \emptyset$, we have $\tmin(U, N) = \tconsume{N^c} - \tconsume{\overline{U}} > 0$.
\end{proof}

\begin{theorem}\label{thm:min_max_are_eigenvalues}
    Let $k ,\ell \in \N$.
    If
    \[
        \lambda = \lambda_k = \lambda_{k+1} = \cdots = \lambda_{k + \ell - 1}
    \]
    is finite, then it holds that
    \[
        \gamma(K_{\lambda}) \ge \ell.
    \]
    In particular, if $\ell > 1$, there exist infinitely many eigenfunctions for $\lambda$.
\end{theorem}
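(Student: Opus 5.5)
We argue by contradiction, following the standard Ljusternik--Schnirelman scheme of~\cite{AmbrosioHondaPortegies2018Continuity}, with the deformation map of \cref{thm:deformation_map} in place of the Hilbertian gradient flow. Fix $M \coloneqq \lambda + 1$ and $\Phi = \Phi_M$. Suppose $\gamma(K_\lambda) \le \ell - 1$. By \cref{lem:palais-smale}, $K_\lambda$ is compact, and $0 \notin K_\lambda \subset \sphere$. So \cref{prop:properties_genus}(v) gives a symmetric open neighborhood $N$ of $K_\lambda$ with $\gamma(\overline{N}) = \gamma(K_\lambda) \le \ell-1$; if $K_\lambda = \emptyset$ we take $N = \emptyset$. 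Using \eqref{eq:open_nbh_inclusions}, we choose symmetric open sets $U \subset \overline{U} \subset N$ containing $K_\lambda$, for instance $U = U_{\lambda,r}$ and $N \supset U_{\lambda,2r}$. Let $t_0 \coloneqq \min\{\tmin(U,N), 1\} > 0$ by the preceding lemma. Using \eqref{eq:closed_set_inclusions}, we pick $a > 0$ and then $\delta \in (0,1)$ small so that $N_{\lambda,\delta,a} \subset U$. We shall tune $a$ so that $C_p (2\delta a)^2 t_0 > 2\delta/p$, i.e.\ $a^2 > (2 p C_p \delta t_0)^{-1}$. The dependence of $\delta$ on $a$ must be handled: we first fix $a$ large relative to $t_0$ (note $t_0$ depends only on $U,N$, not on $a,\delta$), then choose $\delta$, and shrink $\delta$ further if necessary, since the inequality only improves as $\delta \to 0$ when $a$ is fixed with $a^2\delta$ large. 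To be safe, we instead demand the dissipation bound $C_p (2\delta a)^2 t_0 \ge 2\delta/p$ with $a \coloneqq (p C_p t_0)^{-1/2}$ independent of $\delta$. This works because $4\delta^2 a^2 t_0 C_p$ versus $2\delta/p$ needs $\delta a^2 \gtrsim 1$, so we use instead the threshold $\slope{\Phi} \le 2\delta a$ with $a$ proportional to $\delta^{-1/2}$; \eqref{eq:closed_set_inclusions} still applies for each fixed $a$ after rechecking via \cref{lem:palais-smale} that $N_{\lambda,\delta,c\delta^{-1/2}} \subset U$ for small $\delta$, since $\delta \cdot \delta^{-1/2} \to 0$.

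Next we take $A \in \F_{k+\ell-1}$ with $\sup_A p\chp \le \lambda + \delta$. Then $B \coloneqq \overline{A \setminus N}$ is closed and symmetric with $\gamma(B) \ge \gamma(A) - \gamma(\overline N) \ge k$ by \cref{prop:properties_genus}(ii),(iii). We may assume $\lambda+\delta < M$, so $B \subset \sphere \cap E^{M'}$ with $M' < M$. We apply the deformation $F_1$ from \cref{thm:deformation_map}. Since $F_1$ is odd and continuous, \cref{prop:properties_genus}(iv) gives $\overline{F_1(B)} \in \F_k$, and closure does not increase $\sup\chp$ by lower semicontinuity. So it suffices to show $p\chp(F_1[f_0]) \le \lambda - \delta$ for every $f_0 \in B$, which contradicts the definition of $\lambda_k = \lambda$. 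Since $\lpnorm{F_t[f_0]} = 1$ by \cref{lem:stay_in_sphere}, $\Phi(F_t[f_0]) = \chp(F_t[f_0]) - L/p$, so energy decrements of $\Phi$ equal those of $\chp$.

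The main step is the case analysis for the trajectory $f_t = F_t[f_0]$, $t\in[0,1]$. Suppose $p\chp(f_1) > \lambda - \delta$. Since $\chp$ is nonincreasing, $|p\chp(f_t) - \lambda| \le \delta$ for all $t \in [0,1]$. In particular, whenever $f_t \notin U$ we have $f_t \notin N_{\lambda,\delta,a}$, hence $\slope{\Phi}(f_t) > 2\delta a$.

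\emph{Case 1:} $f_t \notin \overline U$ for all $t\in[0,1]$. Then \eqref{eq:energy_dissipation_in_deformation} gives $\chp(f_0) - \chp(f_1) \ge C_p (2\delta a)^2 > 2\delta/p$ once $a$ is chosen appropriately, which is a contradiction.

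\emph{Case 2:} $f_t \in \overline U$ for some $t$. Since $f_0 \notin N$, we must rule out both $f_1 \in N$ and $f_1 \notin N$. If $f_1 \in N^c$, the trajectory lies in $\Gamma(U,N)$, so it spends time at least $t_0$ in $N \setminus \overline U$, where the slope exceeds $2\delta a$. If instead $f_1 \in N$, we apply the reversed argument from the entry: $f_0 \in N^c$ and the trajectory reaches $\overline U$, so it crosses $N\setminus\overline U$, and a time-reversed analogue of the lemma (same compactness proof via \cref{lem:compactness_mirror_flows}) gives a crossing time of at least $t_0$. In both subcases the dissipation is at least $C_p(2\delta a)^2 t_0 > 2\delta/p$, again a contradiction.

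Hence $\gamma(K_\lambda)\ge \ell$. The main obstacle I expect is the consistent choice of constants $a,\delta$ relative to $t_0$. As a fallback, I would redefine $N_{\lambda,\delta,a}$ with slope threshold $\sqrt{\delta}\,a$, whose inclusion in $U$ for small $\delta$ follows from \cref{lem:palais-smale} by contradiction. Finally, if $\ell>1$ then $\gamma(K_\lambda)\ge 2$, so $K_\lambda$ is infinite (a finite symmetric set avoiding $0$ has genus $1$). By \cref{prop:critical_points_are_eigen_vectors}, every element of $K_\lambda$ is an eigenfunction for $\lambda$, which gives infinitely many eigenfunctions.
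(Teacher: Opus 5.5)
Your proof is correct, but it is organized differently from the paper's. The paper argues directly: it deforms all of $A$ and shows $F_1(A)\subset E^{1,\lambda-\delta}\cup N$. It therefore only has to control trajectories that \emph{end} outside $N$, and for those a visit to $\overline{U}$ forces the forward crossing $\overline{U}\to N^c$ measured by $\tmin(U,N)$. The excision of $N$ comes only afterwards, through $\gamma(\overline{N})\ge\gamma(\overline{F_1(A)})-\gamma(E^{1,\lambda-\delta})\ge\ell$.

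You excise first ($B=A\setminus N$, $\gamma(B)\ge k$) and must then push every point of $B$ below level $\lambda-\delta$. Such trajectories start in $N^c$ and may enter $N$, so what you really need is the backward crossing: from $N^c$ at time $0$ to the first hitting time of $\overline{U}$. That time-reversed lemma is proved by the same compactness argument via \cref{lem:compactness_mirror_flows}. Its constant is not a priori $\tmin(U,N)$, however, so take $t_0$ to be the minimum of the two constants and $1$. Also, since $f_0\notin N$ for every $f_0\in B$, the backward crossing alone settles Case~2. The split according to whether $f_1\in N$ is unnecessary, and the forward lemma is then not needed at all.

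Your route buys a cleaner ending: $\overline{F_1(B)}\in\F_k$ with $\sup p\chp\le\lambda-\delta$ contradicts $\lambda_k=\lambda$ directly, with no genus estimate for a sublevel set (in the paper, $E^{1,\lambda-\delta}$ has to be tacitly intersected with $\sphere$). The paper's route uses the crossing lemma exactly as stated and avoids the contradiction hypothesis.

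On the constants, discard your first attempt. With threshold $2\delta a$ and $a$ fixed, the guaranteed dissipation $4C_pa^2t_0\delta^2$ is of order $\delta^2$ and cannot exceed $2\delta/p$ for small $\delta$; shrinking $\delta$ hurts rather than helps. Your fallback, a slope threshold of order $\sqrt{\delta}$ with the constant fixed before $\delta$, is the right choice, and its inclusion in $U$ for small $\delta$ follows from \cref{lem:palais-smale} as you say. This is in fact what the paper's proof uses: it writes $\slope{\Phi}^2\ge 2a\delta$ off $N_{\lambda,\delta,a}$, with $a$ chosen before $\delta$.
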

\begin{proof}
    By \cref{prop:properties_genus}, there exists an open neighborhood $N$ of $K_\lambda$ with $\gamma(\overline{N}) = \gamma(K_\lambda)$.
    Further, by \eqref{eq:open_nbh_inclusions} we have $K_\lambda \subset \overline{U_{\lambda, r}} \subset N$ for some $r > 0$.
    For $\tmin(U_{\lambda, r}, N)$ and $C_p$ as in \cref{thm:deformation_map}, we set 
    \[
        a \coloneqq \max\{\tmin(U_{\lambda, r}, N)^{-1}C_p^{-1}, C_p^{-1}\} > 0,
    \]
    where we set $(+\infty)^{-1} \coloneqq 0$.
    Then, applying \eqref{eq:closed_set_inclusions} to $U_{\lambda, r}$ and $a$, there exists $\delta > 0$ such that
    \[
        K_\lambda \subset N_{\lambda, \delta, a} \subset U_{\lambda, r} \subset N.
    \]

    Choose a set $A \in \F_{k + \ell - 1}$ with
    \[
        \sup_{A} \chp \le \frac{\lambda + \delta}{p}.
    \]
    We now show that the deformation map $F$ in \cref{thm:deformation_map} satisfies
    \begin{equation}\label{eq:deformation_decrease_energy}
        F_1(A) \subset E^{1, \lambda - \delta} \cup N.
    \end{equation}
    Arguing by contradiction, suppose that there exists $f_0 \in A$ satisfying $\chp(F_1[f_0]) > \frac{\lambda - \delta}{p}$ with $F_1[f_0] \notin N$.
    Note that, by monotonicity we see $\abs{\chp(F_t[f_0]) - \frac{\lambda}{p}} \le \frac{\delta}{p}$ for each $t \in [0, 1]$, and then $F_t[f_0] \notin N_{\lambda, \delta, a}$ if and only if $\slope{\Phi}^2(F_t[f_0]) \ge 2a\delta$.
    If we have $F_t[f_0] \notin N_{\lambda, \delta, a}$ for all $t \in [0, 1]$, the energy dissipation inequality \eqref{eq:energy_dissipation_in_deformation} gives
    \[
        \chp(F_1[f_0]) \le \chp(f_0) - 2C_p a\delta \le \frac{\lambda - \delta}{p}.
    \]
    If we see $F_t[f_0] \in N_{\lambda, \delta, a}$ for some $t \in [0, 1]$, by $F_t[f_0] \in \Gamma(U_{\lambda, r} N)$ we have
    \[
        \chp(F_1[f_0]) - \chp(f_0)
        \le  - \int^{\tconsume{N^c}(F_t[f_0])}_{\tconsume{\overline{U}_{\lambda, r}}(F_t[f_0])} C_p \slope{\Phi}^2(F_t[f_0]) \dd t
        \le - 2 C_p a \delta \tconsume{}(F_t[f_0])
        \le - 2\delta.
    \]
    This proves the claim \eqref{eq:deformation_decrease_energy}.

    It directly follows from the definition of $\lambda_k$ that
    \[
        \gamma(E^{1, \lambda - \delta}) \le k - 1.
    \]
    Thus, (iii) and (iv) of \cref{prop:properties_genus} imply
    \[
        \gamma(\overline{N})
        \ge \gamma(E^{1, \lambda - \delta} \cup \overline{N}) - \gamma(E^{1, \lambda - \delta})
        \ge \gamma\left(\overline{F_1(A)}\right) - (k - 1)
        \ge \gamma(A) - k + 1
        \ge \ell.
    \]
    This shows that $K_\lambda \neq \emptyset$, and $K_\lambda$ is infinite if $\ell > 1$ by the very definition of $\gamma$.
\end{proof}
\subsection{Approximation of eigenvalues}\label{sec:approximation_eigenvalues}
In this section, we give the proof of \cref{thm:approximation_eigenvalue_introduction}, which is divided into several steps.

We always assume that \cref{ass:p_condition_and_rellich_kondrachov} holds
and that 
\[
    \sphere_0 \coloneqq
        \sphere \cap \domain{\chp} \cap \left\{f \in \Lp \mid \int_X |f|^{p-2}f \dd \m = 0\right\} \neq \emptyset.
\]
We define
\[
    \widetilde{\lambda}_{1,p} \coloneqq \inf_{f \in \sphere_0} p\chp(f).
\]
\begin{remark}
    The Dirichlet case is obtained by replacing $\sphere_0$ with $\sphere \cap \domain{\chp}$ throughout all the arguments in this section.
\end{remark}

\begin{remark}
    We use the notation $\widetilde{\lambda}_{1,p}$ to indicate that, in the cases of interest, this eigenvalue plays the role of the first non-zero eigenvalue.
    In the case of the Neumann boundary condition, the first eigenvalue $\lambda_{1,p}$ is trivial and it is attained by constant functions.
    On the other hand, in the case of Dirichlet boundary condition, the first eigenvalue is non-trivial in many cases.
\end{remark}

We fix $f_0 \in \sphere_0$, $M \coloneqq p(\chp(f_0) + 1)$ and the $\Psi$-mirror flow $f_t$ for $\Phi$ starting from $f_0$, where $\Phi$ stands for $\Phi_M$ given by \eqref{eq:Phi}; we suppress the subscript $M$ for simplicity.
Recall that $f_t$ is unique due to \cref{cor:unique_existence_mirror_flow}.

We define the $\omega$-limit set $\omega(f_t)$ of $f_t$ as follows
\[
    \omega(f_t) \coloneqq \{f_\ast \in \Lp \mid \exists t_n \to +\infty \text{ such that } \lpnorm{f_{t_n} - f_\ast} \to 0\}.
\]

By \cref{ass:p_condition_and_rellich_kondrachov}, we can see that $\omega(f_t) \neq \emptyset$.
Further, by the lower semicontinuity and monotonicity of $t \mapsto \chp(f_t)$ (see \cref{lem:stay_in_sphere}), the following limit
\begin{equation}\label{eq:energy_limit}
    \lambda_\ast \coloneqq \lim_{t \to +\infty} p \chp(f_t)
\end{equation}
exists and is finite.
We define the set $K_{\lambda_\ast}$ by \eqref{eq:critical_points}.
From \cref{prop:critical_points_are_eigen_vectors}, if $K_{\lambda_\ast}$ is nonempty, then $\lambda_\ast$ is an eigenvalue of $\plap$ and any element in $K_{\lambda_\ast}$ is an eigenvector for $\lambda_\ast$.

\begin{proposition}
    It holds that
    \[
        \int_X J_p(f_t) \dd \m = 0 \text{ for each } t \ge 0.
    \]
\end{proposition}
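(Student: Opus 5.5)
The plan is to test the mirror inclusion \eqref{eq:mirror_flow_derivative} from \cref{lem:stay_in_sphere} against the constant function $\mathbf{1}$. This is allowed because $\m(X)<+\infty$, so $\mathbf{1}\in\Lp$. Set
\[
    m(t) \coloneqq \int_X J_p(f_t)\dd\m = \inner{J_p(f_t)}{\mathbf{1}}.
\]
The curve $t\mapsto J_p(f_t)$ is locally absolutely continuous in $L^q$, so $m$ is locally absolutely continuous, and $m(0)=0$ because $f_0\in\sphere_0$.

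By \cref{lem:stay_in_sphere}, for $\Leb$-a.e.\ $t>0$ there is some $\xi_t\in\partial\chp(f_t)$ with
\[
    \dt J_p(f_t) = -\xi_t + p\chp(f_t)J_p(f_t).
\]
Pairing with $\mathbf{1}$ gives
\[
    m'(t) = -\inner{\xi_t}{\mathbf{1}} + p\chp(f_t)\,m(t).
\]
The key point is that $\inner{\xi}{\mathbf{1}}=0$ for every $\xi\in\partial\chp(f)$. To see this, use the subgradient inequality with the test functions $f\pm c\mathbf{1}$ for $c\in\R$. By \cref{prop:properties_of_chp}(ii), $\chp(f\pm c\mathbf{1})=\chp(f)$, so
\[
    \pm c\inner{\xi}{\mathbf{1}}\le 0 \quad\text{for all } c\in\R,
\]
which forces $\inner{\xi}{\mathbf{1}}=0$. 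Hence $m'(t)=p\chp(f_t)\,m(t)$ for a.e.\ $t$.

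Since $0\le p\chp(f_t)\le M$, the coefficient is bounded. Gronwall's inequality applied to $|m|$ then gives $|m(t)|\le \e^{Mt}|m(0)|=0$, so $m(t)=0$ for all $t\ge0$. This is the claim.

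The only delicate point is justifying the differentiation under the pairing. It is a bounded linear functional applied to an absolutely continuous $L^q$-valued curve, and $L^q$ has the Radon--Nikod\'ym property, so the derivative exists a.e.\ and commutes with the pairing. Apart from this the argument is routine. Note also that the a.e.\ choice of subgradient $\xi_t$ is irrelevant, since every subgradient annihilates $\mathbf{1}$.
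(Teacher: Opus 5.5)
Your proof is correct and follows the paper's argument. Both pair the inclusion \eqref{eq:mirror_flow_derivative} with the constant function $\mathbf{1}$, obtain the linear ODE $h_t' = p\chp(f_t)\,h_t$ with $h_0 = 0$, and conclude by Gr\"onwall; the paper works with $h_t^2$ and the bound $p\chp(f_t)\le p\chp(f_0)$ where you use $|m|$ and the bound $M$, and you also make explicit the step the paper leaves implicit, namely that every $\xi\in\partial\chp(f)$ satisfies $\int_X \xi\,\dd\m = 0$ by the translation invariance in \cref{prop:properties_of_chp}(ii).
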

\begin{proof}
    Let $h_t \coloneqq \int_XJ_p(f_t) \dd \m$.
    By \eqref{eq:mirror_flow_derivative} we have
    \[
        \dt h_t = p\chp(f_t)h_t,\ \Leb\text{-a.e. } t > 0
    \]
    Thus we get
    \[
        \dt h_t^2 \le 2p\chp(f_0) h_t^2,\ \Leb\text{-a.e. } t > 0.
    \]
    Gr\"onwall's inequality gives the conclusion.
\end{proof}

Combining the above proposition with \cref{lem:stay_in_sphere} gives the following result.
\begin{corollary}
    We have $\omega(f_t) \subset \sphere_0$.
\end{corollary}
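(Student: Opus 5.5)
The claim is that $\omega(f_t) \subset \sphere_0$. I would show that every $f_\ast \in \omega(f_t)$ meets each of the three conditions defining $\sphere_0$, namely $\lpnorm{f_\ast}=1$, $\chp(f_\ast)<+\infty$, and $\int_X |f_\ast|^{p-2}f_\ast \dd\m = 0$. Fix $f_\ast \in \omega(f_t)$ and a sequence $t_n \to +\infty$ with $\lpnorm{f_{t_n} - f_\ast} \to 0$.

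First, the initial datum satisfies $f_0 \in \sphere_0 \subset \sphere$ and $\chp(f_0) < \frac{M}{p}$, because $M = p(\chp(f_0)+1)$. So \cref{lem:stay_in_sphere} applies and gives $f_t \in \sphere$ for every $t$. Strong convergence in $\Lp$ then yields $\lpnorm{f_\ast} = \lim_n \lpnorm{f_{t_n}} = 1$, so $f_\ast \in \sphere$.

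Second, the same lemma says $t \mapsto \chp(f_t)$ is non-increasing, so $\chp(f_{t_n}) \le \chp(f_0)$ for all $n$. Lower semicontinuity of $\chp$ (\cref{prop:properties_of_chp}) gives $\chp(f_\ast) \le \liminf_n \chp(f_{t_n}) \le \chp(f_0) < +\infty$, so $f_\ast \in \domain{\chp}$.

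Third, the preceding proposition gives $\int_X J_p(f_{t_n}) \dd\m = 0$ for every $n$. The $p$-duality map $J_p(f) = |f|^{p-2}f$ (recall $\lpnorm{f_{t}}=1$, so no normalising factor appears) is continuous from $\Lp$ to $L^q(X,\m)$, so $J_p(f_{t_n}) \to J_p(f_\ast)$ in $L^q$. Since $\m(X)<+\infty$, the constant function $1$ lies in $L^p$, so integration against it is a continuous linear functional on $L^q$. Passing to the limit therefore gives $\int_X |f_\ast|^{p-2}f_\ast \dd\m = 0$.

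Together these three facts give $f_\ast \in \sphere_0$. The only point needing any care is the continuity of $f \mapsto |f|^{p-2}f$ from $L^p$ to $L^q$. This can be checked directly: a.e. convergence along a subsequence combined with the convergence of norms $\int_X |J_p(f_{t_n})|^q \dd\m = \lpnorm{f_{t_n}}^p \to \lpnorm{f_\ast}^p$ gives $L^q$ convergence, and uniqueness of the limit upgrades this to the whole sequence. Alternatively, one can note that $J_p = \nabla\Psi$ and $\Psi$ is $C^1$ by standing assumption.
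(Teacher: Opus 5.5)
Your proof is correct and follows the same route as the paper. The paper simply combines the preceding proposition ($\int_X J_p(f_t)\,\dd\m = 0$ for every $t$) with \cref{lem:stay_in_sphere}; you make the limit passage explicit through convergence of norms, lower semicontinuity of $\chp$ together with monotonicity, and continuity of $J_p=\nabla\Psi$ from $\Lp$ to $L^q$.
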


To show that $K_{\lambda_\ast}$ contains $\omega(f_t)$, we prepare the following lemma.
\begin{lemma}\label{lem:energy_convergence}
    Let $M' < M$ and $\{g_t^n\}_{n \ge 1} \subset \Gamma^{M'}$, where $\Gamma^{M'}$ is given by \eqref{eq:gamma_m_prime}.
    Then there exist a (relabeled) subsequence $\{g_t^n\}_{n \ge 1}$ and a $\Psi$-mirror flow $g_t$ for $\Phi$ such that $g_t^n \to g_t$ in $\Lp$ for each $t \ge 0$ and \[
        \chp(g_t^n) \to \chp(g_t),\ \Leb\text{-a.e. } t  > 0.
    \]
\end{lemma}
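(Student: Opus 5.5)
First, \cref{lem:compactness_mirror_flows} (with $\Gamma^{M'}\subset\Gamma^{M}$) gives a relabeled subsequence and a $\Psi$-mirror flow $g_t\in\Gamma^{M'}$ with $g^n_t\to g_t$ in $\Lp$ for every $t\ge0$, indeed locally uniformly in $t$. Lower semicontinuity of $\chp$ then yields $\chp(g_t)\le\liminf_n\chp(g^n_t)$ for every $t$. What remains is the reverse inequality $\limsup_n\chp(g^n_t)\le\chp(g_t)$ for $\Leb$-a.e.\ $t$. The model for this is the proof of \cref{lem:palais-smale}: if the slopes are bounded, then the global formula \eqref{eq:global_formula} gives
\[
\chp(g^n_t)\le \chp(g_t)+\slope{\Phi}(g^n_t)\lpnorm{g^n_t-g_t}+L\,\bregman{g_t}{g^n_t}.
\]
Here $\Phi(g^n_t)-\Phi(g_t)=\chp(g^n_t)-\chp(g_t)$, because every curve stays on $\sphere$ by \cref{lem:stay_in_sphere}, and $\lambda=-L$. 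So it is enough to control $\slope{\Phi}(g^n_t)$ along a subsequence at a.e.\ time.

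To get this control I will use the energy dissipation inequality \cref{thm:energy_dissipation} together with \cref{rem:lp_bregman}. Since $\norm{J_p(g^n_t)}_{L^q}=1$, $g(2)$ is a fixed positive constant, and integrating gives
\[
C_p\int_0^{\infty}\slope{\Phi}^2(g^n_t)\,\dd t\le \Phi(g^n_0)-\inf\Phi\le M'+L.
\]
Hence $\{\slope{\Phi}(g^n_\cdot)\}_n$ is bounded in $L^2(0,\infty)$. Fatou's lemma then gives
\[
\int_0^\infty\liminf_n\slope{\Phi}^2(g^n_t)\,\dd t<\infty,
\]
so $T_t:=\liminf_n\slope{\Phi}(g^n_t)<\infty$ for a.e.\ $t$. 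At such a time $t$, for any subsequence I can pass to a further subsequence along which $\slope{\Phi}(g^n_t)\le T_t+1$. Applied to a subsequence realizing $\limsup_n\chp(g^n_t)$, this shows that limsup is at most $\chp(g_t)$, since $\lpnorm{g^n_t-g_t}\to0$ and $\bregman{g_t}{g^n_t}\to0$ (the latter by continuity of $\Psi$ and $J_p$, $J_p$ being norm-to-norm continuous on $L^p$).

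The main obstacle is that this subsequence depends on $t$, whereas the statement asks for one subsequence that works for a.e.\ $t$. I see two ways around it.

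The first is to avoid subsequences altogether. The inequality above, combined with the lower semicontinuity bound, gives for a.e.\ $t$ and every $n$
\[
\bigl|\chp(g^n_t)-\chp(g_t)\bigr|\le \bigl(\chp(g_t)-\chp(g^n_t)\bigr)^+ + \slope{\Phi}(g^n_t)\,\eps_n(t),\qquad \eps_n(t):=\lpnorm{g^n_t-g_t}+L\,\bregman{g_t}{g^n_t}.
\]
Since $\eps_n\to0$ uniformly on $[0,T]$ and $\slope{\Phi}(g^n_\cdot)$ is bounded in $L^2$, the second term tends to $0$ in $L^1(0,T)$. The first term is bounded and tends to $0$ pointwise, because $\liminf_n\chp(g^n_t)\ge\chp(g_t)$. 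Therefore $\chp(g^n_\cdot)\to\chp(g_\cdot)$ in $L^1(0,T)$. A diagonal argument over $T\in\N$ then extracts a single subsequence converging $\Leb$-a.e.\ on $(0,\infty)$.

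The second, a fallback, would be to use the monotonicity of $t\mapsto\chp(g^n_t)$ from \cref{lem:stay_in_sphere} together with Helly's selection theorem to obtain one subsequence converging at every $t$, and then identify the limit a.e.\ using the argument of the previous paragraph.
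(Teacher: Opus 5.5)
Your final argument (the $L^1_{\mathrm{loc}}$ route) is correct, but it is not the paper's route.

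\textbf{The paper's route.} The paper bounds no slopes here. It applies the exponential EVI \eqref{eq:evi_exponential_integral_form} to $g^n$ on $[s,t]$, using the limit point $x=g_t$ as test point. Letting $n\to\infty$ gives $E_{-L}(t-s)\limsup_n\chp(g^n_t)\le E_{-L}(t-s)\chp(g_t)+\bregman{g_t}{g_s}$. It then sends $s\nearrow t$ at differentiability points of $t\mapsto J_p(g_t)$, using $\bregman{g_t}{g_s}\le C_p\lqnorm{J_p(g_s)-J_p(g_t)}^q$ with $q>1$. This yields the $\limsup$ inequality pointwise, along the subsequence from \cref{lem:compactness_mirror_flows} itself, with an explicit exceptional set.

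\textbf{Your route.} You combine the Palais--Smale-type inequality from the global formula \eqref{eq:global_formula} with the uniform $L^2(0,\infty)$ bound on $\slope{\Phi}(g^n_\cdot)$ from \cref{thm:energy_dissipation}. This avoids the $q$-power Bregman estimate and any differentiability of the limit curve. It only gives $L^1_{\mathrm{loc}}$ convergence of the energies, from which you extract a.e.\ convergence. The diagonal extraction is in fact unnecessary. All maps $t\mapsto\chp(g^n_t)$ and $t\mapsto\chp(g_t)$ are nonincreasing, and $L^1_{\mathrm{loc}}$ convergence of monotone functions to a monotone limit gives convergence at every continuity point of the limit.

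\textbf{Two details to fix.}

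First, in your displayed estimate the Bregman term should not be multiplied by the slope. The formula \eqref{eq:global_formula} gives $(\chp(g^n_t)-\chp(g_t))^+\le\slope{\Phi}(g^n_t)\lpnorm{g^n_t-g_t}+L\,\bregman{g_t}{g^n_t}$. So replace $\slope{\Phi}(g^n_t)\eps_n(t)$ by $(1+\slope{\Phi}(g^n_t))\eps_n(t)$; nothing else changes. On $\sphere$ one even has $\bregman{g_t}{g^n_t}=-\inner{J_p(g^n_t)}{g_t-g^n_t}\le\lpnorm{g_t-g^n_t}$.

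Second, the Fatou paragraph is wrong as stated. The bound $\liminf_n\slope{\Phi}(g^n_t)<\infty$ does not imply that every subsequence has a further subsequence with bounded slopes. In particular, a subsequence realizing $\limsup_n\chp(g^n_t)$ may have slopes tending to $+\infty$, so that paragraph gives no control of the $\limsup$. Your $L^1_{\mathrm{loc}}$ argument does not use this claim. In the Helly fallback the flaw is harmless, because there the whole sequence converges at $t$. Fatou along the Helly subsequence then supplies one bounded-slope subsequence, and that suffices to identify the limit as $\chp(g_t)$.
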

\begin{proof}
    By \cref{lem:stay_in_sphere,lem:compactness_mirror_flows}, after passing to a subsequence and relabeling, there exists a $\Psi$-mirror flow $g_t$ such that, for each $t \ge 0$, $g_t^n \to g_t$ in $\Lp$ and $g_t \in \sphere_0$.
    Thus, by the definition of $\Phi$, we have $\Phi(g_t) = \chp(g_t) - \frac{L}{p}$ for each $t \ge 0$.
    From \eqref{eq:evi_exponential_integral_form}, for each $0 \le s < t$ and $n \ge 1$ we have
    \[
        \e^{-L(t-s)}\bregman{g_t}{g_t^n} - \bregman{g_t}{g_s^n} \le E_{-L}(t-s)(\chp(g_t) - \chp(g_t^n)).
    \]
    By taking $\limsup$ with respect to $n \to +\infty$, we obtain
    \begin{equation}\label{eq:energy_conv_inequality}
        E_{-L}(t-s) \limsup_{n \to +\infty} \chp(g_t^n) \le E_{-L}(t-s) \chp(g_t) + \bregman{g_t}{g_s}
    \end{equation}
    Since $p\ge2$, there exists a constant $C_p > 0$ such that for each $0 \le s < t$
    \[
        \bregman{g_t}{g_s} = \bregman{J_p(g_s)}{J_p(g_t)}\le C_p \lqnorm{J_p(g_s) - J_p(g_t)}^q,
    \]
    for example, see~\cite[Lemma~10.2.1]{AGS}.
    Since $t \mapsto J_p(g_t)$ is $\Leb$-a.e. differentiable in $[0, +\infty)$, dividing both sides of \eqref{eq:energy_conv_inequality} by $t - s$ and taking the limit as $s \nearrow t$ yields
    \[
        \limsup_{n \to +\infty} \chp(g_t^n) \le \chp(g_t),\ \Leb\text{-a.e. } t > 0.
    \]
    The lower semicontinuity of $\chp$ gives the conclusion.
\end{proof}

Having established all the preliminary results needed to prove \cref{thm:approximation_eigenvalue_introduction}, we define the simplicity and isolation of eigenvalues used in its statement, and then proceed to the proof.

\begin{definition}\label{def:isolated}
    An eigenvalue $\lambda$ is said to be \textit{isolated} if there exists $\delta > 0$ such that there are no eigenvalues in the interval $(\lambda, \lambda+\delta]$.
\end{definition}

\begin{definition}
    An eigenvalue $\lambda$ is said to be \textit{simple} if any two eigenfunctions $f, g \in \sobolev\setminus\{0\}$ for $\lambda$ are linearly dependent.
\end{definition}

\begin{proof}[Proof of \cref{thm:approximation_eigenvalue_introduction}]
    It has already been shown that $\lambda_\ast$ exists and is finite.
    
    First, we show that $\omega(f_t) \subset \sphere_0 \cap K_{\lambda_\ast}$.
    This also implies that $\widetilde{\lambda}_{1,p} \le \lambda_\ast$.
    Let $t_n >0$ and $f_\ast \in \Lp$ satisfy $t_n\to +\infty$ and $f_{t_n} \to f_\ast$.
    It is clear that, for each $n \ge 1$, the curve $g_t^n \coloneqq f_{t_n + t}$ is the $\Psi$-mirror flow for $\Phi$ starting from $f_{t_n}$ with $\sup_{n\ge1}\chp(g_0^n) \le \chp(f_0)$.
    By \cref{lem:energy_convergence}, after passing to a subsequence and relabeling, there exists a $\Psi$-mirror flow $g_t$ such that $g_t^n \to g_t$ for each $t \ge 0$, in particular $g_0 = f_\ast$, and
    \[
        \chp(g_t^n) \to \chp(g_t),\ \Leb\text{-a.e. } t > 0.
    \]
    Thus, from \eqref{eq:energy_limit} we have
    \[
        \chp(g_t) = \frac{\lambda_\ast}{p},\ \Leb\text{-a.e. } t > 0.
    \]
    Since the map $t \mapsto \chp(g_t)$ is non-increasing and lower semicontinuous, the above equality holds at every point $t \ge 0$; in particular, $\chp(g_0) = \chp(f_\ast) = \frac{\lambda_\ast}{p}$.
    Noting that $\Phi(g_t) = \chp(g_t) - \frac{L}{p}$ for each $t \ge 0$, by \cref{thm:energy_dissipation}, we have
    \[
        \slope{\Phi}(g_t) = 0,\ \Leb\text{-a.e. } t > 0.
    \]
    The lower semicontinuity of $t \mapsto \slope{\Phi}(g_t)$ implies that $\slope{\Phi}(f_\ast) =0$.
    This proves $\omega(f_t) \subset K_{\lambda_\ast} \cap \sphere_0$.

    By taking a minimizing sequence $\{f_0^n\}_{n \ge 1} \subset \sphere_0$ with $\widetilde{\lambda}_{1,p} \le p\chp(f_0^n) \le \widetilde{\lambda}_{1,p} + \frac{1}{n}$, the compact embedding assumption \cref{ass:p_condition_and_rellich_kondrachov} and the lower semicontinuity of $\slope{\Phi}$ show that $\widetilde{\lambda}_{1,p}$ is an eigenvalue of $\plap$.
    Finally, if $p\chp(f_0) \le \widetilde{\lambda}_{1,p} + \delta$ for $\delta$ as in \cref{def:isolated}, we have $\lambda_\ast = \widetilde{\lambda}_{1,p}$.
    Further, since $\omega(f_t)$ is connected, the simplicity implies that $\omega(f_t)$ is a singleton.
    This completes the proof.
\end{proof}

\phantomsection
\addcontentsline{toc}{section}{\texorpdfstring{\textbf{Acknowledgements}}{Acknowledgements}}
\noindent\textbf{Acknowledgements}\quad
I would like to thank my supervisor, Professor Shouhei Honda, for bringing the work~\cite{AmbrosioHondaPortegies2018Continuity} to my attention and for valuable comments.
I also acknowledge support from JSPS KAKENHI Grant Number JP25KJ0966.

\printbibliography
\end{document}